\newtheorem{theorem}{Theorem}[section]
\newtheorem{lemma}[theorem]{Lemma}
\theoremstyle{definition}
\newtheorem{definition}[theorem]{Definition}
\theoremstyle{remark}
\newtheorem{remark}[theorem]{Remark}
\theoremstyle{notation}
\newtheorem{notation}[theorem]{Notation}
\numberwithin{equation}{section}
\theoremstyle{corollary}
\newtheorem{corollary}[theorem]{Corollary}
\newcommand{\Map}{\mathbf{Map}}
\newcommand{\map}{\mathbf{map}}
\newcommand{\HOM}{\mathbf{HOM}}
\newcommand{\homs}{\mathbf{hom}}
\newcommand{\sSet}{\mathbf{sSet}}
\newcommand{\Spn}{\mathbf{Sp}^{\mathbb{N}}}
\newcommand{\Cat}{\mathbf{Cat}}
\newcommand{\C}{\mathbf{C}}
\newcommand{\D}{\mathbf{D}}
\newcommand{\A}{\mathbf{A}}
\newcommand{\B}{\mathbf{B}}
\newcommand{\M}{\mathbf{M}}
\newcommand{\sCat}{\mathbf{sCat}}
\newcommand{\K}{\mathbb{K}}
\newcommand{\N}{\mathrm{N}_{\bullet}}
\newcommand{\iso}{\mathrm{iso}}
\newcommand{\Ho}{\mathrm{Ho}}
\newcommand{\colim}{\mathrm{colim}}
\newcommand{\Ob}{\mathrm{Ob}}
\newcommand{\diag}{\mathrm{diag}}
\begin{document}

\title{Stabilization of the category of simplicial objects in CAT.}

%    Information for first author
\author{Amrani Ilias}
%    Address of record for the research reported here
\address{Department of Mathematics\\ Masaryk University\\ Czech Republic}
%    Current address
%\curraddr{Department of Mathematics and Statistics,
%Case Western Reserve University, Cleveland, Ohio 43403}
\email{ilias.amranifedotov@gmail.com}
\email{amrani@math.muni.cz}
   % \thanks %will become a 1st page footnote.
\thanks{Supported by the project CZ.1.07/2.3.00/20.0003
of the Operational Programme Education for Competitiveness of the Ministry
of Education, Youth and Sports of the Czech Republic.
}

%    Information for second author
%\author{Author Two}
%\address{Mathematical Research Section, School of Mathematical Sciences,
%Australian National University, Canberra ACT 2601, Australia}
%\email{two@maths.univ.edu.au}
\thanks{}

%    General info
\subjclass[2000]{Primary 18, 55}

\date{May 25, 2012 }

%\dedicatory{This paper is dedicated to our advisors.}

\keywords{Stable homotopy theory, model categories, algebraic K-theory}

\begin{abstract}
In this article, we define two equivalent new model structures on $ \sCat$ the category of simplicial objects in $\Cat$. Then we construct the corresponding stable model category of spectra $\Spn(\sCat)$ and make some links with the algebraic $\K$-theory via the mapping space. 
\end{abstract}

\maketitle
%------------------------------------------------------------------------------
% Introduction
%------------------------------------------------------------------------------

%------------------------------------------------------------------------------
% Part 1
%------------------------------------------------------------------------------
\section*{Introduction and main results}

We start by introducing two Quillen equivalent new model structure on $\sCat$ (i.e., $[\Delta^{op},\Cat]$), the category of simplicial objects in $\Cat$, we will often call them \textit{simplicial categories}. It is a \textbf{discrete} version of the diagonal model structure defined in [\cite{amrani2011grothendieck}, theorem 1.2], \textbf{but we will go further in our investigations, mainly the stabilization process}, as we will see later. Equipped with these new model structures, the mapping space $\map$ is closely related to the algebraic $\K$-theory. In order to get a full analogy, we construct the stable category of spectra $ \Spn(\sCat)$ following ideas of \cite{hovey2001}. Then the mapping space $\map_{\Spn(\sCat)}(\Sigma^{\infty}S^{0},\C_{\bullet}^{\bullet})$ is an infinity loop space, where $\C_{\bullet}^{\bullet}$ is a fibrant object in the model category $ \Spn(\sCat)$. Roughly speaking 
$\C_{\bullet}^{\bullet}$ is a \textit{categorical} $\Omega$-spectra. \\ 
We recall, that the $\K$-theory of an exact category or more generally a Waldhausen category with isomorphisms produces an $\Omega$-spectra. Let's $\C$ be a Waldhausen category, then the Waldhausen "suspension" 
$\mathcal{S}_{\bullet}\C$,  is a simplicial category and the $\K$-theory spectra is given by the following sequence \cite{waldhausen1983}:
$$\{\Omega\diag\N~\iso~ \mathcal{S}_{\bullet}\C, \diag\N~\iso~ \mathcal{S}_{\bullet}\C,\dots  ,\diag\N~\iso~ \mathcal{S}^{n}_{\bullet}\C,\dots\}$$
which is an $\Omega$-spectra i.e., a fibrant object in the stable model category $\Spn(\sSet_{\ast})$.\\
In section \ref{section1}, we construct the diagonal model structure on $\sCat$. We prove the following theorem:\\
\textbf{Theorem A: }(diagonal model structure \ref{structure})
\textit{The category of simplicial categories $\sCat$ is a cofibrantly generated model category where
}
\begin{enumerate}
\item \textit{a morphism $f:\C_{\bullet}\rightarrow\D_{\bullet}$ 
is a weak equivalence (resp. fibration) if and only if  $\diag\N\iso(f)$ is a weak equivalence (resp. fibration) in $\sSet$.}
\item \textit{The generating (acyclic)  cofibrations in $\sCat$ are given by the image of generating (acyclic) cofibration in $\sSet$ via the fonctor $\pi_{\bullet} d_{\ast}$. }
\end{enumerate}

In section \ref{section2}, we prove that the new model structure on $\sCat$ is cellular and proper.\\\\
\textbf{Theorem B: }(additional properties\ref{cellsacat},\ref{leftproper})
\textit{The diagonal model structure on $\sCat$ is left proper and cellular.}\\

 In section \ref{section3}, we equip the category of simplicial categories with a new model category equivalent to the diagonal one (same weak equivalences), \textbf{but having the advantage to be well tensored and cotensored with respect to the model category $\sSet$}. We establish the following theorem:\\

\textbf{Theorem C: }($\overline{W}$-model structure \ref{newstructure}) \textit{There exists a cofibrantly generated model structure on $\sCat$ induced by the adjunction }
$$\xymatrix{
\sSet \ar@<1ex>[r]^-{\pi_{\bullet}~Dec} & \sCat \ar@<1ex>[l]^-{\overline{W}\N\iso} 
}$$
\textit{Moreover, the $\overline{W}$-model structure on $\sCat$ is Quillen equivalent to the diagonal model structure, left proper and cellular.}\\

In section \ref{section4}, construct and explicit suspension functor and a loop functor in the pointed $\overline{W}$-model category $\sCat$.\\\\
\textbf{Theorem D:} (compatible (co)tensorization) \ref{S-Quillen}
\textit{
If $X_{\bullet}$ is a pointed simplicial set, then the functor
$$-\land X_{\bullet} :~ \sCat_{\ast}\rightarrow \sCat_{\ast}$$
 is a left Quillen functor, where $\sCat$ is equipped with $\overline{W}$-model structure. 
 Moreover, the functor $-\land X_{\bullet} $ has a right Quillen adjoint. 
}\\\\
In the final section \ref{section5}, we prove our main theorem, roughly speaking we construct a the stable model category of spectra $\Spn(\sCat_{\ast})$ and relate the mapping space $\map_{\Spn(\sCat_{\ast})}$
to the algebraic $\K$-theory:\\\\
\textbf{Theorem E: }(stabilization \ref{spectrificationscat})
\textit{
There is a cofibrantly generated stable model category structure on $\Spn(\sCat_{\ast},\Sigma)$.}\\\\

As consequence of the last theorem is that any fibrant object $\D^{\bullet}_{\bullet}$ in $\Spn(\sCat_{\ast},\Sigma)$ has the property that $\Omega\diag\N\iso \D_{\bullet}^{n+1}$ is equivalent to $\diag\N\iso \D_{\bullet}^{n}$, It means that $\D_{\bullet}^{n+1}$ looks like $\mathcal{S}_{\bullet}\D_{\bullet}^{n}$, the Whaldhausen suspension of $\D_{\bullet}^{n}$. We express the right formulation in the following corollary:\\

\textbf{Corollary F: }(relation to algebraic $\K$-theory \ref{corwaldhausen} )  
\textit{For any fibrant object $\D^{\bullet}_{\bullet}$ in $\Spn(\sCat_{\ast},\Sigma)$, we have the following  isomorphisms in $\Ho(\sSet)$: }
 \begin{enumerate}
 \item $\map_{\sCat_{\ast}}(\Sigma S^{0},\D^{n+1}_{\bullet})\simeq\map_{\sCat_{\ast}}(S^{0},\Omega\D^{n+1}_{\bullet})\simeq\diag\N\iso\Omega\D^{n+1}_{\bullet}.$

\item  $\map_{\sCat_{\ast}}(\Sigma S^{0},\D^{n+1}_{\bullet})\simeq \Omega \diag\N\iso\D^{n+1}_{\bullet}.$

\item  $\map_{\sCat_{\ast}}(\Sigma S^{0},\D^{n+1}_{\bullet})\simeq\map_{\sCat_{\ast}}(S^0,\D^{n}_{\bullet})\simeq\diag\N\iso\D^{n}_{\bullet}.$\\\\
 \end{enumerate}

The appendix is about some easy facts about small objects in different categories. We made an effort to treat all technical details in order to establish our results without ambiguity.\\\\

\textbf{Acknowledgment: } I would like to express my gratitude to my former supervisor, Professor  Kathryn Hess for all here corrections and suggestions.\\

\section{Diagonal model structures on $\sCat$}\label{section1}

 \subsection{framework}
 \begin{notation}
 \begin{enumerate}
 \item Objects of the category $\Cat$ are denoted by $\A, \B, \C,\dots$
 \item  Objects of the category $\sSet$ are denoted by $X_{\bullet},Y_{\bullet},Z_{\bullet}\dots$ or simply by  $X,Y,Z\dots$ if there is no confusion. 
 \item Objects of $\sCat$ will be denoted by $\A_{\bullet},\B_{\bullet},\C_{\bullet}\dots$ 
 \item  Generic categories by $\mathcal{A},\mathcal{B},\mathcal{C}\dots$ 
 \item Finally objects of the category of (non symmetric) spectra $\Spn({\sCat_{\ast}})$ are denoted by $\A_{\bullet}^{\bullet},\B_{\bullet}^{\bullet},\C_{\bullet}^{\bullet}\dots$
 \end{enumerate}
 \end{notation}
 In all what will follow, we assume that the category of small $\Cat$ is equipped with Joyal-Therny model structure. Roughly speaking, the weak equivalences are equivalences of categories, the cofibrations are functors injective on the set of objects and the fibrations are Grothendieck iso-fibrations. This model structure is in fact simplicial monoidal close cofibrantly generated model stucture, where all object are fibrant and cofibrant and consequently proper but not cellular.\\
 We use the standard Quillen model structure on the category of simplicial sets $\sSet$. The weak equivalences are morphisms which induce isomorphisms on homotopy groups, cofibrantions are monomorphisms and fibrations are Kan fibrations. Equipped with this model structure, the category of simplicial sets is simplicial monoidal closed cofibrantly generated model category, where all objects are cofibrant and Kan complexes are the fibrant objects. This model structure is proper and cellular. \\
In order to construct a model structure on the category of simplicial categories $\sCat=[\Delta^{op}, \Cat]$ we use the fundamental lemma of transferring model structure via an adjunction. 
 
%lemme fondamental de transfère de structure modèle par adjonction
  \begin{lemma}\label{lem1}[\cite{worytkiewicz2007model}, proposition 3.4.1]
Let $\mathcal{M}$ be a cofibrantly generated model structure and let  
$$\xymatrix{
\mathcal{M} \ar@<1ex>[r]^-{G} & \mathcal{C}\ar@<1ex>[l]^-{F}
}$$ be an adjunction and define a class of weak equivalences and fibrations as follow:
\begin{enumerate}
\item $\mathrm{WE}$ the class of  morphisms in $\mathcal{C}$ such that there image under $F$ is a weak equivalence in $\mathcal{M}$.
\item $\mathrm{Fib}$ the class of  morphisms in $\mathcal{C}$ such that there image under $F$ is a fibration in $\mathcal{M}$.
\end{enumerate}
Suppose that the following condition are verified:
\begin{enumerate}
\item The domains of $G (i)$ are small relatively to $G (\mathrm{I} )$ for all  $i \in \mathrm{I} $ and the domains of  $G(j)$ are small relatively to 
$G(\mathrm{J} )$ for all $ j \in \mathrm{J}$.
\item The functor $F$ commutes with directed  colimits i.e.,  
$$F\colim (\lambda \rightarrow \mathcal{C})= \colim F( \lambda\rightarrow\mathcal{C}).$$
\item Any transfinite composition of weak equivalences in $\mathcal{M}$ is again a weak equivalence.
\item The pushout of $G(j)$ along any morphism $f$ in  $\mathcal{C}$ is an element of $\mathrm{WE}.$
\end{enumerate}
Then $\mathcal{C}$ form is model category, where weak equivalences  (resp. fibrations) are  $\mathrm{WE}$  $(resp. \mathrm{Fib})$, moreover $\mathcal{C}$ is cofibrantly generated, where the generating cofibration
are $G(\mathrm{I})$ and generating trivial cofibrations are $G(\mathrm{J})$.
 \end{lemma}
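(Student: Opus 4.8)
The plan is to deduce the statement from the recognition theorem for cofibrantly generated model categories (e.g. Hirschhorn, \emph{Model Categories and Their Localizations}, Theorem~11.3.1, or Hovey, Theorem~2.1.19), applied with weak equivalences $\mathrm{WE}$, generating cofibrations $G(\mathrm{I})$ and generating trivial cofibrations $G(\mathrm{J})$; I assume throughout, as is implicit in the statement, that $\mathcal{C}$ is bicomplete. For a set of maps $S$ write $\mathrm{inj}(S)$ for the maps with the right lifting property against $S$, $\mathrm{cof}(S)$ for those with the left lifting property against $\mathrm{inj}(S)$, and $\mathrm{cell}(S)$ for the relative $S$-cell complexes. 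Two preliminary observations carry most of the weight. First, since $\mathrm{WE}=F^{-1}(\text{weak equivalences of }\mathcal{M})$ and $F$ is a functor, $\mathrm{WE}$ automatically has the two-out-of-three property and is closed under retracts, because the weak equivalences of $\mathcal{M}$ are. Second, the adjunction $G\dashv F$ gives, for a map $p$ of $\mathcal{C}$, that $p\in\mathrm{inj}(G(\mathrm{I}))$ (resp. $p\in\mathrm{inj}(G(\mathrm{J}))$) if and only if $F(p)\in\mathrm{inj}(\mathrm{I})$ (resp. $F(p)\in\mathrm{inj}(\mathrm{J})$), i.e. if and only if $F(p)$ is a trivial fibration (resp. a fibration) of $\mathcal{M}$; hence $\mathrm{inj}(G(\mathrm{J}))=\mathrm{Fib}$, and since in $\mathcal{M}$ one has $\mathrm{inj}(\mathrm{I})=\mathrm{inj}(\mathrm{J})\cap(\text{weak equivalences})$, applying $F^{-1}$ yields
$$\mathrm{inj}(G(\mathrm{I}))=\mathrm{Fib}\cap\mathrm{WE}=\mathrm{inj}(G(\mathrm{J}))\cap\mathrm{WE}.$$

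Next I would read off the remaining hypotheses of the recognition theorem. The smallness conditions are exactly hypothesis (1): the domains of the maps in $G(\mathrm{I})$ are small relative to $G(\mathrm{I})$, hence a fortiori relative to $\mathrm{cell}(G(\mathrm{I}))$, and likewise for $G(\mathrm{J})$. The displayed identity gives simultaneously the inclusion $\mathrm{inj}(G(\mathrm{I}))\subseteq\mathrm{WE}\cap\mathrm{inj}(G(\mathrm{J}))$ and the acyclicity inclusion $\mathrm{WE}\cap\mathrm{inj}(G(\mathrm{J}))\subseteq\mathrm{inj}(G(\mathrm{I}))$ required by the theorem; it also gives $\mathrm{inj}(G(\mathrm{I}))\subseteq\mathrm{inj}(G(\mathrm{J}))$, and reversing lifting properties turns this into $\mathrm{cof}(G(\mathrm{J}))\subseteq\mathrm{cof}(G(\mathrm{I}))$, whence $\mathrm{cell}(G(\mathrm{J}))\subseteq\mathrm{cof}(G(\mathrm{J}))\subseteq\mathrm{cof}(G(\mathrm{I}))$. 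The one condition still to be established is $\mathrm{cell}(G(\mathrm{J}))\subseteq\mathrm{WE}$.

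That last inclusion is the heart of the matter and the only place hypotheses (2), (3), (4) enter. Given a relative $G(\mathrm{J})$-cell complex $f\colon X\to Y$, presented as a transfinite composite of pushouts of coproducts of maps in $G(\mathrm{J})$, I would first refine the presentation: since $G$ preserves coproducts and a pushout of a coproduct of maps can be rebuilt as an iterated transfinite composite of pushouts of the individual maps, $f$ becomes a transfinite composite of a sequence $X=Y_0\to Y_1\to\cdots$ in which each $Y_\beta\to Y_{\beta+1}$ is a pushout of a single $G(j_\beta)$, $j_\beta\in\mathrm{J}$, along some morphism of $\mathcal{C}$. Hypothesis (4) says each such $Y_\beta\to Y_{\beta+1}$ lies in $\mathrm{WE}$, i.e. $F(Y_\beta\to Y_{\beta+1})$ is a weak equivalence of $\mathcal{M}$. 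By hypothesis (2), $F$ carries this sequence to a sequence in $\mathcal{M}$ with the same colimit, so $F(f)$ is the transfinite composite of the maps $F(Y_\beta\to Y_{\beta+1})$, and hypothesis (3) then identifies $F(f)$ as a weak equivalence, giving $f\in\mathrm{WE}$. With this, all hypotheses of the recognition theorem hold, so $\mathcal{C}$ is a cofibrantly generated model category with the asserted weak equivalences, generating cofibrations and generating trivial cofibrations, and its fibrations are $\mathrm{inj}(G(\mathrm{J}))=\mathrm{Fib}$, as claimed.

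I expect the refinement-and-$F$-applied step to be the main obstacle: one must be careful that applying $F$ to a relative $G(\mathrm{J})$-cell complex really produces an honest transfinite composite of weak equivalences of $\mathcal{M}$, and this is precisely what forces hypotheses (2) and (3) into the statement, while hypothesis (4) supplies the irreducible input that a single cell attachment is a weak equivalence. A minor but necessary piece of bookkeeping is to record that $\mathcal{C}$ is bicomplete, since this is needed even to make sense of the conclusion and of the small object argument underlying the recognition theorem.
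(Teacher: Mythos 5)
Your proposal is correct, and since the paper gives no proof of this lemma at all (it is quoted from the cited reference, Prop.~3.4.1 of Worytkiewicz et al.), the right benchmark is that source's standard transfer argument, which is exactly what you reproduce: reduce to the recognition theorem via the adjunction identities $\mathrm{inj}(G(\mathrm{J}))=\mathrm{Fib}$ and $\mathrm{inj}(G(\mathrm{I}))=\mathrm{Fib}\cap\mathrm{WE}$, and prove $\mathrm{cell}(G(\mathrm{J}))\subseteq\mathrm{WE}$ by refining a relative cell complex into single-cell attachments and invoking hypotheses (2)--(4). The only wording to adjust is the smallness step: ``small relative to $G(\mathrm{I})$'' should be read, as in Hirschhorn, as smallness relative to relative $G(\mathrm{I})$-cell complexes (what the small object argument actually needs), so your ``a fortiori'' is either redundant or points in the wrong direction, though nothing else in the argument is affected.
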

 
 %%%%%%%%

\subsection{Diagonal model structure}
\begin{definition}
The  $ \pi:\sSet\rightarrow \Cat$ which associate to a simplicial set $K_{\bullet}$ its fundamental groupoid $\pi(K_{\bullet})$, where the objects are 0-simplicies $K_{0}$, and the generating isomorphisms are  $t:d_{1}x\rightarrow d_{0}x$ for each 1-simplex $t$ in $K_{1}$. The generators are submitted to the relation  $d_{0}l\circ d_{2}l=d_{1}l$ for all 2-simplices  $l$ in $K_{2}$.
\end{definition}
The functor $\pi$ admits a right adjoint $\N\iso$ which associates to $\C$ the nerve of the underlying  groupoid $\iso\C$.

The previous adjunction 
$$\xymatrix{
\sSet \ar@<1ex>[r]^-{ \pi} & \Cat\ar@<1ex>[l]^-{\N\iso}
}$$

extends naturally to an adjunction of bisimplicial sets and the category of simplicial objects in $\Cat$. $$\xymatrix{
\sSet^{2} \ar@<1ex>[r]^-{ \pi_{\bullet}} & \sCat\ar@<1ex>[l]^-{\N\iso}
}$$
where  $\pi_{\bullet}(K_{\bullet ,\bullet})_{n}=\pi(X_{\bullet,n})$ and $\N\iso$ is applied level-wise. 

%l'adjonction fondamentale entre sSet^2 et sCat
Moerdijk proved that there is a model structure on the category of bisimplicial sets Quillen equivalent to the standard model structure on simplicial sets  (\cite{goerss1999}, chapter 4, section 3). This model structure is obtained by the following adjunction:
$$\xymatrix{
\sSet\ar@<1ex>[r]^-{d_{\ast}} & \sSet^{2}.\ar@<1ex>[l]^-{\diag}
}$$

\begin{theorem}\label{structure}
 The category of simplicial categories $\sCat$ is a cofibrantly generated model category where

\begin{enumerate}
\item a morphism $f:\C_{\bullet}\rightarrow\D_{\bullet}$ 
is a weak equivalence (resp. fibration) if and only if  $\diag\N\iso(f)$ is a weak equivalence (resp. fibration) in $\sSet$.
\item The generating (acyclic)  cofibrations in $\sCat$ are given by the image of generating (acyclic) cofibration in $\sSet$ via the fonctor $\pi_{\bullet} d_{\ast}$. 
\end{enumerate}
\end{theorem}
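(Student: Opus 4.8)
The plan is to verify the hypotheses of the transfer lemma (Lemma \ref{lem1}) for the composite adjunction
$$\xymatrix{
\sSet \ar@<1ex>[r]^-{d_{\ast}} & \sSet^{2} \ar@<1ex>[r]^-{\pi_{\bullet}} & \sCat, \ar@/^1.5pc/[ll]^-{\diag\N\iso}
}$$
so that $G = \pi_{\bullet}d_{\ast}$ and $F = \diag\N\iso$, with $\mathcal{M} = \sSet$ in the standard Quillen model structure and $\mathcal{C} = \sCat$. Since $F$ is a right adjoint composite, the classes $\mathrm{WE}$ and $\mathrm{Fib}$ of Lemma \ref{lem1} are exactly the ones described in the statement, and the cofibrant generation conclusion gives part (2) for free. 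So everything reduces to checking the four numbered conditions of Lemma \ref{lem1}.

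First I would dispatch the two colimit/smallness conditions (conditions (1) and (2)). Condition (2) — that $F = \diag\N\iso$ commutes with directed (indeed filtered) colimits — follows because $\diag$ and the level-wise nerve are right adjoints that nonetheless preserve filtered colimits (the nerve of a category is built out of finite tuples of composable morphisms, and $\iso$ is computed object/morphism-wise), so filtered colimits in $\Cat$ and $\sCat$ are computed "set-theoretically"; I would point to the appendix, which the introduction says collects exactly these facts about small objects and filtered colimits in $\Cat$, $\sSet$, $\sSet^{2}$, $\sCat$. Condition (1), smallness of the domains of $G(i)$ and $G(j)$ relative to the respective cells, follows from the same appendix material: the generators $\I, \J$ of $\sSet$ have finite (hence small) domains, $d_{\ast}$ sends them to finitely-presentable bisimplicial sets, and $\pi_{\bullet}$ applied level-wise produces simplicial categories with finitely many objects and finitely generated hom-sets in each degree, which are small. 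Condition (3), that transfinite composites of weak equivalences of simplicial sets are weak equivalences, is a standard and well-known property of the Quillen model structure (weak equivalences of simplicial sets are closed under filtered colimits), so I would simply cite it.

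The crux is condition (4): the pushout in $\sCat$ of a map $G(j) = \pi_{\bullet}d_{\ast}(j)$, for $j \in \J$ a generating acyclic cofibration of $\sSet$, along an arbitrary morphism $f$ must again be a weak equivalence, i.e.\ must become a weak equivalence after applying $\diag\N\iso$. The strategy here is to factor the problem through Moerdijk's model structure on $\sSet^{2}$ (the one induced by $\diag \dashv d_{\ast}$, cited from Goerss–Jardine) and the Joyal–Tierney structure on $\Cat$, for both of which the analogous pushout-of-acyclic-cofibration property is already known. Concretely: $d_{\ast}(j)$ is an acyclic cofibration in Moerdijk's structure on $\sSet^{2}$ (that is the whole point of that transferred structure), and then one must show $\pi_{\bullet}$ carries acyclic cofibrations of $\sSet^{2}$, and their pushouts, to maps that are weak equivalences after $\diag\N\iso$. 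For this I would use that $\pi_{\bullet}$ is computed level-wise by the fundamental-groupoid functor $\pi : \sSet \to \Cat$, that $\pi$ is a left Quillen functor from the Quillen structure on $\sSet$ to the Joyal–Tierney structure on $\Cat$ (it is left adjoint to $\N\iso$, and one checks it sends $\I,\J$ to cofibrations / acyclic cofibrations of $\Cat$), and that $\N\iso$ in turn sends fibrations of $\Cat$ to Kan fibrations and equivalences to weak equivalences; combined with the fact that $\diag$ of a level-wise weak equivalence of bisimplicial sets is a weak equivalence, this shows $\diag\N\iso\,\pi_{\bullet}$ sends level-wise acyclic cofibrations of $\sSet^{2}$ to acyclic cofibrations of $\sSet$. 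The remaining subtlety is that pushouts in $\sCat$ are not computed level-wise as pushouts in $\Cat$ applied degreewise only when the maps involved are nice enough — but since $\sCat = [\Delta^{op},\Cat]$ has colimits computed objectwise, pushouts along $\pi_{\bullet}d_{\ast}(j)$ in $\sCat$ are degreewise pushouts in $\Cat$ along $\pi(d_{\ast}(j)_{n})$, each of which is a pushout along an acyclic cofibration of $\Cat$, hence a weak equivalence there; applying $\N\iso$ degreewise and then $\diag$ finishes it, using again that $\diag$ preserves degreewise weak equivalences.

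I expect condition (4) to be the main obstacle, and within it the genuinely delicate point is controlling what $\N\iso$ does to pushouts: $\N\iso$ is only a right adjoint, so it need not preserve the relevant pushouts, and one cannot simply say "$\diag\N\iso$ of a pushout is a pushout." The way around this is to avoid needing $\N\iso$ to preserve pushouts at all: one checks directly, degreewise, that a pushout in $\Cat$ of an acyclic cofibration (in the Joyal–Tierney sense) along any functor is again an equivalence of categories — this is just left properness of $\Cat$ together with the fact that acyclic cofibrations are stable under pushout in any model category — and then uses that $\N\iso$, as a right Quillen functor from $\Cat$ to $\sSet$, sends equivalences of categories to weak equivalences of simplicial sets, and finally that $\diag$ sends degreewise weak equivalences of bisimplicial sets to weak equivalences. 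So the logical skeleton is: pushout in $\sCat$ $=$ degreewise pushout in $\Cat$ $\Rightarrow$ degreewise an equivalence of categories $\xrightarrow{\N\iso}$ degreewise weak equivalence of simplicial sets $\xrightarrow{\diag}$ weak equivalence of simplicial sets $=$ element of $\mathrm{WE}$. Once the four conditions are in hand, Lemma \ref{lem1} yields the model structure together with the stated description of the generating (acyclic) cofibrations, completing the proof.
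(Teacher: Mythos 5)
Your overall framework is the same as the paper's: apply the transfer lemma (\ref{lem1}) to the composite adjunction with $G=\pi_\bullet d_\ast$ and $F=\diag\N\iso$, and dispatch conditions (1)--(3) via the appendix (smallness, commutation of $\diag\N\iso$ with directed colimits, closure of weak equivalences of $\sSet$ under transfinite composition). That part is fine.

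The gap is in your treatment of condition (4). You reduce to the claim that, for $j\colon\Lambda^n_i\to\Delta^n$ a generating acyclic cofibration of $\sSet$, each degreewise component $\pi\bigl((d_\ast j)_p\bigr)$ is an acyclic cofibration of $\Cat$, so that the degreewise pushouts are along acyclic cofibrations and are therefore degreewise equivalences of categories. That claim is false. In horizontal degree $p$, the map $(d_\ast j)_p$ is
\[
\bigsqcup_{\beta\in(\Lambda^n_i)_p} C_\beta\;\longrightarrow\;\bigsqcup_{\beta\in(\Delta^n)_p}\Delta^n,
\]
and after $\pi$ this is a cofibration of groupoids (injective on objects) but \emph{not} an equivalence, because the source and target have different numbers of connected components: already for $n=1$, $i=0$ the source is $\ast$ while the target in degree $p$ has $p+2$ components. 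So $\pi\bigl((d_\ast j)_p\bigr)$ is a fully faithful inclusion of groupoids that is not essentially surjective, and pushing out along it in $\Cat$ does \emph{not} give an equivalence of categories degreewise. The map $\pi_\bullet d_\ast(j)$ is a weak equivalence in $\sCat$ only after taking $\diag$, not levelwise, and this is precisely what Lemma \ref{cofgenscat} establishes. Correspondingly, your reduction ``degreewise acyclic cofibration of $\Cat$ $\Rightarrow$ degreewise equivalence $\Rightarrow$ $\N\iso$ gives degreewise weak equivalence $\Rightarrow$ $\diag$ gives weak equivalence'' breaks at the first arrow.

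What the paper actually does to bridge this is Lemma \ref{Niso}: it isolates the correct degreewise hypothesis, namely that each $\pi\bigl((d_\ast j)_p\bigr)$ is a \emph{fully faithful inclusion of groupoids}, decomposes the target as $\B=\B^1\sqcup\B^2$ with $\A\to\B^1$ an acyclic cofibration and $\B^2$ the ``extra'' components, and then shows that $\N\iso$ sends the categorical pushout to a homotopy cocartesian square in $\sSet$ (comparing the homotopy pushout $\N\B\sqcup_{\N\A}\N\iso\C$ with $\N\iso\D$). Lemma \ref{lemfond} then applies this degreewise and uses \ref{cofgenscat} plus two-out-of-three at the level of diagonals. (Separately, your aside that acyclic-cofibration-pushouts being acyclic ``is just left properness'' is off: it holds in any model category and has nothing to do with properness; but that is a minor slip compared with the main issue above.)
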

 
\begin{lemma}\label{cofgenscat}
If $j$ is an generating acyclic cofibration in $\sSet^{2}$, then $\pi_{\bullet}(j)$ is an equivalence in $\sCat$.
\end{lemma}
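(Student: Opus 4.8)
The plan is to strip off the two adjunctions $d_*\dashv\diag$ and $\pi_\bullet\dashv\N\iso$ in turn, and at the last step to use that the fundamental‑groupoid functor $\pi$ does not change the homotopy type of a disjoint union of contractible simplicial sets. Recall first that the Moerdijk structure on $\sSet^2$ is transferred along $d_*\dashv\diag$, so that a morphism is a weak equivalence there exactly when $\diag$ carries it to a weak equivalence of simplicial sets, and its generating acyclic cofibrations are the maps $d_*(j')$ with $j'\colon\Lambda^n_k\hookrightarrow\Delta^n$ ($n\ge 1$) a generating acyclic cofibration of $\sSet$. Interpreting ``equivalence in $\sCat$'' as the prospective weak equivalence of Theorem \ref{structure}, the statement to be proved is that $\diag\,\N\iso\,\pi_\bullet\,d_*(j')$ is a weak equivalence of simplicial sets for each such $j'$.

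I would deduce this by a two‑out‑of‑three argument. Applying the unit $\eta\colon\mathrm{id}_{\sSet^2}\Rightarrow\N\iso\pi_\bullet$ of $\pi_\bullet\dashv\N\iso$ to the morphism $d_*(j')$ and then applying $\diag$ yields a commuting square whose top edge is $\diag\,d_*(j')$, whose bottom edge is $\diag\,\N\iso\,\pi_\bullet\,d_*(j')$, and whose vertical edges are $\diag(\eta_{d_*\Lambda^n_k})$ and $\diag(\eta_{d_*\Delta^n})$. The top edge is a weak equivalence because $d_*(j')$ is an acyclic cofibration of the $\sSet^2$‑structure, whose weak equivalences are precisely the maps inverted by $\diag$. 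Hence it is enough to show that, for $X\in\{\Delta^n,\Lambda^n_k\}$, the unit $\eta_{d_*X}\colon d_*X\to\N\iso\,\pi_\bullet\,d_*X$ becomes a weak equivalence after $\diag$.

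I claim $\eta_{d_*X}$ is in fact already a level‑wise weak equivalence of bisimplicial sets, which suffices because $\diag$ sends level‑wise weak equivalences to weak equivalences (\cite{goerss1999}). Fix the second simplicial coordinate $q$; the $q$‑th component of $\eta_{d_*X}$ is exactly the unit $(d_*X)_{\bullet,q}\to\N\iso\,\pi\big((d_*X)_{\bullet,q}\big)$ of the adjunction $\pi\dashv\N\iso$ on $\sSet$. Since $d_*$ is cocontinuous and $d_*\Delta^m$ is the bisimplicial set $([p],[q])\mapsto\Delta^m_p\times\Delta^m_q$, evaluating the colimit that presents $d_*X$ and restricting to the column $q$ shows that $(d_*\Delta^n)_{\bullet,q}$ is a disjoint union of copies of $\Delta^n$, while $(d_*\Lambda^n_k)_{\bullet,q}$ is a disjoint union, indexed by the $q$‑simplices of $\Delta^n$, of unions of facets of $\Delta^n$ in which the facet opposite the vertex $k$ never occurs --- hence unions of at most $n$ of the $n+1$ facets, that is, generalized horns (or the empty complex), and in all cases contractible. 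On a disjoint union $Y=\coprod_\alpha Y_\alpha$ of contractible simplicial sets the unit $Y\to\N\iso\,\pi(Y)$ is a weak equivalence: $\pi(Y_\alpha)$ is the indiscrete groupoid on the discrete vertex set $(Y_\alpha)_0$, whose nerve $\N\iso\,\pi(Y_\alpha)$ is a contractible Kan complex, so $\eta_{Y_\alpha}$ is a map between contractible simplicial sets. Reassembling over $q$ gives the claimed level‑wise weak equivalence, and with it the lemma.

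The step requiring genuine care is the combinatorial identification of the columns of $d_*\Lambda^n_k$: one must check that each non‑empty stratum is an honest generalized horn and never the full boundary $\partial\Delta^n$, which would not be contractible. This rests entirely on the facet of $\Delta^n$ opposite the vertex $k$ being systematically absent, so that at most $n$ of the $n+1$ facets ever appear in a single stratum; everything else in the argument is formal.
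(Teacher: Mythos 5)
Your proof is correct and takes essentially the same route as the paper's: both rest on the Goerss--Jardine description of the columns of $d_*\Lambda^n_k$ and $d_*\Delta^n$ as disjoint unions of contractible subcomplexes together with the fact that $\pi$ carries each such piece to a groupoid equivalent to the point, and both conclude by two-out-of-three. The only difference is cosmetic in how the comparison square is arranged (you use the unit of $\pi_\bullet\dashv\N\iso$, the paper compares both $d_*(j)$ and $\N\iso\pi_\bullet d_*(j)$ to projections onto discrete bisimplicial sets), and you helpfully make explicit why the strata are generalized horns and never all of $\partial\Delta^n$, a point the paper merely asserts by calling the $C_\beta$ contractible.
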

\begin{proof}
The generating acyclic cofibrations in $\sSet^{2}$ equipped with Moerdijk's model structure are given by 
$$d_{\ast}\Lambda^{n}_{i}\rightarrow d_{\ast}\Delta^{n}=\Delta^{n,n},~i\in\{0,1,\dots, n\}.$$
More precisely:
$$\bigsqcup_{\beta\in\Lambda^{n}_{i}} C_{\beta}\rightarrow \bigsqcup_{\beta\in\Delta^{n}} \Delta^{n},$$
where $C_{\beta}$ is a contractible subcomplex of $\Lambda^{n}_{i}$.
Consider the following commutative diagram:

 $$\xymatrix{
  \bigsqcup_{\beta\in\Lambda^{n}_{i}} C_{\beta}  \ar[r]^-{pr}\ar[d]_-{d_{\ast}j}  & \bigsqcup_{\beta\in\Lambda^{n}_{i}}\ast \ar[d]^-{j} _{\thicksim} \\
    \bigsqcup_{\beta\in\Delta^{n}} \Delta^{n}\ar[r]^-{pr} & \bigsqcup_{\beta\in\Delta^{n}} \ast
  }$$
The projections are weak equivalences of simplicial sets degree by degree, and so diagonal equivalences. Obviously, we have also that $j$ is a diagonal equivalence. We conclude that $d_{\ast}j$ is a diagonal equivalence.
We Apply to the previous diagram the  functor $\N\pi$:
$$\xymatrix{
  \bigsqcup_{\beta\in\Lambda^{n}_{i}} \N\pi C_{\beta}  \ar[r]^-{pr}\ar[d]_-{ \N\pi_{\bullet} d_{\ast}j}  & \bigsqcup_{\beta\in\Lambda^{n}_{i}}\ast \ar[d]^-{j} _{\thicksim} \\
    \bigsqcup_{\beta\in\Delta^{n}}  \N\pi\Delta^{n}\ar[r]^-{pr} & \bigsqcup_{\beta\in\Delta^{n}} \ast
  }$$
Since $C_{\beta}$ is a connected subcomplex of  $ \Lambda^{n}_{i} $, the canonical projection $\pi C_{\beta}\rightarrow \ast$ is an equivalence of categories and induces an equivalence of nerves. Consequently, the horizontal arrows are equivalence degree wise, and so diagonal equivalences. We conclude that  $\N\pi _{\bullet} d_{\ast}(j)$ is also a diagonal equivalence. Finally, $\pi_{\bullet}(j)$ is a weak equivalence in $\sCat.$
\end{proof}
%lemme foncamental scat.
\begin{definition}
Let $\mathcal{M}$ be a category with a class of weak equivalences. A commutative square
$$\xymatrix{
    A \ar[r] \ar[d]  & C \ar[d] \\
    B \ar[r] & D
   }
  $$
in $\mathcal{M}$ is homotopically cocartesian if  the universal morphism $B\bigsqcup_{A}C\rightarrow D$  is a weak equivalence in $\mathcal{M}$ .
 
\end{definition}

\begin{lemma}\label{Niso}
Let $f:\A\rightarrow \B$ a fully faithful inclusion of groupoids. Consider the following pushout diagram in $\Cat:$
$$\xymatrix{
    \A \ar[r] \ar[d] ^{f} & \C \ar[d] \\
    \B \ar[r] & \D=\B\sqcup_{\A}\C.
   }
  $$
  Then the two diagrams 
  $$\xymatrix{
    \A=\iso\A \ar[r] \ar[d]^-{\iso f} &\iso \C \ar[d]  & & \N\iso \A=\N \iso\A \ar[r] \ar[d]^-{\N\iso f} &\N\iso \C \ar[d]\\
    \B=\iso\B \ar[r] & \iso\D && \N\iso\B=\N\iso\B \ar[r] & \N\iso\D
  }
  $$
 % and 
 %  $$\xymatrix{
   %\N\iso \A=\N \iso\A \ar[r] \ar[d]^-{\N\iso f} &\N\iso \C \ar[d] \\
    %\N\iso\B=\N\iso\B \ar[r] & \N\iso\D
  %}
  %$$ 
are homotopically cocartesian in $\Cat$ and respectively in $\sSet$. 
\end{lemma}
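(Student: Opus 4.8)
The plan is to exploit the full faithfulness of $f$ to reduce to the case where $f$ is also essentially surjective — hence a trivial cofibration in $\Cat$ — and then to conclude by a short two-out-of-three argument, using that pushouts of trivial cofibrations are trivial cofibrations.

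I would begin with the reductions and bookkeeping. Reading ``inclusion'' as ``injective on objects'', $f$ is a cofibration in $\Cat$; since every object of $\Cat$ is cofibrant, $\D=\B\sqcup_{\A}\C$ computes the homotopy pushout, and likewise $\iso\B\sqcup_{\iso\A}\iso\C$ and $\N\iso\B\sqcup_{\N\iso\A}\N\iso\C$ are homotopy pushouts, because $\iso f=f$ is again a cofibration and $\N\iso f=\N f$ is a monomorphism ($\N$ carries a faithful functor injective on objects to a monomorphism). So the content of the lemma is exactly that the comparison maps $\iso\B\sqcup_{\iso\A}\iso\C\to\iso\D$ and $\N\iso\B\sqcup_{\N\iso\A}\N\iso\C\to\N\iso\D$ are weak equivalences. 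Throughout I would use two elementary facts: the core functor carries an equivalence of categories to an equivalence of categories, and $\N$ carries an equivalence of categories to a weak homotopy equivalence of simplicial sets.

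Next comes the key reduction. Write $\B=\B_{0}\sqcup\B_{1}$, where $\B_{0}$ is the full subgroupoid on those objects of $\B$ isomorphic to an object in the image of $f$ and $\B_{1}$ is the full subgroupoid on the remaining objects; as $\B$ is a groupoid there are no morphisms between the two parts, so this is an honest coproduct and $f$ factors through $\B_{0}$. The restricted functor $\A\to\B_{0}$ is fully faithful, injective on objects, and essentially surjective, hence a trivial cofibration in $\Cat$. Since $\iso$, $\N$, and the formation of pushouts each commute with coproducts, and $\D\cong(\B_{0}\sqcup_{\A}\C)\sqcup\B_{1}$, the two comparison maps above split as the corresponding comparison maps for the square built from $\A\to\B_{0}$, disjointly unioned with the identity on the $\B_{1}$-part; a coproduct of weak equivalences is a weak equivalence, so it suffices to prove the lemma when $f$ is a trivial cofibration, which I would henceforth assume.

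In that case the argument is short. Since $f$ is a trivial cofibration in $\Cat$, so is its pushout $\C\to\D=\B\sqcup_{\A}\C$; in particular this is an equivalence of categories, so $\iso\C\to\iso\D$ is an equivalence of groupoids and $\N\iso\C\to\N\iso\D$ is a weak equivalence. On the other hand $\iso\C\to\iso\B\sqcup_{\iso\A}\iso\C$ is a pushout of the trivial cofibration $\iso f$, hence itself a weak equivalence, and it fits into the evident commuting triangle with $\iso\C\to\iso\D$; two-out-of-three then shows $\iso\B\sqcup_{\iso\A}\iso\C\to\iso\D$ is a weak equivalence, which is the first statement. The same reasoning with $\N\iso$ replacing $\iso$ — using that $\N\iso f=\N f$ is a trivial cofibration in $\sSet$, so that its pushout $\N\iso\C\to\N\iso\B\sqcup_{\N\iso\A}\N\iso\C$ is a weak equivalence — yields the second statement, and undoing the reduction finishes the proof. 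The one place where I expect to have to be careful is precisely this reduction: it is tempting but false to assert $\iso(\B\sqcup_{\A}\C)\cong\iso\B\sqcup_{\iso\A}\iso\C$ (the core is a right adjoint, so it preserves neither colimits nor, a priori, homotopy pushouts), and the role of the full faithfulness hypothesis is exactly to let the ``unseen'' part $\B_{1}$ split off cleanly so that the genuine comparison collapses to the trivial-cofibration case handled above.
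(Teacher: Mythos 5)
Your proof is correct and takes essentially the same approach as the paper's: decompose $\B$ into a full subgroupoid on which $f$ restricts to a trivial cofibration and a disjoint remainder $\B^{2}$, observe that the pushout splits off $\B^{2}$ as a coproduct factor, and conclude by a two-out-of-three argument using that pushouts of trivial cofibrations are trivial cofibrations and that $\iso$ and $\N$ carry equivalences of categories to weak equivalences. The only difference is presentational: the paper keeps the disjoint factor $\B^{2}$ in the formulas throughout rather than formally reducing to the trivial-cofibration case first.
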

\begin{proof}
The hypothesis on the groupoids $\A$ and $\B$ imply that we can decompose in $\B^{1}\sqcup \B^{2}$  such  $f:\A\rightarrow\B_{1}$ is a trivial cofibration in $\Cat$. So, $\D=( \C\sqcup_{\A}\B_{1})\sqcup \B^{2}$. Lets define $\D^{'}=\B^{1}\sqcup_{\A}\C$, then $\D=\D^{'}\sqcup \B^{2}$. The functor $\C\rightarrow \C\sqcup_{\A}\B_{1}$ is an equivalence of categories, injective on objects. It follows that $\iso \C\rightarrow \iso(\C\sqcup_{\A}\B^{1})$ is a weak equivalence, consequently, the induced functor  $\iso \C\sqcup_{\A}\B_{1}\rightarrow \iso(\C\sqcup_{\A}\B^{1})$ is a weak equivalence in $\Cat$. Finally the second induced functor  
$$\iso\C\sqcup_{\A}\B=\iso \C\sqcup_{\A}\B^{1}\sqcup\B^{2}\rightarrow \iso(\C\sqcup_{\A}\B^{1}\sqcup\B^{2})=\iso \D$$
is an equivalence of categories.

Now, we apply the functor  $\N\iso$ to the initial diagram:
  
  $$\xymatrix{
  \N\A \ar[r] \ar[d] & \N\iso\C \ar[d]^{s}\ar@/^/[rdd] ^{l} \\
    \N\B\ar[r]  \ar@/_/[rrd]  & \N\B\sqcup_{\N\A}\N\iso\C  \ar@{.>}[rd]^{t} \\
     & &  \N\iso\D
  }$$
 We observe that  $\N\B\sqcup_{\N\A}\N\iso\C=(\N\B^{1}\sqcup_{\N\A}\N\iso\C)\sqcup \N\B^{2}$, and $\N\A\rightarrow \N\B^{1}$ is a trivial cofibration by definition of $\B^{1}$. So, the morphism $ \N\iso\C\rightarrow \N\B^{1}\sqcup_{\N\A}\N\iso\C$ is a weak equivalence, since $\sSet$ is a model category.
 On the other hand, $\N\iso\C\rightarrow \N\iso(\B^{1}\sqcup_{\A}\C)$ is a weak equivalence in $\sSet$, because it is the nerve of an equivalence of categories. Consequently, the induced map $\N\B^{1}\sqcup_{\N\A}\N\iso\C\rightarrow \N\iso(\B^{1}\sqcup_{\A}\C)$ is a weak equivalence of simplicial sets. We conclude that morphism of simplicial sets
 $$t:\N\B\sqcup_{\N\A}\N\iso\C=\N\B^{2}\sqcup\N\B^{1}\sqcup_{\N\A}\N\iso\C\rightarrow \N\B^{2}\sqcup\N\D^{'}= \N\iso\D$$ 
 is a weak equivalence of simplicial sets. 
\end{proof}

 \begin{lemma}\label{lemfond}
Let $j:\A\rightarrow \B$ be a generating acyclic cofibration in $\sCat$. Then the pushout of  $j$ a long any morphism $\A\rightarrow \C$  is a weak equivalence.
\end{lemma}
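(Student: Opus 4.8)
The plan is to use Lemma~\ref{lem1} (the transfer lemma): we have already verified in Lemma~\ref{cofgenscat} that $\pi_\bullet(j)$ is a weak equivalence for every generating acyclic cofibration $j$ of $\sSet^2$, and the present lemma is precisely the remaining hypothesis (4) — closure of that class under pushout along arbitrary maps of $\sCat$. So first I would reduce to a manageable shape: recall from Lemma~\ref{cofgenscat} that a generating acyclic cofibration in $\sCat$ is $\pi_\bullet d_\ast(k)$ for $k\colon\Lambda^n_i\hookrightarrow\Delta^n$ a horn inclusion, and that concretely $\pi_\bullet d_\ast(k)$ is a coproduct, indexed over simplices, of maps $\pi C_\beta \to \ast$ where each $C_\beta$ is a contractible (connected) subcomplex of $\Lambda^n_i$. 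Thus $j\colon\A_\bullet\to\B_\bullet$ is, level-wise in the simplicial direction of $\Delta^{op}$, a coproduct of functors $\pi C_\beta\to\ast$, each of which is a fully faithful inclusion of groupoids (indeed $\pi C_\beta$ is a contractible groupoid, and $\ast$ is the terminal groupoid). This is exactly the input to Lemma~\ref{Niso}.

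Next I would analyze the pushout level-wise. Since colimits in $\sCat=[\Delta^{op},\Cat]$ are computed degreewise, for the pushout square
$$\xymatrix{
    \A_\bullet \ar[r] \ar[d]_{j} & \C_\bullet \ar[d] \\
    \B_\bullet \ar[r] & \D_\bullet
   }$$
each $\D_n$ is the pushout in $\Cat$ of $\A_n\to\C_n$ along $j_n\colon\A_n\to\B_n$, and $j_n$ is a fully faithful inclusion of groupoids by the discussion above. I now apply Lemma~\ref{Niso} in each degree $n$: it tells us that the square obtained by applying $\N\iso$ is homotopically cocartesian in $\sSet$, i.e. the canonical map $\N\iso\B_n \sqcup_{\N\iso\A_n}\N\iso\C_n \to \N\iso\D_n$ is a weak equivalence of simplicial sets. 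Running over all $n$, we get a degreewise weak equivalence of bisimplicial sets $\N\iso\B_\bullet \sqcup_{\N\iso\A_\bullet}\N\iso\C_\bullet \to \N\iso\D_\bullet$; here I use that $\N\iso$, being a right adjoint applied levelwise, nonetheless commutes with the relevant pushouts because the pushout of groupoids along a fully faithful inclusion is again computed compatibly — more precisely, Lemma~\ref{Niso} already packages the statement that $\N\iso$ sends this particular kind of pushout square to a homotopically cocartesian square.

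Then I would pass to diagonals. A levelwise (i.e. degreewise in each simplicial variable) weak equivalence of bisimplicial sets induces a weak equivalence on diagonals; applying $\diag$ to the levelwise equivalence from the previous step gives that $\diag\bigl(\N\iso\B_\bullet\sqcup_{\N\iso\A_\bullet}\N\iso\C_\bullet\bigr)\to\diag\N\iso\D_\bullet$ is a weak equivalence in $\sSet$. On the other hand, since $j\colon\A_\bullet\to\B_\bullet$ is a weak equivalence in $\sCat$ (Lemma~\ref{cofgenscat}), $\diag\N\iso(j)$ is a weak equivalence of simplicial sets, and I can compare the homotopy pushout $\diag\N\iso\B_\bullet\sqcup_{\diag\N\iso\A_\bullet}\diag\N\iso\C_\bullet$ with the actual pushout: because $\sSet$ with the standard model structure is left proper and $\diag\N\iso(j)$ is an acyclic map, the pushout of $\diag\N\iso(j)$ along $\diag\N\iso\A_\bullet\to\diag\N\iso\C_\bullet$ is again a weak equivalence. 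Since $\diag$ commutes with colimits, that pushout is exactly $\diag\bigl(\N\iso\B_\bullet\sqcup_{\N\iso\A_\bullet}\N\iso\C_\bullet\bigr)$, and chaining the two equivalences shows $\diag\N\iso\C_\bullet\to\diag\N\iso\D_\bullet$ is a weak equivalence; by definition of the diagonal weak equivalences this says the pushout $\C_\bullet\to\D_\bullet$ of $j$ is a weak equivalence in $\sCat$, as required.

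The main obstacle, and the step that requires care, is the interchange between the levelwise pushout in $\Cat$ and the functor $\N\iso$: $\N\iso$ is only a right adjoint, so it does not commute with pushouts in general, and the whole point of Lemma~\ref{Niso} is to show that along a fully faithful inclusion of groupoids it does so \emph{up to homotopy}. I would therefore spend most of the write-up making sure that the map $j\colon\A_\bullet\to\B_\bullet$ really is, in each simplicial degree, a coproduct of fully faithful inclusions of groupoids — this is where the explicit description of $\pi_\bullet d_\ast$ of a horn inclusion and the connectedness of each $C_\beta$ is essential — and that the contractibility of $C_\beta$ is exactly what makes $\pi C_\beta\to\ast$ both fully faithful and a weak equivalence, so that Lemma~\ref{Niso} applies degreewise and then the diagonal/left-properness argument closes the proof.
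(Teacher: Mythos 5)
Your proof follows the same overall strategy as the paper's: decompose the generating acyclic cofibration $j=\pi_\bullet d_\ast(k)$ as a degreewise coproduct of fully faithful groupoid inclusions $\pi C_\beta\to\ast$, apply Lemma~\ref{Niso} degreewise to show $\N\iso$ turns the levelwise pushout into a homotopically cocartesian square, pass to diagonals to identify $\diag(\N\iso\B_\bullet\sqcup_{\N\iso\A_\bullet}\N\iso\C_\bullet)\to\diag\N\iso\D_\bullet$ as a weak equivalence, and then close with two-out-of-three.

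However, the justification in the last step is incorrect as written, and you should fix it. You argue that because $\sSet$ is left proper and $\diag\N\iso(j)$ is a weak equivalence, the pushout of $\diag\N\iso(j)$ along the attaching map $\diag\N\iso\A_\bullet\to\diag\N\iso\C_\bullet$ is again a weak equivalence. But left properness requires the leg you push out \emph{along} to be a cofibration, and the attaching map $\A_\bullet\to\C_\bullet$ is arbitrary, so $\diag\N\iso\A_\bullet\to\diag\N\iso\C_\bullet$ need not be a monomorphism; left properness simply does not apply to this configuration. The correct observation --- which is what the paper uses, and which you nearly state --- is that $\diag\N\iso(j)$ is itself a \emph{cofibration} of simplicial sets: since each $j_n$ is a fully faithful inclusion of groupoids, $\N\iso(j)$ is a degreewise monomorphism of bisimplicial sets, so $\diag\N\iso(j)$ is a monomorphism in $\sSet$. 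Combined with Lemma~\ref{cofgenscat}, $\diag\N\iso(j)$ is an \emph{acyclic} cofibration, and pushouts of acyclic cofibrations along arbitrary maps are acyclic cofibrations in any model category --- no properness needed. With that substitution in place, your argument is complete and agrees with the paper's.
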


\begin{proof}
First of all, We remark that any acyclic generating cofibration in $\sCat$ verify degree by degree the hypothesis of lemma \ref{Niso}.
Consider the following pushout $\sCat:$ 
$$\xymatrix{
  \A \ar[r] \ar[d] ^{j} & \C \ar[d]^{j^{'}}\\
   \B\ar[r] & \D 
}$$
 
 Applying the functor $\diag\N\iso$ to the precedent pushout, we obtain a commutative diagram in  $\sSet$:
 
$$\xymatrix{
  \diag\N\A \ar[r] \ar[d] ^-{\diag\N\iso(j)}& \diag\N\iso\C \ar[d]^{s}\ar@/^/[rdd] ^{l} \\
   \diag \N\B\ar[r]  \ar@/_/[rrd]  & \diag(\N\B\sqcup_{\N\A}\N\iso\C)  \ar@{.>}[rd]^{\diag (t)} \\
     & &  \diag\N\iso\D,
  }$$
  
since the colimits in $\sCat$ are computed degree wise. Applying lemma \ref{Niso} degree by degree, we have that $t$ is a degree wise equivalence in $\sSet^{2}$, and so $\diag(t)$ is a weak equivalence in $\sSet$. On an other hand, $\diag\N\iso(j)$ is a cofibration of simplicial sets, since $\N\iso(j)$ is a degree wise monomorphism and it is a diagonal weak equivalence by \ref{cofgenscat}. Consequently, $s$ is a weak equivalence. By the property 2 out of 3, we conclude that $l$ is a weak equivalence.
  
 \end{proof}

Finally, we can prove that $\sCat$ is a cofibrantly generated model category

\begin{proof}  [\textbf{Proof of the Theorem} \ref{structure}] 
 The lemma \ref{lem1} permits to conclude that $\sCat$ is a cofibrantly generated model category since,
\begin{enumerate}
\item The hypothesis (1) is a consequence of  \ref{petitscat}.
\item The hypothesis (2) is a consequence of   \ref{smallscat}.
\item The hypothesis (3) is a consequence of the fact that in $\sSet$ a transfinite composition of weak equivalences is again a weak equivalence.
\item The hypothesis (4) is a consequence of \ref{lemfond}.
\end{enumerate}
 
\end{proof}

% Propriété de la structure modèle de $\sCat$
\section{Additional properties of the diagonal model structure on $\sCat$}\label{section2}
In this section, we will establish some properties of the model structure in $\sCat$, such that left and right properness and cellularity. 

\subsection{Cofibrations in $\sCat$}
In this paragraph, we describe some properties of cofibrations in $\sCat$ in order to prove that new model category is left proper. 
%We keep the same notation for the generating cofibration ($i: \alpha\rightarrow \beta$).
The simplicial set $\partial\Delta^{n}$  is generated by  $(d_{i}e_{n}$ avec $ 0 \leq i\leq n$ where $e_{n}$ is the only non degenerated  $n-$simplex of $\Delta^{n}$. The bisimplicial set $d_{\ast}\partial\Delta^{n}$ is generated by 
$(d_{i}e_{n},d_{i}e_{n})$. Following the same strategy as in (\cite{goerss1999}, chapter 4, 3.3), $d_{\ast}\partial\Delta^{n}$ can be described as 
$$ \bigsqcup _{\sigma\in \partial\Delta^{n} }C^{\sigma},$$
where the simplicial set $C^{\sigma}$ is generated by the faces $d_{i}e_{n}$ which contain $\sigma.$ Notice that the number of faces with this property is strictly less than $n+1$, this implies that $C^{\sigma}\rightarrow \ast$ is a weak equivalence. Moreover, $\pi(C^{\sigma})$ is a groupoid equivalent to the trivial groupoid $\ast$, more precisely, between to objects of the category $\pi(C^{\sigma})$ there is exactly one isomorphism.

%propriété fondamentale des cofibtaions dans sCat
Now, we give the fundamental property of cofibrations in $\sCat$ 
\begin{lemma}\label{cofinj}
The cofibrations in $\sCat$ are inclusions of categories i.e.,  the cofibrations are injective on objects and morphisms.
\end{lemma}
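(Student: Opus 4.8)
The plan is to characterize the cofibrations of $\sCat$ as retracts of transfinite compositions of pushouts of the generating cofibrations $\pi_{\bullet}d_{\ast}(\partial\Delta^{n}\hookrightarrow\Delta^{n})$, and then to observe that the property of being ``injective on objects and morphisms'' is stable under these three operations. So the first step is to unwind what the generating cofibration $\pi_{\bullet}d_{\ast}(\partial\Delta^{n}\hookrightarrow\Delta^{n})$ actually looks like in each simplicial degree. Using the decomposition $d_{\ast}\partial\Delta^{n}\cong\bigsqcup_{\sigma\in\partial\Delta^{n}}C^{\sigma}$ recalled just above, together with the fact that $\pi(C^{\sigma})$ is the trivial (codiscrete) groupoid on its object set, one sees that in degree $k$ the map $\pi_{\bullet}d_{\ast}(\partial\Delta^{n})_k\to\pi_{\bullet}d_{\ast}(\Delta^{n})_k$ is a functor between groupoids which is injective on objects and faithful; in particular it is an inclusion of categories. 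This is the base case.

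The second step is to show that inclusions of categories (functors injective on objects and morphisms) are closed under pushout in $\sCat$. Since colimits in $\sCat=[\Delta^{op},\Cat]$ are computed degreewise, it suffices to treat this one simplicial degree at a time, i.e.\ to show that in $\Cat$ the pushout of an inclusion of categories along an arbitrary functor is again an inclusion of categories. For the object level this is immediate because $\Ob\colon\Cat\to\mathbf{Set}$ preserves colimits and injections of sets are stable under pushout. For the morphism level one uses the explicit description of a pushout $\B\sqcup_{\A}\C$: its morphisms are equivalence classes of zig-zags of composable morphisms alternately from $\B$ and $\C$, and when $\A\to\B$ is injective on objects and morphisms one checks that the canonical map $\C\to\B\sqcup_{\A}\C$ remains injective on morphisms — essentially because the relation imposed never identifies two distinct morphisms of $\C$ nor collapses a morphism of $\C$ onto an identity. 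I would cite the standard combinatorial description of pushouts of categories (or, equivalently, note that $\A\hookrightarrow\B$ being injective on objects and morphisms makes it a cofibration in the Joyal--Tierney model structure, which is left proper, and track monomorphisms).

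The third step is the transfinite composition and retract closure. A (possibly transfinite) composition of inclusions of categories, computed degreewise in $\sCat$, is again injective on objects and morphisms, because a colimit of a chain of injections of sets is an injection and likewise degreewise for morphism sets; and a retract of a map injective on objects and morphisms is again such, since injectivity is preserved by retracts in $\mathbf{Set}$ applied degreewise. Combining the three steps with the small object argument / the cofibrant generation established in Theorem~\ref{structure}, every cofibration in $\sCat$ is a retract of a relative $\pi_{\bullet}d_{\ast}(\I)$-cell complex, hence is degreewise an inclusion of categories, which is the claim.

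The main obstacle I anticipate is the morphism-level injectivity in the pushout step: pushouts in $\Cat$ are genuinely complicated, and one must be careful that no unexpected identifications of morphisms of $\C$ occur when $\A$ is glued in. The cleanest route is probably to avoid the raw zig-zag description and instead invoke left properness of the Joyal--Tierney model structure together with the fact that the cofibrations there are precisely the functors injective on objects, combined with a separate elementary argument that such cofibrations are automatically faithful is false in general — so one does need the honest combinatorial statement that the pushout of a functor which is injective on objects \emph{and} morphisms stays injective on morphisms. I would isolate this as the key sublemma and prove it by the explicit path-category description, checking that the imposed congruence, restricted to morphisms coming from $\C$, is trivial.
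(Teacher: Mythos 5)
Your outline---reduce to generating cofibrations, then show closure under pushout, transfinite composition, and retract---is the same decomposition the paper uses, and you are right that the only delicate point is injectivity on morphisms under pushout. However, the general sublemma you isolate as the key step, namely that the pushout in $\Cat$ of a functor injective on objects and morphisms along an arbitrary functor remains injective on morphisms, is \emph{false}. Take $\A=[1]$ (the category $0\rightarrow 1$), $\B$ the category with the same two objects but with $0\rightarrow 1$ made invertible, and the evident inclusion $\A\hookrightarrow\B$, which is injective on objects and morphisms. Let $\C$ be the one--object category whose endomorphism monoid is $\{1,\gamma\}$ with $\gamma^{2}=\gamma$, and let $\A\rightarrow\C$ send the nonidentity arrow to $\gamma$. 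In the pushout $\B\sqcup_{\A}\C$ the image of $\gamma$ acquires a two--sided inverse, and an invertible idempotent is the identity, so $\gamma$ is identified with $1$; thus $\C\rightarrow\B\sqcup_{\A}\C$ fails to be injective on morphisms. So the ``honest combinatorial statement'' you flag as the crux cannot be proved at that level of generality, and your plan has a genuine gap precisely where you anticipated trouble.

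The paper avoids this by using the specific shape of the generating cofibrations. Degreewise, a generating cofibration $\A_{n}\rightarrow\B_{n}$ decomposes as $\A_{n}\rightarrow\B_{n}^{1}\sqcup\B_{n}^{2}$ with $\A_{n}\rightarrow\B_{n}^{1}$ an \emph{acyclic} cofibration in $\Cat$ (a fully faithful inclusion between disjoint unions of codiscrete groupoids). Since every object of $\Cat$ is fibrant, any acyclic cofibration with fibrant source admits a retraction $r:\B_{n}^{1}\rightarrow\A_{n}$ with $r\circ i_{n}=\mathrm{id}$. Given any $f:\A_{n}\rightarrow\C_{n}$, the universal property then produces a retraction of $\C_{n}\rightarrow\B_{n}^{1}\sqcup_{\A_{n}}\C_{n}$ (induced by $fr$ on $\B_{n}^{1}$ and the identity on $\C_{n}$), so this map is a split monomorphism; composing with the coproduct inclusion into $(\B_{n}^{1}\sqcup_{\A_{n}}\C_{n})\sqcup\B_{n}^{2}$ keeps it an inclusion of categories. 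Note that this argument does not apply to the counterexample above, because $[1]\rightarrow\B$ there is a cofibration but not an acyclic one, so no retraction exists. With this structural replacement for your would-be general pushout lemma, your transfinite composition and retract steps go through exactly as you describe.
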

\begin{proof}
Let $i:\A_{\bullet}\rightarrow \B_{\bullet}$ be a generating cofibration in $\sCat$. Degreewise, the generating cofibrations in $\sCat$ have the property that they are inclusions of categories. Moreover, we have a decomposition of $\B_{n}$ in $\B_{n}^{1}\sqcup \B_{n}^{2}$ such that $i_{n}: \A_{n}\rightarrow \B_{n}^{1}$ is a trivial cofibration in $\Cat$. But all objects in $\Cat$ are fibrant, this implies that we have a retraction. Consequently the pushout of $i:\A_{\bullet}\rightarrow \B_{\bullet}$ along any functor $f:\A_{\bullet}\rightarrow\C_{\bullet}$ is an inclusion degree by degree, since the colimits in $\sCat$ are computed degreewise.
The transfinite composition of inclusions in $\sCat$ is again an inclusion and the $I-Cell$ are inclusion of categories.
By the same way the retracts of  $I-Cell$ are also inclusion of categories. We conclude that cofibrations in $\sCat$ are inclusions of categories. 
\end{proof}
   
  \begin{lemma}
Let  $i: \A_{\bullet}\rightarrow \B_{\bullet}$ be a cofibration in $\sCat$,
and consider the following pushout diagram in $\sCat$:
$$\xymatrix{
    \A_{\bullet}\ar[r] \ar[d] ^{i} & \C_{\bullet} \ar[d] \\
   \B_{\bullet} \ar[r] & \D_{\bullet}.
  }$$
  Then the functor  $\N\iso$ sends this pushout to a homotopically cocartesian square in  $\sSet^{2}$ equipped with projective model structure.  
  \end{lemma}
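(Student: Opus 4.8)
The plan is to reduce the statement to the already-established Lemma \ref{Niso} by analyzing the structure of cofibrations in $\sCat$ obtained in Lemma \ref{cofinj}. First I would recall that it suffices to prove the claim for $i$ a generating cofibration, and then propagate to arbitrary cofibrations by transfinite composition and retracts, using that $\N\iso$ commutes with the relevant colimits (nerves and $\iso$ both commute with filtered colimits, and pushouts in $\sCat$ are computed degreewise, as are pushouts in $\sSet^2$ with the projective structure). So the heart of the matter is a single pushout along a generating cofibration.

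For a generating cofibration $i:\A_\bullet\to\B_\bullet$, the key observation — already used in the proof of Lemma \ref{cofinj} — is that degreewise we may decompose $\B_n=\B_n^1\sqcup\B_n^2$ with $i_n:\A_n\to\B_n^1$ a trivial cofibration of categories, and that $\A_n,\B_n$ are groupoids with $i_n$ fully faithful (this is exactly the degreewise hypothesis of Lemma \ref{Niso}, since the generating cofibrations come from $\pi_\bullet d_\ast$ applied to $\partial\Delta^n\to\Delta^n$, whose building blocks $\pi(C^\sigma)$ are groupoids with a unique isomorphism between any two objects). Hence, degreewise, Lemma \ref{Niso} applies and tells us that the square obtained by applying $\N\iso$ to the pushout
$$\xymatrix{
\A_n \ar[r] \ar[d]^{i_n} & \C_n \ar[d] \\
\B_n \ar[r] & \D_n
}$$
is homotopically cocartesian in $\sSet$, i.e.\ $\N\B_n\sqcup_{\N\A_n}\N\iso\C_n\to\N\iso\D_n$ is a weak equivalence of simplicial sets. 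Taking these together over all $n$, and using again that pushouts in both $\sCat$ and in $\sSet^2_{\mathrm{proj}}$ are computed degreewise while $\N\iso$ is applied levelwise, the comparison map $\N\iso\B_\bullet\sqcup_{\N\iso\A_\bullet}\N\iso\C_\bullet\to\N\iso\D_\bullet$ is a degreewise (hence projective) weak equivalence in $\sSet^2$. That is precisely the assertion that the image square is homotopically cocartesian for the projective model structure.

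To go from generating cofibrations to all cofibrations, I would argue as follows: pushouts of generating cofibrations are handled by the above; a relative $I$-cell complex is a transfinite composition of such pushouts, and since $\N\iso$ preserves these transfinite compositions (filtered colimits, computed degreewise) and a transfinite composition of degreewise weak equivalences between degreewise-cofibrant towers is again a weak equivalence — here one uses that by Lemma \ref{cofinj} everything in sight is a levelwise inclusion, so the towers are projectively cofibrant — the comparison map remains a weak equivalence. Finally a retract argument extends this to all cofibrations. The main obstacle I anticipate is the bookkeeping for the transfinite/retract step: one has to be careful that the homotopy pushout comparison maps assemble compatibly along the tower and that cofibrancy hypotheses are met so that the ``transfinite composition of weak equivalences is a weak equivalence'' principle genuinely applies in $\sSet^2_{\mathrm{proj}}$; the single-pushout case itself is essentially a degreewise invocation of Lemma \ref{Niso} and should be routine.
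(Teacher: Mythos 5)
Your proposal is actually more careful than the paper's own proof, which is a single sentence: ``The pushouts in $\sCat$ are computed degreewise. The cofibrations in $\sCat$ verify the same hypothesis of \ref{Niso} degreewise.'' Read literally, that assertion is false: a general cofibration in $\sCat$, e.g.\ $\C_{\bullet}\to\D_{\bullet}=\B_{\bullet}\sqcup_{\A_{\bullet}}\C_{\bullet}$ obtained by pushing a generating cofibration out along a map into an arbitrary simplicial category $\C_{\bullet}$, need not have groupoid domain and codomain, nor need it be degreewise fully faithful, so the hypothesis of Lemma \ref{Niso} simply does not hold degreewise for it. You correctly pinpoint that the hypothesis of \ref{Niso} is manifestly satisfied degreewise only for the \emph{generating} cofibrations $\pi_{\bullet}d_{\ast}\partial\Delta^{n}\to\pi_{\bullet}d_{\ast}\Delta^{n}$ (which degreewise are fully faithful inclusions of groupoids, since the $\pi(C^{\sigma})$ are groupoids with a unique isomorphism between any two objects), and then rebuild the result for arbitrary cofibrations by a cell-induction/retract argument. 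This is in fact the pattern the paper itself adopts later, in Remark \ref{remark1} and Lemmas \ref{Niso2}--\ref{Niso5}, to prove the analogous statement for $\diag\N\iso$.

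The one place where your sketch is thin is the transfinite-composition step. It is not enough to say ``$\N\iso$ preserves the transfinite composition and a transfinite composition of weak equivalences is a weak equivalence'': one has to show, by induction along the tower $\A_{\bullet}=\A_{\bullet}^{0}\to\A_{\bullet}^{1}\to\cdots$, that at each stage the comparison map $\N\iso\A_{\bullet}^{s}\sqcup_{\N\iso\A_{\bullet}}\N\iso\C_{\bullet}\to\N\iso\C_{\bullet}^{s}$ (with $\C_{\bullet}^{s}=\A_{\bullet}^{s}\sqcup_{\A_{\bullet}}\C_{\bullet}$) is a degreewise weak equivalence, and the inductive step requires the pasting-of-pushouts argument of Remark \ref{remark1} and Lemma \ref{Niso2} together with left properness of $\sSet$, not merely that colimits preserve weak equivalences. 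Once that is supplied, the levelwise-cofibrant-tower argument you appeal to closes the transfinite step, and the retract step is routine. So: your approach is correct and more rigorous than the paper's stated proof, but the propagation of the homotopy-pushout condition along the cell tower should be made explicit rather than folded into ``preserves transfinite compositions.''
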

  \begin{proof}
  The pushouts in $\sCat$ are computed degreewise. The cofibrations in $\sCat$ verify the same hypothesis of  \ref{Niso} degreewise.
  \end{proof}
  
 \begin{remark}\label{remark1}
Consider a commutative diagram in a category $\mathcal{M}$:
  $$\xymatrix{
    A \ar[r] \ar[d] ^{f} & C \ar[d]\ar[r] & C^{'}\ar[d]\\
    B \ar[r] & D\ar[r] & D^{'}
  }$$
  where both squares are pushouts. Then the following square 
 $$\xymatrix{
    A \ar[r] \ar[d] ^{f}  & C^{'}\ar[d]\\
    B \ar[r] & D^{'}
  }$$
  is also a pushout diagram in $\mathcal{M}$.\\
  \end{remark}

  \begin{lemma}\label{Niso2}
  With the same notations as in \ref{remark1}, and $\A\rightarrow \B$ verifying the same hypothesis as in \ref{Niso}, then the natural morphisms
  $$\N\B\sqcup_{\N\iso\A}\N\iso\C^{'}\rightarrow\N\D\sqcup_{\N\iso\C}\N\iso\C^{'}$$
    %$$  (\N\B\sqcup_{\N\A}\N\iso\C)\sqcup_{\N\iso\C}\N\iso\C^{'}\rightarrow \N\iso\D\sqcup_{\N\iso\C}\N\iso\C^{'}}$$
  and 
  $$\N\iso\D\sqcup_{\N\iso\C}\N\iso\C^{'}\rightarrow \N\iso\D^{'}$$
are weak equivalences in $\sSet$.
  \end{lemma}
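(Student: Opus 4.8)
\textbf{Proof plan for Lemma \ref{Niso2}.}
The plan is to reduce both statements to Lemma \ref{Niso} by exploiting the fact that the relevant squares are pushouts and that $\N\iso$ sends the hypothesis-type pushouts to homotopically cocartesian squares. For the second map, observe that by Remark \ref{remark1} the outer rectangle formed by the two pushout squares is again a pushout, so $\A\to\B$ (which still satisfies the hypotheses of \ref{Niso}, being fully faithful) pushed out along $\A\to\C\to\C'$ gives $\D'$. Applying \ref{Niso} to this composite pushout shows that $\N\iso\C'\sqcup_{\N\iso\A}\N\iso\B\to\N\iso\D'$ is a weak equivalence in $\sSet$. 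But that same homotopy pushout can be computed in two stages: first form $\N\iso\C'\sqcup_{\N\iso\A}\N\iso\B$, and then, using that $\N\iso\C\to\N\iso\B\sqcup_{\N\iso\A}\N\iso\C$ is computed as the appropriate gluing (again by \ref{Niso} the square $\N\iso$ of the first pushout is homotopically cocartesian), identify $\N\iso\D\sqcup_{\N\iso\C}\N\iso\C'$ with $\N\iso\C'\sqcup_{\N\iso\A}\bigl(\N\iso\B\sqcup_{\N\iso\A}\N\iso\C\bigr)\sqcup_{\N\iso\C}\cdots$; more cleanly, I would use the pasting law for homotopy pushouts in $\sSet$ together with the fact that $\N\iso$ of the left square is homotopically cocartesian to conclude $\N\iso\D\sqcup_{\N\iso\C}\N\iso\C'\to\N\iso\D'$ is a weak equivalence.

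For the first map, the strategy is similar but uses the decomposition already exploited in the proof of \ref{Niso}: write $\B=\B^1\sqcup\B^2$ with $\A\to\B^1$ a trivial cofibration in $\Cat$, so that after applying $\N\iso$ the map $\N\iso\A\to\N\B^1$ is a trivial cofibration of simplicial sets. Then $\N\B\sqcup_{\N\iso\A}\N\iso\C'$ splits as $\N\B^2\sqcup\bigl(\N\B^1\sqcup_{\N\iso\A}\N\iso\C'\bigr)$, and since $\N\iso\A\to\N\B^1$ is a trivial cofibration the second summand receives a weak equivalence from $\N\iso\C'$; likewise $\N\D\sqcup_{\N\iso\C}\N\iso\C'$ can be analyzed using $\D=\D'\sqcup\B^2$ with $\D'=\B^1\sqcup_\A\C$ and the fact that $\N\iso\C\to\N\iso\D'$ (being the nerve of an equivalence of categories by the argument in \ref{Niso}) is a weak equivalence, forcing $\N\iso\C'\to\N\D\sqcup_{\N\iso\C}\N\iso\C'$ to be a weak equivalence as well. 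Comparing the two under the canonical map $\N\B\sqcup_{\N\iso\A}\N\iso\C'\to\N\D\sqcup_{\N\iso\C}\N\iso\C'$ and invoking the two-out-of-three property then gives the claim.

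The main obstacle I anticipate is bookkeeping the various pushouts correctly: one must be careful that the maps out of $\N\iso\C'$ into the two iterated pushouts are genuinely compatible with the comparison map, and that the model-categorical gluing lemma is being applied along honest cofibrations (which is why the decomposition $\B=\B^1\sqcup\B^2$ and the triviality of $\A\to\B^1$ are essential — they replace the potentially non-cofibrant map $\N\iso\A\to\N\iso\B$ by something to which left properness of $\sSet$, or directly the gluing lemma, applies). Once the squares are set up, everything else is two-out-of-three and the pasting law for homotopy cocartesian squares, so the work is organizational rather than conceptual.
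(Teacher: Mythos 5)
Your argument is correct, but it routes both halves differently from the paper and makes them logically independent where the paper's are chained. For the second map you paste homotopy cocartesian squares: the $\N\iso$-images of the left square and of the outer rectangle are homotopy cocartesian by Lemma \ref{Niso}, hence so is the right square, which is exactly the claim. The paper instead proves the first map first and then deduces the second by two out of three. For the first map you re-open the decomposition $\B=\B^{1}\sqcup\B^{2}$ from inside the proof of \ref{Niso} and show that $\N\iso\C^{'}\sqcup\N\B^{2}$ maps by weak equivalences into both iterated pushouts, closing with two out of three; the paper argues more economically that $\N\iso\C\rightarrow\N\B\sqcup_{\N\A}\N\iso\C$ and $\N\iso\C\rightarrow\N\iso\D$ are both cofibrations, so pushing the \ref{Niso}-equivalence between them out along the arbitrary $\N\iso\C\rightarrow\N\iso\C^{'}$ is a map of homotopy pushouts and hence an equivalence, by left properness of $\sSet$, without re-entering the internals of \ref{Niso}. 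Two points you should tighten: the symbol $\D^{'}$ in your sketch collides with the $\D^{'}=\D\sqcup_{\C}\C^{'}$ of the statement (you are reusing it for $\B^{1}\sqcup_{\A}\C$), and when you push $\N\iso\C\rightarrow\N\iso(\B^{1}\sqcup_{\A}\C)$ out along an arbitrary map and claim the result is still an equivalence, you need that map to be a \emph{trivial cofibration}, not merely a weak equivalence; it is one (nerve of a fully faithful inclusion of groupoids that is an equivalence), but you should say so. Finally, the worry at the end about $\N\iso\A\rightarrow\N\iso\B$ being potentially non-cofibrant is unfounded: since $\A\rightarrow\B$ is a fully faithful inclusion of groupoids, its nerve is already a monomorphism of simplicial sets.
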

  \begin{proof}
  By \ref{Niso}, the morphism $\N\B\sqcup_{\N\A}\N\iso\C\rightarrow \N\iso \D$ is an equivalence. Moreover
  $\N\iso \C\rightarrow \N\B\sqcup_{\N\A}\N\iso\C$ and the morphism $\N\iso\C\rightarrow \N\iso\D$ are cofibrations in $\sSet$.
Since $\sSet$ is left proper, we conclude that  
  $$(\N\B\sqcup_{\N\A}\N\iso\C)\sqcup_{\N\iso\C}\N\iso\C^{'}\rightarrow \N\iso\D\sqcup_{\N\iso\C}\N\iso\C^{'}$$
is a weak equivalence.
  to show the other equivalence, it is sufficient to remark that 
  $$(\N\B\sqcup_{\N\A}\N\iso\C)\sqcup_{\N\iso\C}\N\iso\C^{'}=\N\B\sqcup_{\N\A}\N\iso\C^{'}\rightarrow  \N\iso\D^{'}$$
 is an equivalence by \ref{Niso} and by the property  "2 out of 3"
  $$\N\iso\D\sqcup_{\N\iso\C}\N\iso\C^{'}\rightarrow\N\iso\D^{'}$$ is a weak equivalence.
  \end{proof}

  \begin{corollary}\label{Niso3}
  The simplicial version of lemma \ref{Niso2} if we replace $f:\A\rightarrow\B$ by 
  $i:\A_{\bullet}\rightarrow\B_{\bullet}$ in $\sCat$ verifying the same hypothesis degrewise, then 
  $$  \N\B_{\bullet}\sqcup_{\N\A_{\bullet}}\N\iso\C^{'}_{\bullet}\rightarrow \N\iso\D_{\bullet}\sqcup_{\N\iso\C_{\bullet}}\N\iso\C^{'}_{\bullet}$$
  and 
  $$\N\iso\D_{\bullet}\sqcup_{\N\iso\C_{\bullet}}\N\iso\C^{'}_{\bullet}\rightarrow \N\iso\D^{'}_{\bullet}$$
  are degreewise weak equivalences.
  \end{corollary}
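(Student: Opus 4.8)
The plan is to derive Corollary \ref{Niso3} from Lemma \ref{Niso2} by a purely formal ``apply everything degreewise'' argument, exactly in the spirit of how Lemma \ref{lemfond} is deduced from Lemma \ref{Niso}. First I would observe that all the functors involved — $\N$, $\iso$, and the pushout constructions — are computed levelwise in $\sCat$, $\sSet$, and $\sSet^2$: indeed $\N\iso$ is applied degreewise by definition of the extended adjunction, and colimits in a diagram category such as $\sCat = [\Delta^{op},\Cat]$ and $\sSet^2 = [\Delta^{op},\sSet]$ are computed objectwise. Hence the two maps in the statement, evaluated at each $[n]\in\Delta^{op}$, are precisely the maps appearing in Lemma \ref{Niso2} applied to the diagram of groupoids obtained from $i\colon\A_\bullet\to\B_\bullet$ (together with $\C_\bullet\to\C'_\bullet$) in degree $n$.

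Next I would check that the hypotheses of Lemma \ref{Niso2} are met in each degree. Here I invoke the hypothesis of the corollary itself: $i\colon\A_\bullet\to\B_\bullet$ satisfies the hypotheses of Lemma \ref{Niso} degreewise, meaning that for every $n$ the map $i_n\colon\A_n\to\B_n$ is a fully faithful inclusion of groupoids; and the ambient data $\C_\bullet\to\D_\bullet\to\D'_\bullet$, $\C_\bullet\to\C'_\bullet$ form, in each degree, the two-pushout configuration of Remark \ref{remark1} (using again that pushouts in $\sCat$ are levelwise). With these degreewise hypotheses verified, Lemma \ref{Niso2} gives, for each $n$, that
$$\N\B_n\sqcup_{\N\A_n}\N\iso\C'_n\;\longrightarrow\;\N\iso\D_n\sqcup_{\N\iso\C_n}\N\iso\C'_n$$
and
$$\N\iso\D_n\sqcup_{\N\iso\C_n}\N\iso\C'_n\;\longrightarrow\;\N\iso\D'_n$$
are weak equivalences in $\sSet$. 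Reassembling over all $n$, this says exactly that the two maps of bisimplicial sets in the statement are weak equivalences in each simplicial degree, i.e.\ degreewise (projective) weak equivalences in $\sSet^2$, which is the desired conclusion.

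There is essentially no hard part here; the corollary is a bookkeeping statement. The only point requiring a small amount of care is the identification, in each degree $n$, of the pushout $\N\B_\bullet\sqcup_{\N\A_\bullet}\N\iso\C'_\bullet$ evaluated at $n$ with the pushout $\N\B_n\sqcup_{\N\A_n}\N\iso\C'_n$ formed in $\sSet$ — i.e.\ that forming the pushout in $\sCat$ and then applying the levelwise functor $\N\iso$ agrees with applying $\N\iso$ first and then forming the pushout in $\sSet$ in each degree. This is immediate because $\N\iso\colon\sCat\to\sSet^2$ is defined levelwise and colimits in both categories are levelwise, so the two operations commute degreewise. Once this identification is in place, the corollary follows verbatim from Lemma \ref{Niso2}, with no further computation needed; I would write the proof in one or two sentences, as the authors do for analogous levelwise reductions elsewhere in the paper.
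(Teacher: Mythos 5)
Your proposal is correct and follows exactly the paper's own argument, which simply states ``Apply \ref{Niso2} degree by degree''; you have spelled out the levelwise reductions (colimits and $\N\iso$ computed objectwise) that the paper leaves implicit, but the essential content is identical.
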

  \begin{proof}
  Apply  \ref{Niso2} degree by degree.
  \end{proof}

 \subsection{Properness of $\sCat$ }
  We prove that  $\sCat$ equipped with the model structure of \ref{structure} is left proper. 
  \begin{lemma}\label{Niso4}
  Let $i:\A_{\bullet}\rightarrow\B_{\bullet}$ an element of $I-Cell$ in $\sCat$. The functor $\diag\N\iso$ sends the following pushout in $\sCat$:
  $$\xymatrix{
    \A_{\bullet} \ar[r] \ar[d] ^{i} & \C_{\bullet} \ar[d] \\
    \B_{\bullet} \ar[r] & \D_{\bullet}
  }$$
 to a homotopically cocartesian square in $\sSet$.
  \end{lemma}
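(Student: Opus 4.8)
The plan is to reduce the statement about a general $I$-Cell to the statement about single generating cofibrations (proven in the preceding lemmas) by a transfinite induction, and at each stage to compare the pushout of $\N\iso$ with a genuine homotopy pushout in $\sSet^2$ (projective), then apply $\diag$, which sends levelwise weak equivalences to weak equivalences and preserves colimits. First I would recall that $\diag\N\iso$ preserves all colimits (it is a left adjoint, being $\diag$ composed with $\pi_\bullet$'s right adjoint... more precisely $\N\iso$ is applied levelwise and $\diag$ is a left adjoint, and both commute with the filtered colimits and pushouts used to build $I$-Cell objects), so it suffices to show that the image square is homotopy cocartesian. By \ref{Niso3} (the simplicial, iterated version of \ref{Niso2}), for a single generating cofibration $i\colon\A_\bullet\to\B_\bullet$ the natural comparison map $\N\iso\D_\bullet\sqcup_{\N\iso\C_\bullet}\N\iso\C_\bullet'\to\N\iso\D_\bullet'$ and the map out of the honest pushout $\N\B_\bullet\sqcup_{\N\A_\bullet}\N\iso\C_\bullet'$ are levelwise weak equivalences; combined with the fact that $\N\iso i$ is a levelwise monomorphism (hence a projective cofibration) and $\sSet^2_{\mathrm{proj}}$ is left proper, this identifies the pushout of $\N\iso i$ with its homotopy pushout.

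Next I would set up the transfinite induction on the cellular presentation of $i\colon\A_\bullet\to\B_\bullet$ as $I$-Cell, i.e. $\B_\bullet=\colim_{\alpha<\lambda}\B_\bullet^{(\alpha)}$ where each $\B_\bullet^{(\alpha+1)}$ is obtained from $\B_\bullet^{(\alpha)}$ by pushout along a generating cofibration and $\B_\bullet^{(0)}=\A_\bullet$. Given the pushout square in the statement, form the induced filtration $\C_\bullet=\D_\bullet^{(0)}\to\D_\bullet^{(1)}\to\cdots$ with $\D_\bullet^{(\alpha)}=\B_\bullet^{(\alpha)}\sqcup_{\A_\bullet}\C_\bullet$; by \ref{remark1} each consecutive square $\D_\bullet^{(\alpha)}\to\D_\bullet^{(\alpha+1)}$ is again a pushout along a single generating cofibration, with the ``top'' object now $\D_\bullet^{(\alpha)}$ playing the role of $\C_\bullet'$ in \ref{Niso3}. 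Applying $\diag\N\iso$ and using the single-cell case from the previous paragraph, together with \ref{cofinj} to know the maps $\diag\N\iso\B_\bullet^{(\alpha)}\to\diag\N\iso\B_\bullet^{(\alpha+1)}$ are cofibrations of simplicial sets, I get that each successor square maps to a homotopy cocartesian square, and that the maps along the filtration are cofibrations. For limit ordinals one uses that $\diag$ and $\N\iso$ commute with transfinite compositions along monomorphisms and that such compositions of weak equivalences in $\sSet$ are weak equivalences (the hypothesis already invoked in the proof of Theorem \ref{structure}). Assembling the tower (a ``ladder'' of homotopy cocartesian squares glued along cofibrations is homotopy cocartesian) yields that $\diag\N\iso$ applied to the original square
$$\xymatrix{
    \diag\N\iso\A_{\bullet} \ar[r]\ar[d] & \diag\N\iso\C_{\bullet}\ar[d]\\
    \diag\N\iso\B_{\bullet}\ar[r] & \diag\N\iso\D_{\bullet}
}$$
is homotopy cocartesian in $\sSet$.

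The main obstacle I anticipate is the bookkeeping at limit stages: one must check that $\N\iso$, applied levelwise, commutes with the relevant transfinite colimits (this needs that the transition maps are inclusions of categories, which is exactly \ref{cofinj}, so that nerves of colimits are colimits of nerves), and that $\diag$ of a transfinite composition of monomorphisms-that-are-weak-equivalences is still a weak equivalence — which follows because $\diag$ preserves colimits and monomorphisms and weak equivalences between cofibrant-enough diagrams, but deserves to be spelled out. A secondary technical point is justifying the ``gluing lemma'' for a tower of homotopy cocartesian squares in $\sSet$; this is standard (it follows from left properness of $\sSet$ and the cube lemma applied inductively), and since $\sSet$ is left proper and cellular this causes no trouble. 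Everything else is a routine transcription of \ref{Niso2}, \ref{Niso3} and \ref{cofinj} through the colimit-preserving functor $\diag\N\iso$.
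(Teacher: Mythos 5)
Your proposal follows essentially the same route as the paper's proof: decompose $i$ as a transfinite composition of single\nobreakdash-cell attachments, compare each stage's pushout with its image under $\N\iso$ via Corollary~\ref{Niso3}, observe via Lemma~\ref{cofinj} that the transition maps become (levelwise, hence diagonal) cofibrations of simplicial sets, and pass to the filtered colimit. One caution on your opening parenthetical: $\diag\N\iso$ is \emph{not} a left adjoint and does not preserve pushouts in general --- $\N\iso$ is the \emph{right} adjoint of $\pi$, and its failure to commute with pushouts on the nose is exactly why Lemmas~\ref{Niso}--\ref{Niso3} are needed; what is actually used, and what you correctly invoke later in your argument, is that $\N\iso$ commutes with directed colimits along inclusions, combined with the single-cell homotopical comparison of Corollary~\ref{Niso3}.
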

  \begin{proof}
  First of all, $\A_{\bullet}\rightarrow \B_{\bullet}$ is a transfinite composition of cofibrations of the form 
  $$ \A_{\bullet}=\C^{0}\rightarrow\dots \A^{s}_{\bullet}\rightarrow\A_{\bullet}^{s+1}\rightarrow\A^{s+2}\rightarrow\dots $$
  We denote $\A^{s}_{\bullet}\sqcup_{\A_{\bullet}}\C_{\bullet}$ by $\C_{\bullet}^{s}$. 
  By the corollary \ref{Niso3}
  $$\N\iso\A_{\bullet}^{s}\sqcup_{\N\iso \A_{\bullet}}\N\iso\C_{\bullet}\rightarrow\N\iso\C_{\bullet}^{s}$$ 
  is a weak equivalence, moreover,  $\N\iso\C_{\bullet}^{s}\rightarrow\N\iso\C^{s+1}_{\bullet}$ is an inclusion of bisimplicial stes. Knowing that $\N\iso$ commutes with directed colimits, and that \\
  $\diag\N\iso\C_{\bullet}^{s}\rightarrow\diag\N\iso\C_{\bullet}^{s+1}$
  is a cofibration in $\sSet$, we conclude that :
  $$ \diag(\N\iso\B_{\bullet}\sqcup_{\N\iso \A_{\bullet}}\N\iso\C_{\bullet})\rightarrow \diag\N\iso \D_{\bullet}$$
is an equivalence in $\sSet$
   \end{proof}
   \begin{corollary}\label{Niso5}
  Let $i:\A^{'}_{\bullet}\rightarrow\B^{'}_{\bullet}$ be any cofibration in $\sCat$. 
  The functor  $\diag\N\iso$ sends the following pushout in $\sCat$: 
  $$\xymatrix{
    \A^{'}_{\bullet}\ar[r] \ar[d] ^{i} & \C_{\bullet} \ar[d] \\
    \B^{'}_{\bullet} \ar[r] & \D_{\bullet}
  }$$
  to a homotopy cocartesian square.
   \end{corollary}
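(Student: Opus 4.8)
\textbf{Proof plan for Corollary \ref{Niso5}.}

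The idea is to reduce an arbitrary cofibration to the case of an $I$-cell, which was already handled in Lemma \ref{Niso4}. First I would recall that by Lemma \ref{cofinj}, every cofibration $i:\A'_{\bullet}\rightarrow\B'_{\bullet}$ in $\sCat$ is a retract of a map $\A'_{\bullet}\rightarrow\E_{\bullet}$ belonging to $I$-$\mathrm{Cell}$, with $\A'_{\bullet}$ fixed (a standard consequence of the small object argument, using that $\sCat$ is cofibrantly generated by Theorem \ref{structure}). So there is a commutative retract diagram exhibiting $i$ as a retract of $k:\A'_{\bullet}\rightarrow\E_{\bullet}$ in the category of arrows under $\A'_{\bullet}$.

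Next I would form the two pushouts along the common map $\A'_{\bullet}\rightarrow\C_{\bullet}$: let $\D_{\bullet}=\B'_{\bullet}\sqcup_{\A'_{\bullet}}\C_{\bullet}$ and $\E'_{\bullet}=\E_{\bullet}\sqcup_{\A'_{\bullet}}\C_{\bullet}$. Since pushout along a fixed map is functorial, the retract of $i$ onto $k$ pushes forward to a retract of the induced map $\C_{\bullet}\rightarrow\D_{\bullet}$ onto $\C_{\bullet}\rightarrow\E'_{\bullet}$, compatibly with the maps to $\C_{\bullet}$ and $\B'_{\bullet}$ (resp. $\E_{\bullet}$). Applying $\diag\N\iso$, which preserves retracts and finite colimits in the relevant sense only up to the comparison maps, I get that the comparison map
$$\diag\N\iso\C_{\bullet}\sqcup^{h}_{\diag\N\iso\A'_{\bullet}}\diag\N\iso\B'_{\bullet}\longrightarrow\diag\N\iso\D_{\bullet}$$
is a retract of the corresponding comparison map for the pair $(\A'_{\bullet}\rightarrow\E_{\bullet},\ \A'_{\bullet}\rightarrow\C_{\bullet})$. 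The latter is a weak equivalence by Lemma \ref{Niso4}, and a retract of a weak equivalence in $\sSet$ is a weak equivalence; hence the square for $i$ is homotopically cocartesian. To make the retract-of-comparison-maps step precise I would note that $\diag$, $\N$, and the colimit functors all commute with retracts since retracts are a special kind of colimit, and that the homotopy pushout can be computed functorially (e.g.\ via a fixed cofibrant replacement / mapping cylinder construction), so the whole comparison square is genuinely functorial in the arrow $(\A'_{\bullet}\rightarrow\B'_{\bullet})$ relative to the fixed leg $\A'_{\bullet}\rightarrow\C_{\bullet}$.

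The main obstacle is the bookkeeping in that last step: one must be careful that "$i$ is a retract of an $I$-cell" can be arranged with the \emph{same} source $\A'_{\bullet}$ and compatibly with an arbitrary attaching map $\A'_{\bullet}\rightarrow\C_{\bullet}$, so that forming pushouts and then applying $\diag\N\iso$ really does exhibit the comparison map for $i$ as a retract (in the arrow category of $\sSet$) of the comparison map for the $I$-cell. Once the functoriality of the homotopy-pushout comparison map is set up correctly, the conclusion is immediate from Lemma \ref{Niso4} and the closure of weak equivalences under retracts; no further computation is needed.
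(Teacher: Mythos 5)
Your argument is correct and follows essentially the same route as the paper: exhibit the arbitrary cofibration $i:\A'_{\bullet}\rightarrow\B'_{\bullet}$ as a retract of an $I$-cell, push out, apply $\diag\N\iso$, and conclude by Lemma \ref{Niso4} together with the closure of weak equivalences under retracts. The one genuine refinement you make is to arrange the retract so that the source $\A'_{\bullet}$ is fixed (via the standard factor-and-lift argument), which removes the small ambiguity in the paper's proof about how the $I$-cell's source is supposed to map to $\C_{\bullet}$; the paper instead implicitly transports the attaching map along the retraction $\A_{\bullet}\rightarrow\A'_{\bullet}\rightarrow\C_{\bullet}$. Either choice works, and both reduce to the same diagram chase, so the proofs are the same in substance. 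One minor point: you phrase the comparison map in terms of a homotopy pushout, whereas Lemma \ref{Niso4} is stated for the strict pushout of the $\N\iso$-images; these agree here because (by Lemma \ref{cofinj}) $\N\iso(i)$ is a levelwise monomorphism, so $\diag\N\iso(i)$ is a cofibration of simplicial sets and the strict pushout already computes the homotopy pushout. With that observation made explicit, your bookkeeping concern at the end is resolved and the argument goes through without further input.
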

   \begin{proof}
 The cofibration $i^{'}:\A^{'}_{\bullet}\rightarrow\B^{'}_{\bullet}$  is a retract of a $I-cell$ cofibration  $i:\A_{\bullet}\rightarrow\B_{\bullet}$. We denote  $\B_{\bullet}\sqcup_{\A_{\bullet}}\C_{\bullet}=\M_{\bullet}$, and $\B^{'}_{\bullet}\sqcup_{\A^{'}}\C_{\bullet}=\D_{\bullet}$. There is an induced retract:
  $$\xymatrix{
    \N\iso\B^{'}_{\bullet}\sqcup_{\N\iso\A^{'}_{\bullet}} \N\iso\C_{\bullet} \ar[r] \ar[d] & \N\iso\B_{\bullet}\sqcup_{\N\iso\A_{\bullet}} \N\iso\C_{\bullet}\ar[d]^{t}\ar[r]  & \N\iso\B^{'}_{\bullet}\sqcup_{\N\iso\A^{'}_{\bullet}} \N\iso\C_{\bullet} \ar[d]^{g} \\
  \N\iso\D_{\bullet} \ar[r] & \N\iso\M_{\bullet} \ar[r] & \N\iso\D_{\bullet}
  }$$
 By lemma \ref{Niso4}, we have that  $\diag(t)$ is an equivalence, so  $\diag(g)$ is also an equivalence in $\sSet$.
   
   \end{proof}
  
\begin{corollary}\label{leftproper}
The model category $\sCat$ is left proper. 
\end{corollary}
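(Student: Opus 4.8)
The plan is to deduce left properness of $\sCat$ directly from Corollary \ref{Niso5} together with the definition of weak equivalences in the diagonal model structure (Theorem \ref{structure}). Recall that a model category is left proper precisely when, in any pushout square along a cofibration, the pushout of a weak equivalence is again a weak equivalence. So I would start with an arbitrary pushout square in $\sCat$
$$\xymatrix{
\A_{\bullet} \ar[r]^-{w} \ar[d]_{i} & \C_{\bullet} \ar[d] \\
\B_{\bullet} \ar[r] & \D_{\bullet}
}$$
in which $i$ is a cofibration and $w$ is a weak equivalence, and I would try to show that the induced map $\B_{\bullet} \to \D_{\bullet}$ is a weak equivalence, i.e.\ that $\diag\N\iso$ sends it to a weak equivalence in $\sSet$.

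The key reduction is to replace the given square by the homotopy pushout. Form the factorization of $w$ as a cofibration $\A_{\bullet} \to \C'_{\bullet}$ followed by a trivial fibration $\C'_{\bullet} \to \C_{\bullet}$, and compare the actual pushout $\D_{\bullet} = \B_{\bullet} \sqcup_{\A_{\bullet}} \C_{\bullet}$ with the intermediate pushout $\B_{\bullet} \sqcup_{\A_{\bullet}} \C'_{\bullet}$. First I would apply Corollary \ref{Niso5} to the pushout along the cofibration $i$ (with $\C'_{\bullet}$ in the role of $\C_{\bullet}$, and also with $\C_{\bullet}$), which tells me that $\diag\N\iso$ carries both pushout squares to homotopically cocartesian squares in $\sSet$. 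Hence on applying $\diag\N\iso$ I obtain two genuine homotopy pushout squares in $\sSet$ sharing the same map $\diag\N\iso(i)$ along a cofibration $\diag\N\iso\A_{\bullet} \to$ something; since $\sSet$ is left proper and $\diag\N\iso(w)$ is a weak equivalence (as $w$ is a weak equivalence in $\sCat$ by hypothesis), the comparison map between the two homotopy pushouts is a weak equivalence. By the gluing lemma in $\sSet$ (or directly: left properness of $\sSet$ applied twice), the map $\diag\N\iso\B_{\bullet} \to \diag\N\iso\D_{\bullet}$ is a weak equivalence, which is exactly what it means for $\B_{\bullet} \to \D_{\bullet}$ to be a weak equivalence in $\sCat$.

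Concretely, the cleanest route is the gluing lemma: given the map of spans
$(\diag\N\iso\B_{\bullet} \leftarrow \diag\N\iso\A_{\bullet} \to \diag\N\iso\C'_{\bullet}) \to (\diag\N\iso\B_{\bullet} \leftarrow \diag\N\iso\A_{\bullet} \to \diag\N\iso\C_{\bullet})$
which is the identity on the left two legs and $\diag\N\iso(w)$ on the right, with the left legs cofibrations, the induced map on pushouts is a weak equivalence. Corollary \ref{Niso5} identifies those pushouts (up to weak equivalence) with $\diag\N\iso$ applied to the pushouts in $\sCat$, and one of the latter is $\D_{\bullet}$ while the other maps to $\C_{\bullet}$ by a weak equivalence; chasing the resulting commutative diagram and using the two-out-of-three property finishes the argument.

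The main obstacle is bookkeeping rather than conceptual: one must make sure that the hypotheses of Corollary \ref{Niso5} genuinely apply — that is, that the cofibrations produced by the factorization are still cofibrations in $\sCat$ satisfying the degreewise hypotheses of Lemma \ref{Niso} (fully faithful inclusions of groupoids at each level), which is guaranteed by Lemma \ref{cofinj} and the retract argument already in place, and that $\N\iso$ commutes with the relevant colimits so that the pushouts in $\sSet$ and in $\sCat$ really match up after applying $\diag\N\iso$. Once those compatibilities are lined up, left properness is a formal consequence of left properness of $\sSet$ via the gluing lemma.
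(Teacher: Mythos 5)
Your proposal is correct and rests on exactly the same two ingredients the paper uses: Corollary \ref{Niso5} to know that $\diag\N\iso$ carries the pushout square in $\sCat$ to a homotopy cocartesian square in $\sSet$, and left properness of $\sSet$. However, your detour through factoring $w$ as a cofibration followed by a trivial fibration is unnecessary and adds steps (you then have to separately justify that $\B_{\bullet}\to\B_{\bullet}\sqcup_{\A_{\bullet}}\C'_{\bullet}$ is a weak equivalence as a pushout of a trivial cofibration, and compare two pushouts via the gluing lemma). The paper does not factor at all: it applies $\diag\N\iso$ once to the original pushout, notes $\diag\N\iso(i)$ is a cofibration in $\sSet$ (via Lemma \ref{cofinj}), forms the ordinary pushout in $\sSet$, invokes left properness of $\sSet$ to see the pushed-out map $g$ is a weak equivalence, and then uses \ref{Niso5} to conclude the comparison map $t$ to $\diag\N\iso\D_{\bullet}$ is a weak equivalence, so $h=t\circ g$ is one. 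Your version is fine once all the compatibilities you mention are checked, but the direct route is cleaner and is what the paper does.
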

\begin{proof}
Let $\A_{\bullet}\rightarrow\B_{\bullet}$ be a cofibration and let  $f:\A_{\bullet}\rightarrow \C_{\bullet}$ be an equivalence in $\sCat$.
It is sufficient to consider the following pushout :
  $$\xymatrix{
  \diag\N\iso\A_{\bullet} \ar[r] ^{f}_{\sim}\ar[d]^{i} & \diag\N\iso\C_{\bullet} \ar[d]^{s}\ar@/^/[rdd] ^{l} \\
    \diag\N\iso\B_{\bullet}\ar[r]^-{g}  \ar@/_/[rrd]^{h}  & \diag(\N\iso\B_{\bullet}\sqcup_{\N\iso\A_{\bullet}}\N\iso\C_{\bullet} ) \ar@{.>}[rd]^{t} \\
     & &  \diag\N\iso\D_{\bullet}
  }$$
  We have that $i$is a cofibration in $\sSet$ since $\N\iso\A\rightarrow\N\iso\B$ is injective in $\sSet^{2}$. But $f$ is an equivalence and $\sSet$ is proper, this implies that $g$ is an equivalence.
By the corollary \ref{Niso5}, $t$ is a weak equivalence, this implies that $h$ is a weak equivalence and finally that $\sCat$ is left proper. 
\end{proof}

    \begin{lemma}
    The model category $\sCat$ is right proper.
    \end{lemma}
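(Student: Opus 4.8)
The plan is to verify right properness directly from the characterization of fibrations and weak equivalences in Theorem \ref{structure}, namely that both are detected by applying $\diag\N\iso$. Concretely, suppose we are given a pullback square in $\sCat$
$$\xymatrix{
\A_{\bullet}\times_{\C_{\bullet}}\D_{\bullet} \ar[r] \ar[d] & \D_{\bullet} \ar[d]^{p} \\
\A_{\bullet} \ar[r]^{f} & \C_{\bullet}
}$$
in which $p$ is a fibration and $f$ is a weak equivalence; we must show the top horizontal arrow is a weak equivalence. The first step is to observe that limits in $\sCat=[\Delta^{op},\Cat]$ are computed degreewise, and that in each degree $\iso$ preserves pullbacks (the underlying groupoid of a pullback of categories along an isofibration is the pullback of the underlying groupoids — this uses that $p$ is an isofibration degreewise, which holds since $\diag\N\iso(p)$ being a Kan fibration forces $\N\iso(p_n)$ to be a fibration of simplicial sets, hence $\iso(p_n)$ a fibration of groupoids). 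Then $\N$ preserves pullbacks as a right adjoint applied levelwise. So applying $\N\iso$ to the square gives a levelwise pullback square of bisimplicial sets.

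The second step is to pass from bisimplicial sets to simplicial sets via $\diag$. Here the key point is that $\diag$ preserves pullbacks (it is computed degreewise and levelwise, so it trivially commutes with limits), so we obtain a genuine pullback square in $\sSet$
$$\xymatrix{
\diag\N\iso(\A_{\bullet}\times_{\C_{\bullet}}\D_{\bullet}) \ar[r] \ar[d] & \diag\N\iso\D_{\bullet} \ar[d]^{\diag\N\iso(p)} \\
\diag\N\iso\A_{\bullet} \ar[r]^{\diag\N\iso(f)} & \diag\N\iso\C_{\bullet}.
}$$
By Theorem \ref{structure}, $\diag\N\iso(p)$ is a Kan fibration and $\diag\N\iso(f)$ is a weak equivalence of simplicial sets. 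Since $\sSet$ with the Quillen model structure is right proper, the top arrow $\diag\N\iso(\A_{\bullet}\times_{\C_{\bullet}}\D_{\bullet})\rightarrow \diag\N\iso\D_{\bullet}$ is a weak equivalence of simplicial sets, which by definition means the top arrow of the original square is a weak equivalence in $\sCat$. This completes the argument.

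The main obstacle, and the step that deserves the most care, is establishing that $\iso$ commutes with the relevant pullbacks degreewise. In general $\iso\colon\Cat\to\Cat$ (passage to the maximal subgroupoid) does \emph{not} preserve pullbacks, but it does preserve a pullback $\A\times_{\C}\D$ whenever one of the legs is an isofibration: an isomorphism in $\A\times_{\C}\D$ is a pair of morphisms $(a,d)$ mapping to the same morphism $c$ in $\C$, and if $c$ is an isomorphism and $d$ is an isomorphism then $a$ is forced to be one too by the isofibration property... more precisely one checks both components are isomorphisms, so $\iso(\A\times_\C\D)=\iso\A\times_{\iso\C}\iso\D$. So I would isolate a small lemma: if $p\colon\D\to\C$ is an isofibration in $\Cat$, then for any $f\colon\A\to\C$ the canonical map $\iso(\A\times_{\C}\D)\to\iso\A\times_{\iso\C}\iso\D$ is an isomorphism of groupoids. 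Granting that, together with the observation that $\diag\N\iso(p)$ Kan fibration $\Rightarrow$ $p_n$ isofibration for every $n$ (via $\N\iso$ right adjoint/fibration detection in $\Cat$ with the Joyal–Tierney structure), the rest is formal from right properness of $\sSet$.
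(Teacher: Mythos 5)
Your proof is correct and follows the same basic line as the paper: apply $\diag\N\iso$ to the pullback square, check that the result is still a pullback, and invoke right properness of $\sSet$. The difference is that you insert a detour which is both unnecessary and founded on a false premise. You claim that $\iso\colon\Cat\to\Cat$ (maximal subgroupoid) does not in general preserve pullbacks and therefore needs the isofibration hypothesis; in fact $\iso$ preserves \emph{all} limits unconditionally. Abstractly, $\iso$ is right adjoint to the endofunctor of $\Cat$ that localizes a category at all of its morphisms, so it preserves limits for formal reasons. Concretely, a morphism $(\alpha,\delta)$ of $\A\times_{\C}\D$ is invertible if and only if both $\alpha$ and $\delta$ are invertible, because $f(\alpha^{-1})=f(\alpha)^{-1}=g(\delta)^{-1}=g(\delta^{-1})$ shows the componentwise inverse already lives in the pullback; hence $\iso(\A\times_{\C}\D)=\iso\A\times_{\iso\C}\iso\D$ with no hypothesis on $p$. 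Accordingly, the preliminary step where you argue that each $p_n$ is an isofibration from $\diag\N\iso(p)$ being a Kan fibration, while true, serves no purpose. The paper's own proof simply records that $\diag\N\iso$ commutes with limits (each of $\diag$, $\N$, $\iso$ is a right adjoint or a composite of such) and concludes immediately; the substance of your argument is the same, but the extra lemma and the claim motivating it should be removed.
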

    
    \begin{proof}
    This fact is much more easier that the left properness.  Consider the pullback diagram in $\sCat$:
    $$\xymatrix{
    \C_{\bullet}\times_{\mathbf{A}}\D_{\bullet}\ar[r]^-{i}\ar[d]  & \C_{\bullet}\ar@{>>}[d]_-{f}     \\
    \D_{\bullet} \ar[r]_-{j}^{\thicksim} & \mathbf{A}_{\bullet}
  }$$
  Our goal is to show that  $i$ is a weak equivalence. Applying the functor $\diag~\N \iso$ which commutes with limits, we  obtain a pullback diagram in $\sSet$, such that
    $\diag\N\iso (f)$ is a fibration by definition of model structure on $\sCat$ . Since $\sSet$ is right proper, we conclude that $\diag~\N\iso (i)$ is also a weak equivalence, and so $\sCat$ is right proper.
        \end{proof}

     %cellularité

    \subsection{Cellularity of $\sCat$}
    
    This paragraph is an other step of our comprehension of cofibrations in the model category $\sCat$. We will prove the \textit{cellularity} property. This step is crucial in order to consider the left Bousfield localization and stabilization of  the model category $\sCat.$ 
    \begin{definition}\label{cell}
    A cofibrantly generated model category is cellular if it verifies the following conditions:
    
    \begin{enumerate}
    \item the domains and codomains of $\mathrm{I}$ are compact (cf \cite{Hirs} 11.4.1);
    \item the domain of the generating acyclic cofibrations $\mathrm{J}$ are small with respect to $ \mathrm{I}$  (cf \cite{Hirs} 10.5.12); and
    \item the cofibrations are effective monomorphisms (cf \cite{Hirs} 10.9.1).
    \end{enumerate}
    \end{definition}

     %condition 1 de cellularité
     
      \begin{lemma}
      The cofibrations and acyclic cofibrations of $\sCat$ verify the first hypothesis of the definition \ref{cell}.
      \end{lemma}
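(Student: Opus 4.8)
The first condition of Definition \ref{cell} requires the domains and codomains of the generating cofibrations $\mathrm{I}$ of $\sCat$ to be compact in the sense of \cite{Hirs}, 11.4.1; I would prove it simultaneously for the generating acyclic cofibrations $\mathrm{J}$, the argument being identical. By Theorem \ref{structure} the objects in question are precisely the simplicial categories $\pi_{\bullet}d_{\ast}(K)$ with $K$ running over the finite simplicial sets $\partial\Delta^{n}$, $\Delta^{n}$ and $\Lambda^{n}_{i}$. Recall that $W$ is compact when there is a cardinal $\gamma$ such that, for every relative $\mathrm{I}$-cell complex $X\rightarrow Y$, every map $W\rightarrow Y$ factors through a subcomplex $X\rightarrow Y'\rightarrow Y$ obtained by attaching at most $\gamma$ cells. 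The plan is to exploit the adjunction $\pi_{\bullet}d_{\ast}\dashv\diag\N\iso$ underlying Theorem \ref{structure}, together with local presentability of $\sCat$.

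The first step is the reduction to $\sSet$. Given a relative $\mathrm{I}$-cell complex $X=X_{0}\rightarrow X_{1}\rightarrow\cdots\rightarrow X_{\beta}\rightarrow\cdots\rightarrow Y$, Lemma \ref{cofinj} shows that each map in the filtration is an inclusion, and since $\diag$ preserves all colimits while $\N\iso$ preserves filtered colimits (a fact used throughout Section \ref{section2}, e.g.\ in \ref{Niso4}), $\diag\N\iso Y=\colim_{\beta}\diag\N\iso X_{\beta}$ is an increasing union of simplicial subsets. By adjunction $\Hom_{\sCat}(\pi_{\bullet}d_{\ast}K,Y)\cong\Hom_{\sSet}(K,\diag\N\iso Y)$, and as $K$ has only finitely many nondegenerate simplices its image lands in some $\diag\N\iso X_{\beta}$; hence $\pi_{\bullet}d_{\ast}K\rightarrow Y$ factors through the subcomplex $X_{\beta}$. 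This already yields compactness relative to the stages of the filtration, but not yet the uniform bound $\gamma$, since $X_{\beta}$ may involve arbitrarily many cells.

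To get a uniform bound I would use that $\Cat$ is locally finitely presentable, hence so is $\sCat=[\Delta^{op},\Cat]$. The functor $\pi_{\bullet}d_{\ast}\colon\sSet\rightarrow\sCat$ is a left adjoint, so it is accessible: there is a regular cardinal $\lambda$ with $\pi_{\bullet}d_{\ast}$ preserving $\lambda$-filtered colimits, and since $\partial\Delta^{n}$, $\Delta^{n}$, $\Lambda^{n}_{i}$ are finitely presentable in $\sSet$ their images $\pi_{\bullet}d_{\ast}(K)$ are $\lambda$-presentable in $\sCat$. Fix a regular cardinal $\kappa>\lambda$. A routine transfinite closure argument — using that the cell domains $\pi_{\bullet}d_{\ast}(\partial\Delta^{n})$ are themselves $\lambda$-presentable, so their attaching maps factor through subcomplexes with fewer than $\lambda$ cells, together with the regularity of $\kappa$ — shows that every relative $\mathrm{I}$-cell complex $X\rightarrow Y$ is the $\kappa$-filtered colimit of its subcomplexes built from fewer than $\kappa$ cells. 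A $\lambda$-presentable object is $\kappa$-presentable, so $\Hom_{\sCat}(\pi_{\bullet}d_{\ast}K,-)$ commutes with this colimit and every map $\pi_{\bullet}d_{\ast}K\rightarrow Y$ factors through a subcomplex with fewer than $\kappa$ cells. Hence each $\pi_{\bullet}d_{\ast}K$ is $\kappa$-compact, in particular compact, which is the first hypothesis of \ref{cell}. This is the same scheme by which $\sSet$ and $\Top$ are shown to be cellular in \cite{Hirs}.

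The genuinely delicate point is the last one: extracting a \emph{uniform} cardinal $\kappa$ governing the size of the subcomplex of $Y$ through which the map factors. The reduction to $\sSet$ in the second step only produces a factorization through \emph{some} stage of the filtration, whose cell count grows with $Y$; converting a finite sub-simplicial-set of $\diag\N\iso Y$ into a uniformly small subcomplex of $Y$ is exactly what the presentability bookkeeping does. An alternative, more hands-on route would replace that bookkeeping by the explicit description of the cells of $\sCat$ obtained in Section \ref{section2} (each cell contributing, after $\diag\N\iso$, only a controlled amount of new data), but the local-presentability argument is cleaner and is the one I would carry out.
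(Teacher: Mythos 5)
Your opening move---transferring the factorization problem across the adjunction $\pi_{\bullet}d_{\ast}\dashv\diag\N\iso$, using Lemma \ref{cofinj} to see that $\diag\N\iso$ carries the $I$-cell filtration in $\sCat$ to a transfinite composition of monomorphisms of simplicial sets, and invoking compactness of the finite simplicial sets $\Delta^{n},\partial\Delta^{n},\Lambda^{n}_{i}$ in $\sSet$---is precisely the paper's proof, and the paper ends there, immediately concluding that $g$ factors through a sub-complex of $f$. Where you diverge is in refusing to accept what the paper leaves implicit: that a factorization of the adjoint $g''\colon X_{\bullet}\rightarrow\diag\N\iso\D'_{\bullet}$ through a sub-simplicial-set of bounded size converts into a factorization of $g$ through an $I$-subcomplex of $f$ of bounded size. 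You are right that this correspondence between sub-complexes on the two sides of the adjunction is not free, and the paper does not unpack it; your replacement, producing the uniform cardinal $\kappa$ directly from local presentability of $\sCat=[\Delta^{op},\Cat]$ and accessibility of $\pi_{\bullet}d_{\ast}$, is a genuinely different second half and is a sound way to fill that gap. The cost is that your ``routine transfinite closure argument,'' which realizes every relative $I$-cell complex as a $\kappa$-filtered colimit of its sub-complexes of size $<\kappa$, quietly uses that unions and intersections of sub-complexes behave well---exactly the package Hirschhorn derives from cofibrations being effective monomorphisms, i.e.\ hypothesis (3) of Definition \ref{cell}, which in this paper is proved only two lemmas later. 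There is no real circularity (that lemma's proof does not depend on this one), but you should either cite it explicitly, or point out that by Lemma \ref{cofinj} the cells of $\sCat$ are degreewise inclusions of categories, which already gives the needed intersection lemmas directly; as written, the dependence on the sub-complex calculus is hidden. In short: same first step as the paper, a more abstract but more carefully justified second step, at the price of an unacknowledged appeal to sub-complex properties that the paper establishes separately.
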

      \begin{proof}
      Let $\C_{\bullet}$ a domain (codomain) of an element in $\mathrm{I}$. By definition $\C_{\bullet}$ has the form $\pi~ d_{\ast}X_{\bullet}$, where $d_{\ast}$ is the left adjoint to $\diag.$
      Let $f~: \D_{\bullet}\rightarrow \D^{'}_{\bullet}$ be a $\mathrm{I}$-cell complex.We have to show that any morphism $g:~\C_{\bullet}\rightarrow \D^{'}_{\bullet}$ is factorized through a sub complex of $f$ for a certain cardinal  $\gamma$. The morphism $f$ is a transfinite composition of elements of $\mathrm{I}-cell$ which are inclusions categories degree by degree
      $$ \D_{\bullet}\rightarrow \D_{\bullet}^{1}\dots \D_{\bullet}^{\beta}\rightarrow\D_{\bullet}^{\beta+1}\dots\rightarrow \D_{\bullet}^{'}.$$
      The factorization  $g$ by a sub complex of $f$ is equivalent to factorization of the adjoint of  $g$, denoted by,  $g^{'}~:d_{\ast}X_{\bullet}\rightarrow \N\iso \D_{\bullet}^{'}$ by a sub complex of 
      $$ \N\iso\D_{\bullet}\rightarrow\N\iso\D_{\bullet}^{1}\dots \N\iso\D_{\bullet}^{\beta}\rightarrow \N\iso \D_{\bullet}^{\beta+1}\dots\rightarrow \N\iso\D_{\bullet}^{'}$$
     By the same argument, a factorization of $g^{'}$ is equivalent to factorization of the adjoint map $g^{''}: X_{\bullet}\rightarrow
\diag \N\iso \D_{\bullet}^{'}$ by a sub complexe 
       $$ \diag \N\iso\D_{\bullet}\rightarrow \diag \N\iso\D_{\bullet}^{1}\dots \diag \N\iso\D_{\bullet}^{\beta}\rightarrow \diag \N\iso\D_{\bullet}^{\beta+1}\dots\rightarrow \diag \N\iso\D_{\bullet}^{'}$$
      which is a transfinite composition of monomorphisms in $\sSet$, since by \ref{cofinj} the cofibrations in $\sCat$ are inclusions of categories degree by degree.
      But the objects $\Delta^{n}$ and $\partial\Delta^{n}$ are compact in $\sSet$. We conclude that  $g$ has a factorization through a sub complex of $f$.
      
      \end{proof}

      %condition 2 cellularité
      
      \begin{lemma}
   The cofibrations and acyclic cofibrations in $\sCat$ the second hypothesis of the definition \ref{cell}.
      \end{lemma}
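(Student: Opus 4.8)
The plan is to reduce the second cellularity condition --- that the domains of the generating acyclic cofibrations $\mathrm{J}$ are small with respect to $\mathrm{I}$ --- to the corresponding smallness statement in $\sSet$ via the adjoint functors, exactly as in the proof of the previous lemma. First I would recall that every generating acyclic cofibration of $\sCat$ has the form $\pi_{\bullet} d_{\ast}(j)$ for $j$ a generating acyclic cofibration of $\sSet$, so its domain is of the form $\pi_{\bullet} d_{\ast} \Lambda^{n}_{i}$ (more precisely a disjoint union of $\pi(C_{\beta})$'s as in Lemma \ref{cofgenscat}); likewise the elements of $\mathrm{I}$ are the $\pi_{\bullet} d_{\ast}(i)$ for $i$ a generating cofibration of $\sSet$, and by Lemma \ref{cofinj} each $\mathrm{I}$-cell is, degreewise, an inclusion of categories, hence $\N\iso$ of an $\mathrm{I}$-cell is a transfinite composition of (degreewise) monomorphisms of bisimplicial sets, and $\diag\N\iso$ of an $\mathrm{I}$-cell is a transfinite composition of monomorphisms of simplicial sets.

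Next I would run the adjunction argument. Given an $\mathrm{I}$-cell complex $f : \D_{\bullet} \to \D'_{\bullet}$, presented as a transfinite composition $\D_{\bullet} = \D^{0}_{\bullet} \to \cdots \to \D^{\beta}_{\bullet} \to \D^{\beta+1}_{\bullet} \to \cdots$, a map $g$ out of a domain $\C_{\bullet} = \pi_{\bullet} d_{\ast} X_{\bullet}$ of $\mathrm{J}$ factors through a stage $\D^{\beta}_{\bullet}$ if and only if its triple adjoint $g'' : X_{\bullet} \to \diag\N\iso \D'_{\bullet}$ factors through $\diag\N\iso \D^{\beta}_{\bullet}$, since $\pi_{\bullet} d_{\ast} \dashv \diag\N\iso$ and $\diag\N\iso$ commutes with the relevant directed colimits (the functor $\N\iso$ is applied levelwise and preserves directed colimits, and $\diag$, being a left adjoint, does too). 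Now $X_{\bullet}$ is $\Lambda^{n}_{i}$ or the disjoint union $\bigsqcup_{\beta} C_{\beta}$; these are finite simplicial sets, in particular small (indeed compact) with respect to all monomorphisms of $\sSet$. Since the tower $\diag\N\iso \D^{0}_{\bullet} \to \diag\N\iso \D^{1}_{\bullet} \to \cdots$ is a transfinite composition of monomorphisms, smallness of $X_{\bullet}$ gives the desired factorization of $g''$, hence of $g$. This proves that the domains of $\mathrm{J}$ are small with respect to $\mathrm{I}$.

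The only genuinely delicate point, and where I would spend the most care, is the commutation of $\diag\N\iso$ with the transfinite colimits defining $\mathrm{I}$-cell complexes: one must check that colimits in $\sCat$ are computed degreewise (already used repeatedly above), that $\iso : \Cat \to \Cat$ preserves filtered colimits along inclusions of categories (true because a filtered colimit of categories is computed on objects and morphisms separately, and isomorphisms are detected levelwise), and that $\N$ and $\diag$ preserve filtered colimits (both are left adjoints, or can be checked directly simplex by simplex). Granting this, the argument is a routine transcription of the smallness argument for $\mathrm{I}$ given in the previous lemma, with $\Lambda^{n}_{i}$ replacing $\Delta^{n}$ and $\partial\Delta^{n}$.

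\begin{proof}
Every element of $\mathrm{J}$ is of the form $\pi_{\bullet} d_{\ast}(j)$ with $j$ a generating acyclic cofibration of $\sSet$, so a domain $\C_{\bullet}$ of an element of $\mathrm{J}$ has the form $\pi_{\bullet} d_{\ast} X_{\bullet}$ with $X_{\bullet}$ a horn $\Lambda^{n}_{i}$, which is a finite (hence compact) simplicial set. Let $f : \D_{\bullet} \to \D'_{\bullet}$ be an $\mathrm{I}$-cell complex, written as a transfinite composition
$$\D_{\bullet} \to \D^{1}_{\bullet} \to \cdots \to \D^{\beta}_{\bullet} \to \D^{\beta+1}_{\bullet} \to \cdots \to \D'_{\bullet}.$$
By Lemma \ref{cofinj} each map in this tower is degreewise an inclusion of categories, so applying $\diag\N\iso$ yields a transfinite composition of monomorphisms of simplicial sets
$$\diag\N\iso\D_{\bullet} \to \diag\N\iso\D^{1}_{\bullet} \to \cdots \to \diag\N\iso\D^{\beta}_{\bullet} \to \cdots \to \diag\N\iso\D'_{\bullet},$$
and since colimits in $\sCat$ are computed degreewise, $\N\iso$ is applied levelwise and preserves directed colimits, and $\diag$ is a left adjoint, the functor $\diag\N\iso$ carries the transfinite colimit $\D'_{\bullet} = \colim_{\beta} \D^{\beta}_{\bullet}$ to the transfinite colimit $\colim_{\beta} \diag\N\iso\D^{\beta}_{\bullet}$. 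Given $g : \C_{\bullet} \to \D'_{\bullet}$, the adjunction $\pi_{\bullet} d_{\ast} \dashv \diag\N\iso$ shows that $g$ factors through some stage $\D^{\beta}_{\bullet}$ if and only if its adjoint $g'' : X_{\bullet} \to \diag\N\iso\D'_{\bullet} = \colim_{\beta}\diag\N\iso\D^{\beta}_{\bullet}$ factors through $\diag\N\iso\D^{\beta}_{\bullet}$. Because $X_{\bullet} = \Lambda^{n}_{i}$ is small with respect to monomorphisms of $\sSet$, such a $\beta$ exists. Hence the domains of $\mathrm{J}$ are small with respect to $\mathrm{I}$, which is the second condition of Definition \ref{cell}.
\end{proof}
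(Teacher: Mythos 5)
Your proof is correct and follows essentially the same route as the paper: factor through the adjunction $\pi_{\bullet}d_{\ast} \dashv \diag\N\iso$, and use that $\diag$ commutes with colimits and $\N\iso$ commutes with directed colimits so the problem reduces to smallness of $X_{\bullet}$ in $\sSet$. The only (harmless) variation is that where you invoke Lemma \ref{cofinj} to see the tower consists of monomorphisms and then use compactness of the finite simplicial set $\Lambda^{n}_{i}$, the paper instead appeals directly to the general fact that every object of $\sSet$ is $\lambda$-small for some cardinal $\lambda$, so it does not need the monomorphism observation at all.
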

      \begin{proof}
      Let $\pi~d_{\ast}X_{\bullet}$ be a domain of an element of $\mathrm{J}$.
      We have to show that this domain is small relatively to $\mathrm{I}$-cell for a certain cardinal $\lambda$.
      We have the following isomorphisms:
      $$ \colim_{\beta<\lambda}\homs_{\sCat}(\pi d_{\ast}X_{\bullet}, \D_{\bullet}^{\beta})\rightarrow \colim_{\beta<\lambda}\homs_{\sSet}(X_{\bullet}, \diag \N\iso\D_{\bullet}^{\beta})$$
      $$\colim_{\beta<\lambda}\homs_{\sSet}(X_{\bullet}, \diag \N \iso\D_{\bullet}^{\beta})\rightarrow\homs_{\sSet}(X_{\bullet}, \diag \N \iso ~ \colim_{\beta<\lambda} \D_{\bullet}^{\beta})$$
      The first isomorphism is by adjunction, the second isomorphism is a consequence of the fact that all object in $\sSet$ are small for a  $\lambda$ (cf \cite{Hovey} lemme 3.1.1), and the functor $\diag$ 
      commutes with colimits  and that the functor $\N\iso$ commutes with directed colimits.
      \end{proof}

      %condition 3 cellularité

      \begin{lemma}
      The cofibrations in $\sCat$ are effective monomorphisms.
      \end{lemma}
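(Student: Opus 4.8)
The plan is to show that for a cofibration $i:\A_\bullet \to \B_\bullet$ in $\sCat$, the square
$$\xymatrix{
\A_\bullet \ar[r] \ar[d]_i & \B_\bullet \ar[d] \\
\B_\bullet \ar[r] & \B_\bullet \sqcup_{\A_\bullet} \B_\bullet
}$$
is a pullback, i.e. $\A_\bullet$ is the equalizer of the two canonical maps $\B_\bullet \rightrightarrows \B_\bullet \sqcup_{\A_\bullet}\B_\bullet$; this is exactly the definition of an effective monomorphism (\cite{Hirs} 10.9.1). Since both colimits and limits in $\sCat = [\Delta^{op},\Cat]$ are computed degreewise, it suffices to prove the statement degreewise, i.e. to show that every cofibration in $\Cat$ (with the Joyal--Tierney structure, so: a functor injective on objects) is an effective monomorphism in $\Cat$. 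Thus the whole lemma reduces to a purely $1$-categorical fact about $\Cat$, and then an application of \ref{cofinj} to pass from generating cofibrations to all cofibrations.

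For the degreewise statement, I would argue as follows. Let $i:\mathcal A \to \mathcal B$ be a functor that is injective on objects (and hence, by \ref{cofinj}, we may also take it injective on morphisms, since the cofibrations in $\sCat$ are inclusions of categories degreewise). Form the pushout $\mathcal B \sqcup_{\mathcal A} \mathcal B$ in $\Cat$. Because $i$ is an inclusion on objects and morphisms, the pushout can be described concretely: its objects are $\Ob\mathcal B \sqcup_{\Ob\mathcal A}\Ob\mathcal B$ and its morphisms are generated by the two copies of the morphisms of $\mathcal B$ glued along the common morphisms of $\mathcal A$, with composites of the evident alternating zig-zags; crucially, a zig-zag that starts and ends at objects in the image of one copy of $\mathcal B$ and factors through the ``wrong'' copy must pass through objects of $\mathcal A$, and the relations force such a zig-zag to already lie in the image of $\mathcal A$. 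From this description one checks directly that the equalizer of the two inclusions $j_1, j_2 : \mathcal B \rightrightarrows \mathcal B \sqcup_{\mathcal A}\mathcal B$ is precisely the image of $\mathcal A$: an object $b$ of $\mathcal B$ satisfies $j_1(b)=j_2(b)$ iff $b \in \Ob\mathcal A$, and similarly a morphism $f$ of $\mathcal B$ satisfies $j_1(f)=j_2(f)$ iff $f$ is a morphism of $\mathcal A$. Hence $\mathcal A \to \mathcal B$ is the equalizer, i.e. an effective monomorphism in $\Cat$.

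Assembling the pieces: applying the above in each simplicial degree $n$, and using that the two maps $\B_\bullet \rightrightarrows \B_\bullet\sqcup_{\A_\bullet}\B_\bullet$ in $\sCat$ are just the degreewise maps, we get that $\A_n$ is the equalizer of $\B_n \rightrightarrows (\B_\bullet\sqcup_{\A_\bullet}\B_\bullet)_n$ for each $n$; since equalizers in $\sCat$ are computed degreewise, $\A_\bullet$ is the equalizer of $\B_\bullet \rightrightarrows \B_\bullet\sqcup_{\A_\bullet}\B_\bullet$, so $i$ is an effective monomorphism in $\sCat$. Finally, one notes the conclusion holds for an arbitrary cofibration, not merely a generating one, because by \ref{cofinj} every cofibration is an inclusion of categories degreewise, which is all that was used.

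The main obstacle is the concrete analysis of the pushout $\mathcal B\sqcup_{\mathcal A}\mathcal B$ in $\Cat$ and verifying that no new identifications of objects or morphisms of $\mathcal B$ with each other are created when one glues along $\mathcal A$ and closes up under composition — i.e. controlling the zig-zag relations well enough to identify the equalizer exactly. Once one is comfortable that a free gluing along an inclusion does not collapse anything outside $\mathcal A$, the rest is formal bookkeeping with degreewise (co)limits. One should also be slightly careful that ``effective monomorphism'' in the sense of \cite{Hirs} 10.9.1 is the equalizer-of-its-cokernel-pair condition and that this is what is verified; no appeal to exactness of $\Cat$ beyond the existence of the relevant finite (co)limits is needed.
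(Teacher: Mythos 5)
Your proof is correct and follows essentially the same strategy as the paper: both verify directly that a cofibration $i:\A_\bullet\to\B_\bullet$ is the equalizer of the two legs $\B_\bullet\rightrightarrows\B_\bullet\sqcup_{\A_\bullet}\B_\bullet$ of its cokernel pair, relying on Lemma \ref{cofinj} that cofibrations (and hence also the two legs, being pushouts of $i$) are degreewise inclusions of categories. The paper handles the object level by noting that $\Ob:\sCat\to\sSet$ commutes with limits and colimits and invoking cellularity of $\sSet$, whereas you argue the object pushout concretely; for morphisms the paper appeals tersely to the injectivity of $i_1,i_2$, while you flag more carefully the point — which the paper leaves somewhat implicit — that one must actually check that amalgamating two copies of $\B_\bullet$ along $\A_\bullet$ creates no identifications of morphisms outside the image of $\A_\bullet$; this is the genuine technical content in both arguments.
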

      \begin{proof}
    Let $\xymatrix{ \C_{\bullet}  \ar@{^{(}->}[r]^-{i}  & \D_{\bullet}}$ be a  cofibration in $\sCat$ (in particular an inclusion of categories). Now, we compute the equalizer of the following diagram:
      $$\xymatrix{
\D_{\bullet}\ar@<1ex>[r] \ar[r] & \D_{\bullet}\bigsqcup_{\C_{\bullet}} \D_{\bullet}
}$$
where both morphisms are inclusion of categories coming from the pushout diagram:
   $$\xymatrix{
 \C_{\bullet} \ar@{^{(}->}[r]^-{i}\ar@{^{(}->}[d]^-{i} & \D_{\bullet}\ar@{^{(}->}[d]^{i_{1}}\\
  \D_{\bullet}\ar@{^{(}->}[r] _{i_{2}}& \D_{\bullet}\bigsqcup_{\C_{\bullet}} \D_{\bullet}.
     }$$
     We affirm that the equalizer is given exactly by  
     $$\xymatrix{
\C_{\bullet}\ar[r]^-{i} & \D_{\bullet}\ar@<1ex>[r]\ar[r] & \D_{\bullet}\bigsqcup_{\C_{\bullet}} \D_{\bullet}
}$$
First of all, it is a commutative diagram. Suppose that $\C^{'}_{\bullet}$ is an other condidat for the equalizer. Since the functor $\Ob:\sCat\rightarrow\sSet$ commutes with limits and colimits, there exists a unique morphism $t$ making the following diagram commuting:
$$\xymatrix{
\Ob\C_{\bullet}^{'}\ar@{.>}[d]^{t} \ar[rd]^{\Ob(F)} & \\
\Ob\C_{\bullet}\ar[r]^-{\Ob(i)}& \Ob\D_{\bullet}\ar@<1ex>[r] \ar[r] & \Ob\D_{\bullet}\bigsqcup_{\Ob\C_{\bullet}} \Ob\D_{\bullet}
}$$
In fact, the cofibrations in $\sCat$ are injective on objects \ref{cofinj}, and $\sSet$ is cellular \cite{Hirs}.
Suppose now $\gamma$ a morphism in teh category $\C^{'}_{\bullet}$ such that$i_{1}F(\gamma)=i_{2}F(\gamma)$. Since $i_{1}:\C_{\bullet}\rightarrow\D_{\bullet}\bigsqcup_{\C_{\bullet}}\D_{\bullet}$ and
$i_{2}:\C_{\bullet}\rightarrow\D_{\bullet}\bigsqcup_{\C_{\bullet}}\D_{\bullet}$ are injections of categories, it implies that  $F(\gamma)$ is, in fact, a morphism in $\C_{\bullet}$.
We conclude that any morphism $F:\C^{'}_{\bullet}\rightarrow \D_{\bullet}$ in $\sCat$ such that$i_{1}F=i_{2}F$ has a unique factorization as a composition
$$ \C^{'}_{\bullet}\rightarrow \C_{\bullet}\rightarrow \D_{\bullet}.$$
\end{proof}
      
 \begin{corollary}\label{cellsacat}
 Equipped with the model structure \ref{structure}, The category $\sCat$ is cellular.
 \end{corollary}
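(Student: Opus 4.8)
The plan is to assemble Corollary \ref{cellsacat} directly from the three conditions in Definition \ref{cell}, each of which has just been verified by a separate lemma. Recall that a cofibrantly generated model category is cellular precisely when (1) the domains and codomains of the generating cofibrations $\mathrm{I}$ are compact, (2) the domains of the generating acyclic cofibrations $\mathrm{J}$ are small with respect to $\mathrm{I}$, and (3) the cofibrations are effective monomorphisms. So the proof is essentially a bookkeeping argument: point to the lemma establishing each clause.

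\begin{proof}[Proof of Corollary \ref{cellsacat}]
By Theorem \ref{structure}, $\sCat$ equipped with the diagonal model structure is cofibrantly generated, with generating cofibrations $\mathrm{I}=\pi_{\bullet}d_{\ast}(\mathrm{I}_{\sSet})$ and generating acyclic cofibrations $\mathrm{J}=\pi_{\bullet}d_{\ast}(\mathrm{J}_{\sSet})$. We check the three conditions of Definition \ref{cell}. First, the domains and codomains of $\mathrm{I}$ (and of $\mathrm{J}$) are compact: this is exactly the content of the lemma above, which reduces compactness in $\sCat$, via the adjunction $(\pi_{\bullet}d_{\ast},\diag\N\iso)$ and the fact (Lemma \ref{cofinj}) that cofibrations in $\sCat$ are degreewise inclusions of categories, to the compactness of $\Delta^{n}$ and $\partial\Delta^{n}$ in $\sSet$. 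Second, the domains of $\mathrm{J}$ are small with respect to $\mathrm{I}$: this is the second lemma above, again obtained by transporting the smallness of the objects of $\sSet$ (\cite{Hovey}, Lemma 3.1.1) along the same adjunction, using that $\diag$ commutes with all colimits and $\N\iso$ with directed ones. Third, the cofibrations of $\sCat$ are effective monomorphisms: this is the third lemma above, where for a cofibration $i\colon\C_{\bullet}\hookrightarrow\D_{\bullet}$ one computes the equalizer of $\D_{\bullet}\rightrightarrows\D_{\bullet}\sqcup_{\C_{\bullet}}\D_{\bullet}$ and identifies it with $\C_{\bullet}$, using that $\Ob\colon\sCat\to\sSet$ preserves limits and colimits, that cofibrations are injective on objects (Lemma \ref{cofinj}), and the cellularity of $\sSet$. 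Since all three conditions of Definition \ref{cell} hold, $\sCat$ is cellular.
\end{proof}

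The only step that carries any real mathematical weight has already been done in the preceding lemmas; the main subtlety in them — and hence the place where one must be careful in reading this corollary — is the effective-monomorphism condition, since it genuinely uses the structural description of cofibrations as degreewise inclusions of categories together with the good behavior of the object-functor $\Ob$, rather than a formal transport along the adjunction. The compactness and smallness clauses, by contrast, are routine consequences of the corresponding facts in $\sSet$ once one knows (from Lemma \ref{cofinj}) that $\mathrm{I}$-cell maps are degreewise monomorphisms of simplicial sets after applying $\diag\N\iso$. Thus the corollary is a genuine corollary, and I would keep its proof to the three-line verification above.
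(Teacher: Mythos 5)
Your proof is correct and follows exactly the paper's strategy: the corollary is stated immediately after three lemmas, each of which verifies one of the three clauses of Definition \ref{cell}, and the paper leaves the assembly implicit whereas you spell it out. No gap, no divergence.
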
  
  \begin{remark}
 At this stage, we should remark that the model category on $\Cat$  constructed by A. Joyal and refined by C. Rezk is not a cellular. In order to show the non cellularity of  $\Cat$, we consider the following example where $\C\rightarrow \D$ is a cofibration in $\Cat$ i.e.,    $$  \def\dar[#1]{\ar@<2pt>[#1]\ar@<-2pt>[#1]}
  \xymatrix{ a \dar[r] & b }$$
  and $\D$ the category with two objects $$a\rightarrow b$$
  In this case, the equalizer 
  $$  \def\dar[#1]{\ar@<2pt>[#1]\ar@<-2pt>[#1]}
  \xymatrix{ \D \dar[r] & \D\cup_{\C}\D }$$
  is $\D$ and  not $\C$, since $\D\cup_{\C}\D = \D.$\\
  
More over the suspension functor in $\Cat_{\ast}$ is trivial (equivalent to the identity functor). For all this raisons, it is not interesting to consider the category of spectra $\Spn(\Cat_{\ast})$.\\
\end{remark}
%La conclusion finale est que la catégorie $\sCat$ est une catégorie modèle propre de génération cofibrante et cellulaire. On peut donc lancer le processus de "stabilisation" décrit dans \cite{hovey2001}.

 \section{The $\overline{W}$-Model structure on $\sCat$ } \label{section3}
The goal of this section is to introduce a second new model structure on the category $\sCat$ Quillen equivalent to the previous. The second new model structure has all the good properties (proper, cellular) and more over it is (co)tensored over the model category of simplicial sets in a compatible way with the model structure. Our main inspiration come from the technical artical \cite{cegarra2007}. Roughly speaking, the authors use a new adjunction between $\sSet$ and $\sSet^{2}$ in order to transfer the model structre to the category of bisimplicial sets, it is denoted by $\overline{W}$-Model structure. The class of weak equivalences are the same as in the Moerdijk model structure on $\sSet^{2}$ but there is less cofibration and more fibrations. The left adjoint functor $Dec:\sSet\rightarrow\sSet^{2}$ used for defining 
 $\overline{W}$-Model structure is cartesian i.e., $Dec(X_{\bullet}\times Y_{\bullet})=Dec (X_{\bullet})\times Dec (Y_{\bullet})$. This observation is crucial for our propose. We will explain the consequence of such observation for the $\overline{W}$-Model structure on $\sCat$. \\

\begin{definition}
The Illusie functor $Dec: \sSet\rightarrow \sSet^{2}$ is defined for all simplicial sets $Y_{\bullet}$  by$Dec(Y_{\bullet})_{p,q}=Y_{p+q+1}~\forall p,q$ . The horizontal faces are given by $d^{h}_{i}=d_{i}:~Y_{p+q+1}\rightarrow Y_{p+q}$, in the same way the degeneracies are $s^{h}_{i}=s_{i}$. The vertical faces are given by  $d_{j}^{v}=d_{p+1+j}:~Y_{p+1+q}\rightarrow Y_{p+q}$ and the vertical degeneracies are$s_{j}^{v}=s_{p+1+j}$.
\end{definition}
\begin{lemma}
The functor $Dec$ has a right adjoint $\overline{W}:~\sSet^{2}\rightarrow \sSet$ defined as follow:
$$ \overline{W}(X)_{n}=\{(x_{0,n},\dots,x_{n,0})\in \prod_{p=0}^{n}X_{p,n-p}|~ d_{0}^{v}x_{p,n-p}=d^{h}_{p+1}x_{p+1,n-p-1},0\leq p< n \}$$
\end{lemma}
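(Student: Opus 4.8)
To prove this I would treat it as the classical adjunction between total décalage and the $\overline{W}$-construction (the Artin--Mazur codiagonal); cf.\ Illusie and \cite{cegarra2007}. The cleanest organization is to observe first that $Dec$ is a restriction functor, so that a right adjoint exists for formal reasons, and then to identify that right adjoint with the explicit $\overline{W}$ by checking the universal property. Concretely: writing $[p]\oplus[q]=[p+q+1]$ for the ordinal sum on $\Delta$ and noting that the horizontal (resp.\ vertical) operators of $Dec(Y)$ are exactly the operators of $Y$ induced by the first (resp.\ second) summand of $[p]\oplus[q]$, one has $Dec=\oplus^{*}$ for $\oplus\colon\Delta\times\Delta\to\Delta$; since $\sSet$ and $\sSet^{2}$ are presheaf categories, $\oplus^{*}$ has a right adjoint, namely $\mathrm{Ran}_{\oplus}$. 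Evaluating on a representable, $\mathrm{Ran}_{\oplus}(X)_{n}\cong\sSet^{2}(Dec(\Delta^{n}),X)$, and since $Dec(\Delta^{n})$ is generated under its horizontal and vertical degeneracies by the $n+1$ codegeneracies $\sigma_{p}\in Dec(\Delta^{n})_{p,\,n-p}$, $0\le p\le n$, the only forced identification among their faces being $d^{v}_{0}\sigma_{p}=\mathrm{id}_{[n]}=d^{h}_{p+1}\sigma_{p+1}$, a map $Dec(\Delta^{n})\to X$ is precisely a tuple $(x_{0,n},\dots,x_{n,0})$ with $d^{v}_{0}x_{p,n-p}=d^{h}_{p+1}x_{p+1,n-p-1}$ --- that is, an element of $\overline{W}(X)_{n}$. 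It then remains to check that this bijection carries the simplicial operators of $\mathrm{Ran}_{\oplus}(X)$ to the claimed ones and is natural in $X$, which completes the identification.

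Alternatively --- and this is probably the version to write out in full --- I would exhibit the unit and counit by hand and check the triangle identities. The counit $\epsilon_{X}\colon Dec\,\overline{W}(X)\to X$ is, in bidegree $(p,q)$, the map $\overline{W}(X)_{p+q+1}\to X_{p,q}$ sending $(x_{0,p+q+1},\dots,x_{p+q+1,0})$ to $d^{v}_{0}x_{p,q+1}$; this is well defined and could equally be written as $d^{h}_{p+1}x_{p+1,q}$, precisely because of the equalizer relation defining $\overline{W}(X)$, and it is a map of bisimplicial sets, natural in $X$, by a routine use of the simplicial identities together with that relation. The unit $\eta_{Y}\colon Y\to\overline{W}(Dec\,Y)$ sends $y\in Y_{n}$ to the total degeneracy $(s_{0}y,s_{1}y,\dots,s_{n}y)$, which lies in $\overline{W}(Dec\,Y)_{n}\subseteq\prod_{p=0}^{n}Dec(Y)_{p,n-p}$ because, under the identifications $d^{v}_{0}=d_{p+1}$ on $Dec(Y)_{p,n-p}=Y_{n+1}$ and $d^{h}_{p+1}=d_{p+1}$ on $Dec(Y)_{p+1,n-p-1}=Y_{n+1}$, the required compatibility reads $d_{p+1}s_{p}y=d_{p+1}s_{p+1}y$, and both sides equal $y$ by $d_{j+1}s_{j}=d_{j}s_{j}=\mathrm{id}$. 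The two triangle identities then collapse to these same simplicial identities; for instance $(\epsilon_{Dec\,Y})_{p,q}\circ(Dec\,\eta_{Y})_{p,q}$ sends $y\in Y_{p+q+1}$ to $d^{v}_{0}(s_{p}y)=d_{p+1}s_{p}y=y$.

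Either way, the argument is conceptually trivial; the work is entirely combinatorial bookkeeping --- keeping straight the reindexing $d^{v}_{j}=d_{p+1+j}$, $s^{v}_{j}=s_{p+1+j}$ of the vertical operators of $Dec$ against the horizontal ones $d^{h}_{i}=d_{i}$, $s^{h}_{i}=s_{i}$, and matching these against the relation that defines $\overline{W}(X)$. The one genuinely substantive step, which I would set up carefully before the statement (or import from \cite{cegarra2007}), is the explicit description of the face and degeneracy maps of $\overline{W}(X)$ --- equivalently, the claim that the codegeneracies $\sigma_{p}$ generate $Dec(\Delta^{n})$ subject only to the relations above; once that is in place, naturality and the triangle identities are mechanical.
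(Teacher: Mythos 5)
The paper does not actually prove this lemma: it states it and then immediately remarks ``For the definition of degeneracies and faces of the simplicial set $\overline{W}(X)$ we refer to \cite{cegarra2007},'' treating both the formula and the adjunction as imported from Cegarra--Remedios. Your argument therefore supplies a proof that the paper deliberately omits, and it is correct. Both of your routes are sound: the identification $Dec=\oplus^{*}$ for the ordinal sum $\oplus\colon\Delta\times\Delta\to\Delta$, $[p]\oplus[q]=[p+q+1]$, is right (one checks the face/degeneracy reindexing $d^{h}_{i}=d_{i}$, $d^{v}_{j}=d_{p+1+j}$ matches exactly the paper's definition of $Dec$), and from $\mathrm{Ran}_{\oplus}(X)_{n}\cong\sSet^{2}(Dec(\Delta^{n}),X)$ the formula for $\overline{W}$ follows once one knows that $Dec(\Delta^{n})$ is the iterated pushout $\Delta^{0,n}\sqcup_{\Delta^{0,n-1}}\Delta^{1,n-1}\sqcup\cdots\sqcup_{\Delta^{n-1,0}}\Delta^{n,0}$, glued along $d^{v}_{0}$ and $d^{h}_{p+1}$; you correctly isolate this decomposition as the one substantive combinatorial input (it is a lemma in \cite{cegarra2007}) rather than pretending it is automatic. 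Your explicit unit/counit computation is also correct: the identities $d^{v}_{0}s_{p}=d_{p+1}s_{p}=\mathrm{id}$ and $d^{h}_{p+1}s_{p+1}=d_{p+1}s_{p+1}=\mathrm{id}$ are exactly what make $\eta$ land in $\overline{W}(Dec\,Y)$ and what collapse the triangle identity $\epsilon_{Dec\,Y}\circ Dec\,\eta_{Y}=\mathrm{id}$. The only thing missing to make the second triangle identity fully rigorous is the explicit description of the simplicial operators of $\overline{W}(X)$, which you again explicitly flag as the bookkeeping to be set up or cited --- so the proposal is honest about its dependencies and otherwise complete.
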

For the definition of degeneracies and faces of the simplicial set $\overline{W}(X)$ we refer to \cite{cegarra2007}.
\begin{corollary}\label{W-modele}
The functor  $\overline{W}$ commutes with directed colimites.
\end{corollary}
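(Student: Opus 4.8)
\textbf{Proof proposal for Corollary \ref{W-modele}.}

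The plan is to deduce this directly from the explicit description of $\overline{W}$ as a limit, together with the fact that in the category $\sSet$ of simplicial sets filtered colimits commute with finite limits. First I would fix a directed (i.e.\ filtered) diagram $\{X^{(\alpha)}\}_{\alpha}$ in $\sSet^{2}$ and write $X = \colim_{\alpha} X^{(\alpha)}$, where the colimit is computed levelwise in each bidegree $(p,q)$, since colimits in a diagram category are computed objectwise. The object $\overline{W}(X)_{n}$ is, by the formula in the lemma above, the equalizer (a finite limit) of the two evident maps
$$
\prod_{p=0}^{n} X_{p,n-p} \rightrightarrows \prod_{p=0}^{n-1} X_{p,n-p-1},
$$
one given by the collection of face maps $d_{0}^{v}$ and the other by the face maps $d_{p+1}^{h}$; this is a finite limit diagram because for fixed $n$ the products are finite.

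Next I would invoke the standard fact that in $\sSet$ (and indeed in any presheaf category over a small category, hence in particular after evaluating in each simplicial degree $n$) filtered colimits commute with finite limits. Applying this degreewise in $n$: since $X_{p,n-p} = \colim_{\alpha} X^{(\alpha)}_{p,n-p}$ for every $(p,q)$, and since the finite products $\prod_{p} X_{p,n-p}$ and the equalizer defining $\overline{W}(X)_{n}$ are finite limits, we get natural isomorphisms
$$
\overline{W}(\colim_{\alpha} X^{(\alpha)})_{n} \;\cong\; \colim_{\alpha}\,\overline{W}(X^{(\alpha)})_{n}
$$
for every $n$, compatibly with the simplicial faces and degeneracies of $\overline{W}$ recalled from \cite{cegarra2007}. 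Assembling over all $n$ yields the desired isomorphism $\overline{W}(\colim_{\alpha} X^{(\alpha)}) \cong \colim_{\alpha} \overline{W}(X^{(\alpha)})$ in $\sSet$, which is the assertion that $\overline{W}$ commutes with directed colimits.

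The only point requiring a little care — and the one I would present as the main (modest) obstacle — is checking that the colimit in $\sSet^{2}$ along which we wish to commute is genuinely a filtered colimit computed bidegreewise, so that the finite-limit/filtered-colimit interchange applies verbatim in each bidegree $(p,q)$ and then in each total degree $n$; this is immediate since colimits in the functor category $\sSet^{2} = [\Delta^{op}\times\Delta^{op},\Set]$ (equivalently $[\Delta^{op},\sSet]$) are computed pointwise, but it is worth stating explicitly. Everything else is the formal interchange of filtered colimits with the finite limits (finite products and one equalizer) that appear in the defining formula for $\overline{W}$; no genuine homotopy-theoretic input is needed, only the $1$-categorical interchange.
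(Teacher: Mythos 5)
Your proposal is correct and follows essentially the same route as the paper: both arguments rest on the interchange of directed (filtered) colimits with the finite limits appearing in the explicit formula for $\overline{W}(X)_{n}$, computed degreewise. The only difference is that you are slightly more careful than the paper's one-line proof, which invokes only the commutation of finite products with directed colimits, whereas you also note that the compatibility conditions $d_{0}^{v}x_{p,n-p}=d_{p+1}^{h}x_{p+1,n-p-1}$ make $\overline{W}(X)_{n}$ an equalizer (still a finite limit), so the same interchange applies.
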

\begin{proof}
Let  $\colim_{\lambda}X$ be a directed colimit of objets in $\sSet^{2}$, the equality
$$\overline{W}(\colim_{\lambda}X)_{n}=\colim_{\lambda}\overline{W}(X)_{n}$$
is a consequence of the fact that a finite products commutes with directed colimits in $\sSet^{2}$.
\end{proof}

\begin{theorem} \label{W-structure}\cite{cegarra2007}
The category of bisimplicial sets $\sSet^{2}$ admits a structure of cofibrantly generated model category, denoted by $\overline{W}$-structure, where a morphism $f$ is a fibration (weak equivalence) if $\overline{W}(f)$ is a fibration (weak equivalence) of simplicial sets. Moreover:
\begin{enumerate}
\item any Moerdijk  fibration is a $\overline{W}$-fibration;
\item any $\overline{W}$-cofibration is a Moerdijk-cofibration;
\item a morphism of bisimplicial sets is a Moerdijk weak equivalence (i.e., diagonal equivalence) if and only if it is a $\overline{W}$-equivalence.  
\end{enumerate}
\end{theorem}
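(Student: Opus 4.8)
The plan is to obtain the $\overline{W}$-structure by transferring the standard Quillen model structure on $\sSet$ along the adjunction $Dec\dashv\overline{W}$, invoking Lemma~\ref{lem1} with $\mathcal{M}=\sSet$, $\mathcal{C}=\sSet^{2}$, $G=Dec$ and $F=\overline{W}$, and then reading off the three comparison statements from the behaviour of $Dec$ and $\overline{W}$ on monomorphisms and on weak equivalences.

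First I would check the four hypotheses of Lemma~\ref{lem1}. Hypothesis (1) is automatic: $\sSet^{2}$ is a category of presheaves, so every object --- in particular each $Dec(\partial\Delta^{n})$ and $Dec(\Lambda^{n}_{i})$ --- is small (cf.\ the appendix). Hypothesis (2), that $\overline{W}$ preserves directed colimits, is exactly Corollary~\ref{W-modele}. Hypothesis (3) holds because a transfinite composite of weak equivalences of simplicial sets is again one. The real content is hypothesis (4): for a generating acyclic cofibration $j\colon\Lambda^{n}_{i}\hookrightarrow\Delta^{n}$ of $\sSet$, the pushout of $Dec(j)$ along any morphism of bisimplicial sets must lie in the prescribed class of weak equivalences. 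I would prove this by showing that $Dec(j)$ is already a Moerdijk acyclic cofibration and then using that acyclic cofibrations are stable under pushout in the Moerdijk model structure. That $Dec(j)$ is a Moerdijk cofibration is clear, since $(Dec\,j)_{p,q}=j_{p+q+1}$ is a monomorphism in each bidegree and monomorphisms are the Moerdijk cofibrations. That $Dec(j)$ is a Moerdijk (equivalently, diagonal) weak equivalence rests on two standard facts: first, $\diag\,Dec\,X$ is naturally weakly equivalent to $X$ --- it has $X_{2n+1}$ in degree $n$ and is, up to natural isomorphism, the edgewise subdivision $\mathrm{sd}_{e}X$, whose realization is naturally homeomorphic to $|X|$ --- so $Dec$ carries every weak equivalence of simplicial sets to a diagonal weak equivalence; second, there is a natural weak equivalence relating $\diag$ and $\overline{W}$ on bisimplicial sets (\cite{cegarra2007}), so ``diagonal weak equivalence'' and ``$\overline{W}$-weak equivalence'' coincide. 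With (1)--(4) in hand, Lemma~\ref{lem1} produces the cofibrantly generated model structure on $\sSet^{2}$ with the asserted weak equivalences and fibrations and with $Dec(\mathrm{I})$, $Dec(\mathrm{J})$ as generating (acyclic) cofibrations.

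It then remains to establish the three comparison properties. Property (3) is precisely the second fact above, combined with the two-out-of-three axiom applied to the naturality square of $\diag\simeq\overline{W}$. For property (2), the generating $\overline{W}$-cofibrations $Dec(\partial\Delta^{n}\hookrightarrow\Delta^{n})$ are monomorphisms, and monomorphisms of bisimplicial sets are closed under pushout, transfinite composition and retract; hence every $\overline{W}$-cofibration is a monomorphism, i.e.\ a Moerdijk cofibration. For property (1), a map $p$ is a $\overline{W}$-fibration exactly when it has the right lifting property against $Dec(\mathrm{J})=\{Dec(\Lambda^{n}_{i}\hookrightarrow\Delta^{n})\}$; but each such $Dec(\Lambda^{n}_{i}\hookrightarrow\Delta^{n})$ was shown above to be a Moerdijk acyclic cofibration, so any Moerdijk fibration lifts against it and is therefore a $\overline{W}$-fibration.

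The main obstacle is hypothesis (4), and inside it the two non-formal inputs --- the identification of $\diag\,Dec$ with an edgewise subdivision (hence its preservation of homotopy type) and the natural equivalence $\diag\simeq\overline{W}$ on $\sSet^{2}$ --- which are drawn from \cite{cegarra2007} and classical simplicial homotopy theory; everything else is bookkeeping with the transfer lemma and with monomorphisms.
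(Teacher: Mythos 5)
The paper does not prove this theorem; it imports it verbatim from Cegarra--Remedios \cite{cegarra2007}, so there is no in-text argument to compare against. That said, your reconstruction by transfer along $Dec\dashv\overline W$ via Lemma~\ref{lem1} is the natural route and is sound in outline. Hypotheses (1)--(3) of the transfer lemma are dispatched as you say; for hypothesis (4) it suffices that $Dec(\Lambda^n_i\hookrightarrow\Delta^n)$ is a Moerdijk acyclic cofibration (a bidegree-wise monomorphism and a diagonal weak equivalence), so its cobase changes are diagonal weak equivalences, and that diagonal and $\overline W$-weak equivalences coincide by the Cegarra--Remedios natural weak equivalence $\diag\Rightarrow\overline W$. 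The three comparison clauses then follow formally exactly as you argue.

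There is, however, one genuine error in the reduction: $\diag\,Dec\,X$ is \emph{not} naturally isomorphic to Segal's edgewise subdivision $\mathrm{sd}_eX$. Both have $X_{2n+1}$ in degree $n$, but $\mathrm{sd}_e$ is $X$ precomposed with $[n]\mapsto[n]^{\mathrm{op}}*[n]$, whose $i$-th face operator is $d_{n-i}\circ d_{n+1+i}$, whereas $\diag\,Dec\,X$ corresponds to $[n]\mapsto[n]*[n]$, with $i$-th face $d_{i}\circ d_{n+1+i}$. Already for $X=\Delta^1$ the two disagree: $\diag\,Dec\,\Delta^1$ is the two-edge path $00\to 01\to 11$, while $\mathrm{sd}_e\Delta^1$ is the wedge $00\to 01\leftarrow 11$, and these are not isomorphic simplicial sets. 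The fact you actually need, that $\diag\,Dec\,X$ is naturally weakly equivalent to $X$, is true, but its standard justification is different: the horizontal augmentation $Dec\,X\to X$ (given bidegree-wise by iterated $d_0$) is a simplicial homotopy equivalence in each fixed degree of one simplicial direction, because d\'ecalage retracts onto its augmentation via the extra degeneracy; hence its diagonal is a weak equivalence by the realization lemma. With that substitution your proof goes through.
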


Moreover, the new $\overline{W}$-model structure on bisimplicial sets is cofibrantly generated, where 
\begin{enumerate}
 \item the generating cofibrations are given by  $Dec~ \partial\Delta^{n}\rightarrow Dec ~\Delta^{n}, ~n\in \mathbb{N}$.
 \item the generating acyclic cofibrations are given by  $Dec~ \Lambda^{n}_{i}\rightarrow Dec~ \Delta^{n}, ~n\in \mathbb{N}, ~ 0\leq i\leq n$.
\end{enumerate}

\begin{remark}
The $\overline{W}$-structure and Moerdijk model structure on bisimplicial sets are Quillen equivalente, the equivalence is given by the functor identity. Since the functor $Dec$ is cartesian, the $\overline{W}$-model structure on simplicial sets is (co)tensored (in a compatible way) over the model category à $\sSet$.
\end{remark}

\begin{theorem}\label{newstructure}
There exists a $\overline{W}$-model structure on $\sCat$ equivalent to the diagonal model structure \ref{structure}  induced by the adjunction 
$$\xymatrix{
\sSet \ar@<1ex>[r]^-{\pi_{\bullet}~Dec} & \sCat \ar@<1ex>[l]^-{\overline{W}\N\iso} 
}$$
\begin{enumerate}
\item A morphism $f:\C_{\bullet}\rightarrow \D_{\bullet}$ is a weak equivalence (resp. fibration)
if $\overline{W}\N\iso(f)$ is a weak equivalence (resp. fibration) of simplicial sets. 
\item The generating cofibrations are given by $\pi_{\bullet}Dec(\partial\Delta^{n})\rightarrow\pi_{\bullet}Dec(\Delta^{n})$ for all $n\in \mathbb{N}$.
\item The generating acyclic  cofibrations are given by $\pi_{\bullet}Dec(\Lambda^{n}_{i})\rightarrow\pi_{\bullet}Dec(\Delta^{n})$ for all $n\in \mathbb{N}$ and $0\leq i\leq n$. 
\end{enumerate}
 Moreover, the $\overline{W}$-model structure on $\sCat$ is left proper and cellular.
\end{theorem}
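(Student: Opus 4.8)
The plan is to transfer the $\overline{W}$-model structure from $\sSet$ to $\sCat$ along the adjunction $\pi_{\bullet}\,Dec \dashv \overline{W}\N\iso$ using the same transfer lemma \ref{lem1} that was used to establish the diagonal structure in Theorem \ref{structure}. So the proof has exactly the same four hypotheses to verify, and the strategy is to reduce each one to its diagonal-model-structure analogue, exploiting Theorem \ref{W-structure}(3): a map of bisimplicial sets is a $\overline{W}$-equivalence if and only if it is a diagonal equivalence. First I would record that the generating (acyclic) cofibrations are $\pi_{\bullet}Dec(\partial\Delta^n)\to\pi_{\bullet}Dec(\Delta^n)$ and $\pi_{\bullet}Dec(\Lambda^n_i)\to\pi_{\bullet}Dec(\Delta^n)$, since $Dec$ sends the generators of $\sSet$ to the generators of the $\overline{W}$-structure on $\sSet^2$.

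Now for the four hypotheses of Lemma \ref{lem1}. Hypothesis (1), smallness of the domains of $\pi_{\bullet}Dec(i)$ relative to $\pi_{\bullet}Dec(\I)$ (and likewise for $\J$), follows exactly as in the appendix arguments cited in the proof of Theorem \ref{structure} (\ref{petitscat}): $\pi_{\bullet}Dec(\partial\Delta^n)$, $\pi_{\bullet}Dec(\Lambda^n_i)$ have only finitely many objects and morphisms in each degree, and $Dec$ of a finite simplicial set is degreewise finite, so these are small. Hypothesis (2), that $\overline{W}\N\iso$ commutes with directed colimits, follows by combining Corollary \ref{W-modele} ($\overline{W}$ commutes with directed colimits) with the fact that $\N\iso$ commutes with directed colimits, which was already used in section \ref{section1}. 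Hypothesis (3), that transfinite composites of $\overline{W}$-equivalences of simplicial sets are $\overline{W}$-equivalences, is automatic since these are just ordinary weak equivalences of simplicial sets.

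Hypothesis (4) — that the pushout of a generating acyclic cofibration $\pi_{\bullet}Dec(\Lambda^n_i)\to\pi_{\bullet}Dec(\Delta^n)$ along any map in $\sCat$ is a weak equivalence — is the real work, and it is the analogue of Lemma \ref{lemfond}. The key point is that Theorem \ref{W-structure}(3) identifies $\overline{W}$-equivalences with diagonal equivalences, so it suffices to show the pushout is a diagonal equivalence. I would first check that $\pi_{\bullet}Dec(\Lambda^n_i)\to\pi_{\bullet}Dec(\Delta^n)$ is, degreewise, a fully faithful inclusion of groupoids onto a disjoint summand, exactly the hypothesis needed to apply Lemma \ref{Niso}: here one uses the description of $d_\ast K$-type decompositions transported to $Dec$, namely that $Dec\,\Lambda^n_i$ and $Dec\,\Delta^n$ decompose degreewise as disjoint unions of connected (indeed contractible, since $\Lambda^n_i$ is contractible) subcomplexes, so their fundamental groupoids are degreewise disjoint unions of trivial groupoids. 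Then the argument of Lemma \ref{lemfond} goes through verbatim with $\diag\N\iso$ replaced by $\overline{W}\N\iso$: pushouts in $\sCat$ are computed degreewise, Lemma \ref{Niso} applied degreewise shows the comparison map $t$ is a degreewise (hence diagonal, hence $\overline{W}$-) equivalence, and one needs in addition that $\overline{W}\N\iso(\pi_{\bullet}Dec(\Lambda^n_i)\to\pi_{\bullet}Dec(\Delta^n))$ is a cofibration of simplicial sets — which follows because $\N\iso$ of a degreewise monomorphism is a monomorphism of bisimplicial sets, $Dec$-generators are $\overline{W}$-cofibrations by Theorem \ref{W-structure}, and $\overline{W}$ of a $\overline{W}$-cofibration is a monomorphism of simplicial sets — so the "2 out of 3" argument concludes as before. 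The main obstacle I anticipate is precisely verifying that the $Dec$-generators decompose degreewise into contractible summands with trivial fundamental groupoid, so that Lemma \ref{Niso} applies; once that combinatorial fact is in hand, the transfer is formal.

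Finally, left properness and cellularity: since the $\overline{W}$-structure has the same weak equivalences as the diagonal structure and the cofibrations of the $\overline{W}$-structure are a subclass of the diagonal cofibrations (as $Dec\,\partial\Delta^n\to Dec\,\Delta^n$ is a $\overline{W}$-cofibration, hence a Moerdijk cofibration by Theorem \ref{W-structure}(2), so $\pi_{\bullet}Dec$ of it is a diagonal cofibration, and cofibrations are closed under the cellular operations), left properness is inherited directly from Corollary \ref{leftproper} — a pushout of a $\overline{W}$-equivalence along a $\overline{W}$-cofibration is in particular a pushout of a diagonal equivalence along a diagonal cofibration, hence a diagonal equivalence, hence a $\overline{W}$-equivalence. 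For cellularity one re-runs the three checks of Corollary \ref{cellsacat}: compactness of the domains/codomains of the new generators reduces as before to compactness of $\partial\Delta^n$, $\Lambda^n_i$, $\Delta^n$ in $\sSet$ together with the adjunction $\pi_{\bullet}Dec\dashv\overline{W}\N\iso$ and the fact (Corollary \ref{W-modele}) that $\overline{W}$ preserves directed colimits; smallness of the domains of $\J$ relative to $\I$-cells is the same adjunction computation; and the cofibrations being effective monomorphisms follows since $\overline{W}$-cofibrations in $\sCat$ are in particular diagonal cofibrations, which are effective monomorphisms by the last lemma of section \ref{section2}.
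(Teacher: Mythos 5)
Your overall strategy coincides with the paper's: transfer along $\pi_{\bullet}\,Dec\dashv\overline{W}\N\iso$ via Lemma \ref{lem1}, and lean on Theorem \ref{W-structure}(3) to identify $\overline{W}$-equivalences with diagonal equivalences. But for the key hypothesis (4) you take a genuinely different, and considerably longer, route than the paper, and one of your steps as stated would not go through. The paper's argument is essentially a one-liner: a generating $\overline{W}$-acyclic cofibration $j\colon Dec\,\Lambda^n_i\to Dec\,\Delta^n$ is in particular a Moerdijk acyclic cofibration in $\sSet^2$ by Theorem \ref{W-structure}(1)--(3); since $\pi_{\bullet}\dashv\N\iso$ is a Quillen adjunction between Moerdijk $\sSet^2$ and the already--established diagonal $\sCat$ of Theorem \ref{structure}, the map $\pi_{\bullet}(j)$ is an acyclic cofibration in the diagonal $\sCat$; any pushout of it is then automatically an acyclic cofibration, hence a diagonal equivalence, hence a $\overline{W}$-equivalence by \ref{W-structure}(3). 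No new combinatorics about $Dec$ are needed. You instead propose to re-run the analysis of Lemmas \ref{cofgenscat}--\ref{lemfond} directly on the $Dec$-generators, which requires verifying that $Dec\,\Lambda^n_i$ and $Dec\,\Delta^n$ decompose degreewise into disjoint unions of contractible summands so that Lemma \ref{Niso} applies -- and you correctly flag this as the anticipated obstacle. That decomposition does hold (each $Dec(Y)_{p,\bullet}$ deformation retracts onto the discrete set $Y_p$), so your route \emph{can} be made to work, but it is strictly extra effort; the paper's observation that the pushout claim is automatic once one has the diagonal model structure bypasses it entirely. Separately, your phrase that the argument of Lemma \ref{lemfond} ``goes through verbatim with $\diag\N\iso$ replaced by $\overline{W}\N\iso$'' is not literally correct: $\overline{W}$ is a right adjoint and does not preserve pushouts, so the step in \ref{lemfond} where $\diag$ is pushed through the colimit would fail for $\overline{W}$. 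What you actually need (and indicate earlier in your own paragraph) is to verify the pushout is a diagonal equivalence using $\diag\N\iso$ as in \ref{lemfond}, and only at the very end translate to a $\overline{W}$-equivalence via \ref{W-structure}(3). Your verifications of hypotheses (1)--(3) and the closing arguments for left properness and cellularity match the paper.
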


\begin{proof}

The generating cofibrations are given by  $\pi_{\bullet} Dec~ \partial\Delta^{n}\rightarrow \pi_{\bullet} Dec ~\Delta^{n}, ~n\in \mathbb{N}$.
The generating acyclic cofibrations are given by  $\pi_{\bullet} Dec~ \Lambda^{n}_{i}\rightarrow \pi_{\bullet} Dec~ \Delta^{n}, ~n\in \mathbb{N}, ~ 0\leq i\leq n$.
In order to show that this choice of (acyclic) cofibrations determines a model structure, it  is sufficient to show the hypothesis (2) and (4) from lemma \ref{lem1}.
The point (2) is a direct consequence of \ref{W-modele}.
Let  $j: Dec~ \Lambda^{n}_{i}\rightarrow Dec~ \Delta^{n}$ be generating acyclic cofibration in $\overline{W}$-model structure
$\sSet^{2}$. We know by \ref{W-structure} that $j$ is an acyclic cofibration in the Moerdijk diagonal model structure on $\sSet^{2}$. Consequently, $\pi_{\bullet}(j)$ is an acyclic cofibration in the diagonal model structure on  
$\sCat$ \ref{structure}. So, the pushout of $\pi_{\bullet}(j)$ along a morphism  $f:\pi_{\bullet} Dec~ \Lambda^{n}_{i}\rightarrow\C_{\bullet}$ in  $\sCat$:
 $$\xymatrix{
   \pi_{\bullet} Dec~ \Lambda^{n}_{i} \ar[r]^{f} \ar[d] ^{\pi_{\bullet}(j)} & \C_{\bullet} \ar[d] \\
    \pi_{\bullet} Dec~ \Delta^{n} \ar[r] & \D_{\bullet}
  }$$
is a weak diagonal equivalence i.e., $\diag\N\iso \C_{\bullet}\rightarrow\diag\N\iso \D_{\bullet}$ is a weak equivalence in $\sSet$. By \ref{W-structure}, we conclude that 
$$\overline{W}\N\iso \C_{\bullet}\rightarrow\overline{W}\N\iso \D_{\bullet}$$
is an equivalence in $\sSet$.\\
To show that $\overline{W}$-model structure on $\sCat$ is left proper and cellular, we remark that  cofibrations in $\overline{W}$-model structure on $\sCat$ are also cofibrations  in the diagonal model structre on $\sCat$. Consequently, we have less cofibrations in
 $\overline{W}$-model structure $\sCat$ than in the diagonal model structure on  $\sCat$, but in the same time the class of weak equivalences are the same in both model structures, it implies that  $\overline{W}$-model structure on $\sCat$ is left proper, cellular and Quillen equivalent to \ref{structure}.\\
 
\end{proof}
\begin{remark}
We should remark at this stage that  the $\overline{W}$-model structure on $\sCat$ is deduced form the diagonal model structure on $\sCat$. It seems that a direct proof of $\overline{W}$-model structure is quite hard. The following section we will see why it is better to consider $\overline{W}$-structure than the diagonal one.
\end{remark} 
\begin{remark}
The diagonal model structre (resp. the $\overline{W}$-model structure) on $\sCat$ can be restricted in a natural way to the diagonal model structure (resp. $\overline{W}$-model structure) on $\mathbf{sGrp}$, the category of simplicial groupoids.  
\end{remark}

%%%%%%%%%%%%%%%%%%%%%%%%%%%%%%%%%%%%%%

\section{Pointed model structure $\sCat_{\ast}$ }\label{section4}

The main goal of this section is to define the suspension and loop functor in the model category $\sCat$. In order to construct such functors we need a pointed model version of $\sCat$. We denote the pointed category by $\sCat_{\ast}$ or by $\ast\downarrow \sCat$   (\cite{Hovey}, chapter 6).

\begin{definition}
A pointed category $\C$ is equipped with a functor $\ast\rightarrow \C$  where $\ast$ is the terminal object in  $\sCat$.  
\end{definition}     
Recall the the tensorization of $\Cat$ by $\sSet$ is defined by
$$X_{\bullet}\otimes \C = \pi X_{\bullet}\times \C$$  
similarly, the cotensorization is given by 
$$ \C^{X_{\bullet}}= \HOM_{\Cat}(\pi X_{\bullet},\C)$$

We construct a (co)tensorization of $\sCat_{\ast}$ by $\sSet_{\ast}$ following the same procedures as before but in more general context.
Suppose that we have an adjunction between $\sSet$ and $\sCat$ such that the left adjoint is cartesian $\rho :~\sSet\rightarrow \sCat$ i.e., 
$\rho(X_{\bullet})\times \rho(Y_{\bullet})=\rho(X_{\bullet}\times Y_{\bullet}).$ 
The tonsororization is defined by  $\C_{\bullet}\otimes_{\rho} X_{\bullet}= \C\times \rho X_{\bullet}$ and the cotensorization by $\C^{X}= \HOM_{\sCat}(\rho X_{\bullet}, \C_{\bullet}).$

\begin{definition}
Let $\C_{\bullet}$ an object of $\sCat_{\ast}$. In order to construct the tensorization,  
$$ -\odot-:~\sCat_{\ast}\times\sSet\rightarrow \sCat_{\ast} $$ 
we start by defining the tensor product with $\Delta^{n}$ then for all simplicial sets  $\sSet$ by left Kan extension. In particular, $\C_{\bullet}\odot\Delta^{n}$ is given by the pushout: 

$$\xymatrix{ 
 \ast \otimes_{\rho}\Delta^{n}\ar[rr]\ar[d] & & \C_{\bullet}\otimes_{\rho}\Delta^{n} \ar[d]\\
 \ast \ar[rr] && \C_{\bullet}\odot_{\rho}\Delta^{n}.
 }$$

\end{definition}

\begin{definition}
The smash product  $-\land_{\rho}-:~\sSet_{\ast}\times\sCat_{\ast}\rightarrow \sCat_{\ast}$ is defined first for $\Delta_{+}^{n}$ by the formula
$$\C_{\bullet}\land_{\rho}\Delta_{+}^{n}=\C_{\bullet}\odot_{\rho}\Delta^{n}$$
and then extended to $\sSet_{\ast}$ by left Kan extension.

\end{definition}
 %adjonction 
\begin{lemma}\label{cotensrho}
The functor $-\odot_{\rho} X_{\bullet}:~ \sCat_{\ast}\rightarrow \sCat_{\ast}$ admits a right adjoint which is denoted by  $(-)^{X_{\bullet}}:~ \sCat_{\ast}\rightarrow \sCat_{\ast}$
\end{lemma}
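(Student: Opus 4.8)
The plan is to produce the right adjoint $(-)^{X_{\bullet}}$ by the same bootstrap that defined $-\odot_{\rho}X_{\bullet}$: first handle the representables $X_{\bullet}=\Delta^{n}$, then extend to all simplicial sets. Since every simplicial set is a colimit of representables, $X_{\bullet}=\colim_{\Delta^{m}\to X_{\bullet}}\Delta^{m}$, and since $-\odot_{\rho}-$ was defined by left Kan extension it converts this colimit in the $\sSet$-variable into a colimit in $\sCat_{\ast}$, i.e. $\C_{\bullet}\odot_{\rho}X_{\bullet}=\colim_{\Delta^{m}\to X_{\bullet}}(\C_{\bullet}\odot_{\rho}\Delta^{m})$. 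So it suffices to build a right adjoint to each $-\odot_{\rho}\Delta^{n}$ and then set
$$
(\C_{\bullet})^{X_{\bullet}}=\lim_{\Delta^{m}\to X_{\bullet}}(\C_{\bullet})^{\Delta^{m}},
$$
the limit over the (opposite of the) category of simplices of $X_{\bullet}$; the adjunction $\homs_{\sCat_{\ast}}(\C_{\bullet}\odot_{\rho}X_{\bullet},\D_{\bullet})\cong\homs_{\sCat_{\ast}}(\C_{\bullet},(\D_{\bullet})^{X_{\bullet}})$ then follows formally by commuting the colimit in the first variable past the Hom and assembling the pieces. (Equivalently, one invokes the adjoint functor theorem: $\sCat_{\ast}$ is complete and cocomplete and locally presentable, and $-\odot_{\rho}X_{\bullet}$ preserves all colimits by construction, being a left Kan extension composed with a pushout along the cocontinuous $-\otimes_{\rho}X_{\bullet}$; hence it has a right adjoint. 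I would present the explicit limit formula and note the AFT as the conceptual reason.)

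First I would treat $X_{\bullet}=\Delta^{n}$. Recall $\C_{\bullet}\odot_{\rho}\Delta^{n}$ is the pushout of $\ast\leftarrow\ast\otimes_{\rho}\Delta^{n}\to\C_{\bullet}\otimes_{\rho}\Delta^{n}$, where $\C_{\bullet}\otimes_{\rho}\Delta^{n}=\C_{\bullet}\times\rho\Delta^{n}$ has the already-known right adjoint $(-)^{\Delta^{n}}=\HOM_{\sCat}(\rho\Delta^{n},-)$ (the cotensor attached to the cartesian left adjoint $\rho$). A map out of a pushout is a pair of compatible maps, so $\homs_{\sCat_{\ast}}(\C_{\bullet}\odot_{\rho}\Delta^{n},\D_{\bullet})$ is the fibre product $\homs(\ast,\D_{\bullet})\times_{\homs(\ast\otimes_{\rho}\Delta^{n},\D_{\bullet})}\homs(\C_{\bullet}\otimes_{\rho}\Delta^{n},\D_{\bullet})$. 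Using the existing cotensor adjunction on the last two terms and the fact that $\homs_{\sCat_{\ast}}(\ast,-)$ is a point, this fibre product is naturally $\homs_{\sCat_{\ast}}(\C_{\bullet},(\D_{\bullet})^{\Delta^{n}})$ once one defines $(\D_{\bullet})^{\Delta^{n}}$ as the pullback
$$
\xymatrix{
(\D_{\bullet})^{\Delta^{n}}\ar[r]\ar[d] & \HOM_{\sCat}(\rho\Delta^{n},\D_{\bullet})\ar[d]\\
\ast\ar[r] & \HOM_{\sCat}(\rho\Delta^{n},\ast\times\cdots)
}
$$
i.e. the subobject of the unpointed cotensor consisting of maps compatible with the basepoint; this is the evident pointed refinement of $\HOM_{\sCat}(\rho\Delta^{n},-)$. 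Thus the right adjoint on representables exists and is computed by a finite limit.

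The remaining work is bookkeeping: check naturality of all the identifications in $n$ (so that the $(\C_{\bullet})^{\Delta^{n}}$ assemble into a functor on the simplex category of $X_{\bullet}$), and verify that the limit defining $(\C_{\bullet})^{X_{\bullet}}$ is compatible with the left Kan extension defining $\odot_{\rho}$ — this is exactly the statement that $\homs_{\sCat_{\ast}}(-,-)$ sends colimits in the first variable to limits, which is automatic. I expect the only genuine subtlety, and hence the main obstacle, to be keeping the pointed structure straight through the pushout/pullback: one must confirm that the pushout defining $\odot_{\rho}\Delta^{n}$ really is the correct "pointed tensor" so that its adjoint is the pointed, rather than unpointed, cotensor, and that the smash product extension $\C_{\bullet}\land_{\rho}\Delta^{n}_{+}=\C_{\bullet}\odot_{\rho}\Delta^{n}$ is respected. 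Once that is pinned down, completeness and cocompleteness of $\sCat_{\ast}$ (inherited from $\Cat$ and $\sSet$) do the rest, and the adjoint functor theorem can be cited as a clean alternative to the explicit formula.
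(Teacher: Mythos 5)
Your overall strategy---handle representables $\Delta^{n}$ first, then extend to all of $\sSet$ using the colimit $X_{\bullet}=\colim_{\Delta^{n}\to X_{\bullet}}\Delta^{n}$ and the cocontinuity of $\odot_{\rho}$---is exactly the paper's. The difference is in how you describe the right adjoint on representables. You present $(\D_{\bullet})^{\Delta^{n}}$ as a pullback carving out a ``subobject of the unpointed cotensor,'' but this is an unnecessary detour, and the pullback diagram as written has an ill-specified corner $\HOM_{\sCat}(\rho\Delta^{n},\ast\times\cdots)$; if that corner is $\HOM_{\sCat}(\rho\Delta^{n},\ast)=\ast$, the pullback is degenerate and returns the full cotensor. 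The paper takes the cleaner route: since $\Delta^{n}$ here is \emph{unpointed}, the right adjoint is literally the unpointed internal hom $\HOM_{\sCat}(\rho\Delta^{n},-)$ on the nose, made into an endofunctor of $\sCat_{\ast}$ by pointing it at the constant functor $0\colon\rho\Delta^{n}\to\C_{\bullet}$; a map $\tilde f\colon\C_{\bullet}\times\rho\Delta^{n}\to\D_{\bullet}$ killing $\ast\otimes_{\rho}\Delta^{n}$ corresponds precisely to a pointed functor $\C_{\bullet}\to\HOM_{\sCat}(\rho\Delta^{n},\D_{\bullet})$, with no further subobject condition. The ``fiber'' picture you describe is what you would want for the adjoint to a smash $-\land_{\rho}X_{\bullet}$ with $X_{\bullet}$ \emph{pointed}, not for $\odot_{\rho}$; keeping that distinction explicit would streamline your argument. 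Your remark that the adjoint functor theorem (completeness, local presentability, cocontinuity of $\odot_{\rho}X_{\bullet}$) gives the existence abstractly is a valid alternative the paper does not use.
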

\begin{proof}
First of all, we construct the adjoint for $-\odot_{\rho} \Delta^{n}$. The functor 
$$ \mathbf{HOM}_{\sCat}({\rho\Delta^{n}},-):\sCat_{\ast}\rightarrow\sCat_{\ast}$$
 which is the right adjoint to the cartesian product for $\sCat$, it sends a pointed category $\C_{\bullet}$ to a pointed category $\mathbf{HOM}_{\sCat}(\rho\Delta^{n}, \C_{\bullet})$   %$\HOM_{\sCat}(\rho \Delta^{n},\C_{\bullet})$
 , where the point is given by the constant functor $0:~\rho \Delta^{n}\rightarrow\C_{\bullet}.$
 We have to verify that it is an adjoint of $-\odot_{\rho}\Delta^{n}$ in $\sCat_{\ast}.$
 Giving a (simplicial) functor $f:~\C_{\bullet}\odot_{\rho} \Delta^{n}\rightarrow \D_{\bullet}$ is equivalent to give a functor  $\tilde{f}:\C_{\bullet}\times \rho \Delta^{n}\rightarrow \D_{\bullet}$ which sends the sub category  $\ast\otimes_{\rho}\Delta^{n}$ to the base point in
 $\D_{\bullet}$, by the same way, it is equivalent to give a pointed functor $g: \C_{\bullet}\rightarrow\HOM_{\sCat}(\rho \Delta^{n},\D_{\bullet}):=\D_{\bullet}^{\Delta^{n}}$.
So,
%$$\homs_{\sCat_{\ast}}(\C_{\bullet}\odot_{\rho} \Delta^{n},\D_{\bullet})= \homs_{\sCat_{\ast}}(\C_{\bullet},\D_{\bullet}^{\Delta^{n}}).$$
in order to prove the adjunction for any simplicial set $X_{\bullet}$, we remark that
$$ \C_{\bullet}\odot_{\rho} (\colim_{\Delta^{n}\rightarrow X_{\bullet}}\Delta^{n})=\colim_{\Delta^{n}\rightarrow X_{\bullet}}(\C_{\bullet}\odot_{\rho}\Delta^{n}).$$

 \end{proof}

Taking our inspiration from $\sSet_{\ast}$ we construct a new model structure on $\sCat_{\ast}$ using the adjunction induced by the forgetful functor and left adjoint which adds a base point. We will show that the new model structure on  $\sCat_{\ast}$ is (co)tensored over the model category of simplicial sets  $\sSet_{\ast}$. The adjunction 
$$\xymatrix{
\sCat \ar@<1ex>[r]^-{(-)_{+}} & \sCat_{\ast}\ar@<1ex>[l]^-{U}
}$$
defines a model structure on $\sCat_{\ast}$, the weak equivalences and fibrations are simply those in the underlying model category  $\sCat$. For more details see \cite{Hovey}.

In this paragraph, we show that for any pointed simplicial set $X_{\bullet}$, the functors
$-\land X_{\bullet} $ and  $ (-)^{X}$ form a Quillen pair.
First of all, that a simplicial pointed set $p:~\Delta^{0}\rightarrow X$ by the following pushout diagram:

$$\xymatrix{
\Delta^{0}_{+}\ar[r]^{p_{+}}\ar[d] & X_{+}\ar[d]\\
\ast\ar[r] & X
}$$

\begin{theorem}\label{S-Quillen}
If $X_{\bullet}$ is a pointed simplicial set, then the functor
$$-\land X_{\bullet} :~ \sCat_{\ast}\rightarrow \sCat_{\ast}$$
 is a left Quillen functor, where $\sCat$ is equipped with $\overline{W}$-model structure. 
 Moreover, the functor $-\land X_{\bullet} $ has a right Quillen adjoint. 

\end{theorem}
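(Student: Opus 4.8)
The plan is to verify the pushout-product (SM7) axiom for the smash product functor $-\land-:\sSet_{\ast}\times\sCat_{\ast}\to\sCat_{\ast}$ with respect to the $\overline{W}$-model structure, and then deduce the one-variable statement by fixing $X_{\bullet}$. Since everything is built by left Kan extension from the simplices $\Delta^n$, and since both model structures involved are cofibrantly generated, it suffices to check the pushout-product condition on the generating (acyclic) cofibrations. Concretely, I would show: for $\partial\Delta^m\hookrightarrow\Delta^m$ a generating cofibration of $\sSet_{\ast}$ (pointed appropriately) and $\pi_{\bullet}Dec(\partial\Delta^n)\to\pi_{\bullet}Dec(\Delta^n)$ a generating cofibration of $\sCat_{\ast}$, the induced map
$$
\bigl(\C_{\bullet}\land\Delta^m_+\bigr)\sqcup_{\C_{\bullet}\land\partial\Delta^m_+}\bigl(\D_{\bullet}\land\partial\Delta^m_+\bigr)\longrightarrow \D_{\bullet}\land\Delta^m_+
$$
is a $\overline{W}$-cofibration, acyclic if either input is. The crucial input here is the remark following Theorem \ref{W-structure}: the functor $Dec$ is cartesian, $Dec(X_{\bullet}\times Y_{\bullet})=Dec(X_{\bullet})\times Dec(Y_{\bullet})$, hence so is $\rho=\pi_{\bullet}Dec$ (the fundamental-groupoid functor $\pi$ preserves products), which is exactly the hypothesis needed to make the tensor $\C_{\bullet}\odot_\rho X_{\bullet}$ and cotensor $(-)^{X_{\bullet}}$ of Lemma \ref{cotensrho} well behaved.

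The key steps, in order. First, reduce to generators: both $\sSet_{\ast}$ and $\sCat_{\ast}$ (with the $\overline{W}$-structure, by Theorem \ref{newstructure}) are cofibrantly generated, so by the standard argument (Hirschhorn, preservation of classes defined by lifting under pushout-products of generators) it is enough to treat generating (acyclic) cofibrations, and then transfinite-composition/retract closure finishes. Second, translate the pushout-product of $\C_{\bullet}\land-$ into a statement about bisimplicial sets via the right adjoint $\overline{W}\N\iso$: a map $f$ in $\sCat_{\ast}$ is a $\overline{W}$-cofibration iff it has the left lifting property against $\overline{W}$-acyclic fibrations, and dually the adjoint characterization says $-\land X_{\bullet}$ is left Quillen iff its right adjoint $(-)^{X_{\bullet}}$ preserves fibrations and acyclic fibrations. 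So I would instead verify: for $X_{\bullet}$ a pointed simplicial set and $\D_{\bullet}\to \E_{\bullet}$ a $\overline{W}$-fibration (resp. acyclic) in $\sCat_{\ast}$, the map $\D_{\bullet}^{X_{\bullet}}\to\E_{\bullet}^{X_{\bullet}}\times_{(\text{something})}(\cdots)$ coming from a cofibration $A_{\bullet}\hookrightarrow X_{\bullet}$ is again a $\overline{W}$-fibration (resp. acyclic). Third, compute $\overline{W}\N\iso$ of a cotensor: using $\HOM_{\sCat}(\rho X_{\bullet},\C_{\bullet})$ and the cartesian-ness of $\rho$, identify $\overline{W}\N\iso\bigl(\C_{\bullet}^{X_{\bullet}}\bigr)$ with an internal hom / cotensor of the simplicial set $\overline{W}\N\iso\C_{\bullet}$ by $X_{\bullet}$ in $\sSet_{\ast}$ — this is where "compatibly (co)tensored" from the remark after Theorem \ref{W-structure} does the real work, descending the compatibility from bisimplicial sets to $\sCat$. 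Fourth, invoke that $\sSet_{\ast}$ with the Quillen model structure is a simplicial (hence SM7) model category: the corresponding statement for $\overline{W}\N\iso\C_{\bullet}$ is classical, and since $\overline{W}\N\iso$ detects fibrations and weak equivalences by definition of the $\overline{W}$-structure on $\sCat$, the statement upstairs follows. Finally, the existence of the right Quillen adjoint is Lemma \ref{cotensrho} (the cotensor $(-)^{X_{\bullet}}$), and it is right Quillen precisely because its left adjoint is left Quillen.

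The main obstacle I anticipate is Step 3: showing that $\overline{W}\N\iso$ actually commutes (up to natural isomorphism, or at least weak equivalence) with the cotensor $(-)^{X_{\bullet}}$ built in Lemma \ref{cotensrho}. The tensor side is clean because $\rho=\pi_{\bullet}Dec$ is cartesian and $\C_{\bullet}\odot_\rho\Delta^n$ is a pushout, which $\diag\N\iso$ and $\overline{W}\N\iso$ handle well (colimits in $\sCat$ are levelwise and $\N\iso$ is computed levelwise); but $(-)^{X_{\bullet}}=\HOM_{\sCat}(\rho X_{\bullet},-)$ involves an internal hom in $\Cat$, and $\N\iso$ does not obviously commute with $\HOM_{\Cat}$ on the nose — one only gets that $\N\iso\HOM_{\Cat}(\pi K_{\bullet},\C)$ is the mapping space $\map(K_{\bullet},\N\iso\C)$ up to the usual subtleties about groupoid cores. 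The honest way around this is to bypass an explicit formula and argue adjointly: prove directly that $(-)^{X_{\bullet}}$ takes $\overline{W}$-(acyclic) fibrations to $\overline{W}$-(acyclic) fibrations by a lifting-property chase, reducing lifts against $\pi_{\bullet}Dec(\Lambda^n_i\hookrightarrow\Delta^n)$ (smashed with $X_{\bullet}$) in $\sCat_{\ast}$ to lifts in $\sSet_{\ast}$ after applying $\overline{W}\N\iso$ and using that $Dec$ is cartesian so that $Dec(\Lambda^n_i)\times Dec(X_{\bullet})\to Dec(\Delta^n)\times Dec(X_{\bullet})$ is still an acyclic $\overline{W}$-cofibration of bisimplicial sets. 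That is the computation I would be most careful with; everything else is bookkeeping with cofibrant generation and the already-established properties of the $\overline{W}$-structure.
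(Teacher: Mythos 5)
Your proposal is essentially correct and, once you abandon the detour in Steps 3--4, converges on the paper's own argument. You are right to flag the obstacle there: the paper never attempts to identify $\overline W\N\iso\bigl(\C_{\bullet}^{X_{\bullet}}\bigr)$ with a simplicial cotensor of $\overline W\N\iso\C_{\bullet}$, and the concern you name (that $\N\iso$ does not commute with $\HOM_{\Cat}$, since the groupoid core of a functor category is not the functor category of the cores) is precisely why such an identification is unavailable. Your fallback --- a direct lifting chase using that $Dec$ is cartesian and $\sSet$ is monoidal --- is what the paper does, but with one refinement you should make explicit: the paper never transports the lifting problem across $\overline W\N\iso$ into $\sSet_{\ast}$. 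It stays inside $\sCat_{\ast}$ and assembles the lift in stages, using (i) that $\pi_{\bullet}Dec$ preserves colimits, so that $\pi_{\bullet}Dec(X\land B_{+})$ is a pushout of $\pi_{\bullet}Dec(X\times B)_{+}$ along $\pi_{\bullet}Dec(\Delta^{0}\times B)_{+}\to\ast$; (ii) that the pushout-product $\mathbf{P}_{\bullet}\to\pi_{\bullet}Dec(X\times B)_{+}$ is a $\overline W$-(acyclic) cofibration because $Dec$ is cartesian, $\sSet$ is a monoidal model category, and $\pi_{\bullet}$, $Dec$, $(-)_{+}$ are left Quillen; and (iii) the universal properties of these two pushouts to extend a partial lift step by step (the arrows $(0)$ through $(3)$ in the paper's diagram). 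Simply saying ``reduce to a lift in $\sSet_{\ast}$ after applying $\overline W\N\iso$'' glosses over this staging, which is what actually makes the argument close. Both your fallback and the paper rest on the same two inputs, cartesianness of $Dec$ and monoidality of $\sSet$, so this is a presentational difference rather than a new idea. One minor gap in your Step 1: to deduce that $-\land X_{\bullet}$ is left Quillen for a fixed $X_{\bullet}$ from the pushout-product axiom you need that every object of $\sSet_{\ast}$ is cofibrant; that should be stated.
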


\begin{proof}
First, we simplify our notation, a simplicial set $X_{\bullet}$ will be denoted by $X$. Let $\C_{\bullet} $ an object of $\sCat$, then $\C_{\bullet +}\land X_{+}= (\C_{\bullet} \otimes X)_{+}$.
In order to show that $X\land - $ is a left Quillen functor, it is sufficient to show that the image of generating (acyclic) of $\sCat_{\ast}$ are (acyclic) cofibrations. We start with the case where $X$ has a disjoint base point. Consider the following pushout diagram:
  $$\xymatrix{
   \pi_{\bullet} Dec \Delta^{0}_{+}\land \pi_{\bullet} Dec A_{+} \ar[r] \ar[d] & \pi_{\bullet} Dec X_{+}\land \pi_{\bullet} Dec A_{+} \ar[d]\ar@/^/[rdd]  \\
    \pi_{\bullet} Dec \Delta^{0}_{+}\land \pi_{\bullet} Dec B_{+} \ar[r]  \ar@/_/[rrd]  & P  \ar@{.>}[rd] \\
     & &  \pi Dec X_{+}\land \pi_{\bullet} Dec B_{+}    
  }$$
  where $A\rightarrow B$ is an generating (acyclic) cofibration in $\sSet$. 
  The previous diagram is equivalent to the pushout diagram:
  $$\xymatrix{
   \pi_{\bullet} Dec (\Delta^{0}\times A)_{+} \ar@{^{(}->}[r] \ar@{^{(}->}[d] &\pi_{\bullet} Dec (X\times A)_{+} \ar@{^{(}->}[d]\ar@/^/[rdd]  \\
     \pi_{\bullet} Dec (\Delta^{0}\times B)_{+}\ar@{^{(}->}[r]  \ar@/_/[rrd]  & \mathbf{P}_{\bullet}  \ar@{^{(}.>}[rd] \\
     & &  \pi_{\bullet} Dec (X\times B)_{+}    
  }$$
 We have $\mathbf{P}_{\bullet} =\pi_{\bullet} Dec (\Delta^{0}\times B\sqcup_{\Delta\times A} X\times A)_{+}$ since $\pi_{\bullet}$ $Dec$ and $(-)_{\ast}$ commuteswith colimits, and the unique morphism  $\pi_{\bullet} Dec (\Delta^{0}\times B\sqcup_{\Delta^{0}\times A} X\times A)_{+}\rightarrow \pi Dec (X\times B)_{+}  $ is obviously an (acyclic) cofibration in $\sCat_{\ast}$ since $\sSet$ is monoidal model category, and $\pi_{\bullet}, ~(-)_{+},~Dec$ are left Quillen functors.
 Now, we have to show that $\pi_{\bullet} Dec (X\land A_{+})\rightarrow \pi_{\bullet} Dec (X\land B_{+})$ is a cofibration (acyclic cofibration) in $\sCat_{\ast} $ i.e., has a lifting property with respect to the acyclic fibrations  (resp. fibrations). 
  $$\xymatrix{
  \C_{\bullet} \ar@{->>}[r] & \D_{\bullet}
  }$$
  The following diagram summarize the situation:
   $$\xymatrix{
\pi_{\bullet} Dec (\Delta^{0}\times A)_{+} \ar@{^{(}->}[r]\ar@{^{(}->}[d] & \pi_{\bullet} Dec (X\times A)_{+} \ar@{^{(}->}[d]^-{g}   \ar[rr] &&   \pi_{\bullet} Dec (X\land A_{+} )  \ar[r] \ar[d] & \C \ar@{->>}[d]\\
 \pi_{\bullet} Dec (\Delta^{0}\times B)_{+}\ar@{^{(}->}[r] \ar@{.>}[rrrru]^-{(0)}&  \mathbf{P}_{\bullet} \ar@{^{(}->}[r]_-{f} \ar@{.>}[rrru]^-{(1)}& \pi_{\bullet} Dec (X\times B)_{+} \ar[r] \ar@{.>}[rru]^-{(2)} &  \pi_{\bullet} Dec (X\land B_{+})  \ar[r] \ar[r] \ar@{.>}[ru]^-{(3)} &\D_{\bullet} 
 }$$
The morphism $(0):  \pi_{\bullet} Dec (\Delta^{0}\times B)_{+}\rightarrow \C_{\bullet}$ is the obvious morphism which sends  everything to the base point of $\C$. The arrow $(1): \mathbf{P}_{\bullet}\rightarrow \C_{\bullet}$ is constructed by the universal property of the pushout
    $$\xymatrix{
   \pi_{\bullet} Dec \Delta^{0}_{+}\land \pi_{\bullet} Dec A_{+} \ar[r] \ar[d] & \pi_{\bullet} Dec X_{+}\land \pi Dec A_{+} \ar[d]\ar@/^/[rdd]  \\
    \pi_{\bullet} Dec \Delta^{0}_{+}\land \pi_{\bullet}Dec B_{+} \ar[r]  \ar@/_/[rrd]^{(0)}  & \mathbf{P}_{\bullet}  \ar@{.>}[rd]^{(1)} \\
     & & \C_{\bullet}   
  }$$
 Than, the arrow (2) is a lifting of the (acyclic) cofibration $f$. 
 
 Finally, we construct the third arrow  $(3): \pi_{\bullet} Dec (X\land B_{+}) \rightarrow \C_{\bullet}$ which makes the diagram commutes by the universal property of colimits. In fact,  the following diagram is a pushout in 
  $\sCat$.  
  
  $$\xymatrix{
  \pi_{\bullet}Dec(\Delta^{0}\times B)_{+}\ar[d]\ar[r] & \pi_{\bullet} Dec (X\times B)_{+}\ar[d]\ar@/^/[rdd]^{(2)}\\
  \ast\ar[r]\ar@/_/[rrd]^{(0)} & \pi_{\bullet}Dec(X\land B_{+}) \ar@{.>}[rd]^{(3)}\\
  && \C
  }$$
  
 %$$
 % \xymatrix{ 
%\pi_{\bullet} Dec (\Delta^{0}\times B)_{+} \ar[d] \ar[r]& \pi_{\bullet} Dec (X\times B)_{+} \ar[d] \ar@/^/[rdd]^{(2)}\\
  %  \ast \ar[r]\ar@/_/[rrd]^{(0)} &  \pi_{\bullet} Dec (X\land B_{+})  \ar@{.>}[rd]^{(3)}\\
   % & & \C  
   % }
%$$
 because the functor $\pi_{\bullet} Dec$ commutes with colimits. 
 
 we conclude that  $X_{\bullet}\land -$ is a left Quillen functor, consequently $(-)^{X_{\bullet}}$ is a right Quillen functor.
  \end{proof}

%sp(scat)
\section{Spectra $\Spn(\sCat_{\ast})$ and algebraic $\K$-theory}\label{section5}
This section is the outcome of this article. We define categories which look like \textbf{Waldhausen categories} and we will suggest a new definition of algebraic $\K$-theory for pointed simplicial categories. In what follow, $\mathcal{M}$  is a cofibrantly generated model category, cellular and left proper, equipped with a left Quillen endofunctor $\mathrm{T}:\mathcal{M}\rightarrow\mathcal{M}$ with a corresponding right Quillen adjoint $\mathrm{U}$.
\begin{definition}
Objects of  $\Spn(\mathcal{M},\mathrm{T})$ are sequences  $X=\{X_{0},X_{1},\dots X_{n},\dots\}$  of objects in $\mathcal{M}$, equipped with sequence of compatible structural morphisms $\sigma_{X}^{n}:\mathrm{T} X_{n}\rightarrow X_{n+1}$  for all 
$n\in\mathbb{N}.$
Morphisms in $\Spn(\mathcal{M},\mathrm{T})$ between $X=\{X_{0},X_{1},\dots X_{n},\dots\}$ and $Y=\{Y_{0},Y_{1},\dots Y_{n},\dots\}$ are degree wise morphisms in$\M$ which commutes with the structural morphisms
i.e., we have commutative diagrams for each natural number $n$:
$$
  \xymatrix{ 
  \mathrm{T}X_{n} \ar[r]^{\mathrm{T}f_{n}} \ar[d]^{\sigma_{X}} & \mathrm{T}Y_{n} \ar[d]^{\sigma_{Y}}\\
   X_{n+1} \ar[r]^{f_{n+1}} & Y_{n+1}.
   }
$$
\end{definition}
\begin{definition}
A $\mathrm{U}$-spectra in $\Spn(\mathcal{M},\mathrm{T})$ is a sequence $X=\{X_{0},X_{1},\dots X_{n},\dots\}$ such that  $X_{n}$ is fibrant in $\mathcal{M}$ for all $n$ and the adjoint map of $\sigma_{X}:\mathrm{T} X_{n}\rightarrow X_{n+1}$  i.e.,  $\tau_{X}:X_{n}\rightarrow \mathrm{U}X_{n+1}$ is a weak equivalence in $\M$ for all $n$. 
\end{definition}

\begin{theorem}\label{spectrification}

 There exists a stable model structure on the \textbf{category of spectra} $\Spn(\mathcal{M},\mathrm{T})$ where fibrant objects are $\mathrm{U}$-spectres.
\end{theorem}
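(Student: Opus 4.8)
The plan is to follow Hovey's construction of stable model structures on categories of spectra \cite{hovey2001}; the standing hypotheses (cellular, left proper, $\mathrm{T}$ a left Quillen endofunctor with right adjoint $\mathrm{U}$) are exactly what that construction requires. First I would equip $\Spn(\mathcal{M},\mathrm{T})$ with the \emph{projective} model structure, in which $f=\{f_{n}\}$ is a weak equivalence (resp. fibration) precisely when each $f_{n}$ is one in $\mathcal{M}$. For every $n\geq 0$ the evaluation functor $\mathrm{Ev}_{n}\colon\Spn(\mathcal{M},\mathrm{T})\to\mathcal{M}$ has a left adjoint $\mathrm{F}_{n}$, with $(\mathrm{F}_{n}A)_{m}=\mathrm{T}^{m-n}A$ for $m\geq n$ and the initial object for $m<n$, the structural maps being identities. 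Transferring the product model structure on $\prod_{n\geq 0}\mathcal{M}$ along the adjunction $\bigl(\coprod_{n}\mathrm{F}_{n}\bigr)\dashv\bigl(\prod_{n}\mathrm{Ev}_{n}\bigr)$ via Lemma \ref{lem1} gives a cofibrantly generated model structure with generating (trivial) cofibrations $\bigcup_{n}\mathrm{F}_{n}\mathrm{I}$ and $\bigcup_{n}\mathrm{F}_{n}\mathrm{J}$: hypotheses (2)--(4) of that lemma are immediate because colimits in $\Spn(\mathcal{M},\mathrm{T})$ are computed levelwise and $\mathrm{T}$ preserves trivial cofibrations, while hypothesis (1) follows from the smallness facts assembled in the appendix together with the observation that $\mathrm{F}_{n}$ is built from $\mathrm{T}$ and coproducts.

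Next I would observe that this projective structure is left proper and cellular. Left properness is tested levelwise and is inherited from $\mathcal{M}$. For cellularity: the domains and codomains of $\mathrm{F}_{n}\mathrm{I}$ are compact, being obtained from compact objects of $\mathcal{M}$ by the colimit-preserving, compactness-preserving functor $\mathrm{F}_{n}$; the domains of $\mathrm{F}_{n}\mathrm{J}$ are small relative to $\mathrm{F}_{n}\mathrm{I}$-cells for the same reason; and a projective cofibration is a levelwise cofibration, hence a levelwise effective monomorphism, hence an effective monomorphism in $\Spn(\mathcal{M},\mathrm{T})$ since limits too are computed levelwise. With cellularity and left properness established, Hirschhorn's localization theorem \cite{Hirs} applies, and I would left Bousfield localize the projective structure at the set of \emph{stabilization maps}
$$ S=\{\, \zeta_{n}^{A}\colon \mathrm{F}_{n+1}(\mathrm{T}A)\longrightarrow \mathrm{F}_{n}A \,\}, $$
where $n$ ranges over $\mathbb{N}$, $A$ ranges over the domains and codomains of $\mathrm{I}$, and $\zeta_{n}^{A}$ is the canonical map which is the identity in levels $\geq n+1$ and is the map out of the initial object into $A$ in level $n$. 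The resulting model structure is cofibrantly generated and left proper, has the same cofibrations as the projective one, and is by definition the \emph{stable} model structure on $\Spn(\mathcal{M},\mathrm{T})$.

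It then remains to identify its fibrant objects, which by Hirschhorn's theorem are the $S$-local projectively fibrant (i.e. levelwise fibrant) objects. A levelwise fibrant $X$ is $S$-local iff for each $n$ and each $A$ the map $\map(\mathrm{F}_{n}A,X)\to\map(\mathrm{F}_{n+1}(\mathrm{T}A),X)$ induced by $\zeta_{n}^{A}$ is a weak equivalence; by the adjunctions $\mathrm{F}_{k}\dashv\mathrm{Ev}_{k}$ and $\mathrm{T}\dashv\mathrm{U}$ this map is, up to natural weak equivalence, the map $\map(A,X_{n})\to\map(A,\mathrm{U}X_{n+1})$ in $\mathcal{M}$ induced by the adjoint structure map $\tau_{X}\colon X_{n}\to\mathrm{U}X_{n+1}$. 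Since $X_{n}$ and $\mathrm{U}X_{n+1}$ are fibrant and $A$ runs over the domains and codomains of the generating cofibrations of $\mathcal{M}$, $X$ is $S$-local iff $\tau_{X}$ is a weak equivalence for every $n$, i.e. iff $X$ is a $\mathrm{U}$-spectrum; this establishes the claim.

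The step I expect to demand real work, as opposed to the essentially bookkeeping nature of the above, is the verification that the localized structure genuinely deserves the adjective \emph{stable}: that the prolongation of $\mathrm{T}$ to $\Spn(\mathcal{M},\mathrm{T})$ becomes a Quillen equivalence, so that the homotopy category carries a triangulation. This is Hovey's deeper result; it exploits left properness together with the explicit shape of the $\zeta_{n}^{A}$ (one shows each $\zeta_{n}^{A}$ becomes a stable equivalence, whence $\mathrm{F}_{n+1}\mathrm{T}$ is identified, up to stable equivalence, with $\mathrm{F}_{n}$, forcing $\mathrm{T}$ to be invertible up to homotopy after localization). For this part I would simply invoke \cite{hovey2001} rather than reproduce the argument.
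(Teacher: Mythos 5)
Your proposal is correct and follows the same route as the paper, which simply cites Hovey \cite{hovey2001}, Theorem~3.4; what you have written is essentially a reconstruction of Hovey's own proof (projective model structure, cellularity and left properness, Bousfield localization at the stabilization maps $\zeta_n^A$, and identification of fibrant objects as $\mathrm{U}$-spectra via the adjunction). The only caveat is the same one Hovey himself handles: in forming the localizing set $S$ and computing the homotopy mapping spaces, the sources and targets of the $\zeta_n^A$ must be cofibrant (or cofibrantly replaced), but this is a bookkeeping matter rather than a gap.
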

\begin{proof}
See \cite{hovey2001} Theorem 3.4.
\end{proof}

In the model structure $\Spn(\mathcal{M},\mathrm{T})$, the left Quillen functor $\mathrm{T}: \mathcal{M}\rightarrow\mathcal{M}$ is extended to a left Quillen functor  $\mathrm{T}: \Spn(\mathcal{M},\mathrm{T})\rightarrow\Spn(\mathcal{M},\mathrm{T})$  which admits a right adjoint denotes by $s_{-}$.  such that  $(s_{-}X)_{n}=X_{n+1}$ for $n>0$. \\
The adjunction $(\mathrm{T}, s_{-})$ is a Quillen equivalence (cf \cite{hovey2001} Theorem 3.9). We should remark that the derived functor  $L\mathrm{T}$ became an invertible endofunctor in the homotopy category $\Ho\Spn(\mathcal{M},\mathrm{T})$.\\
We have also a Quillen adjunction between $\mathcal{M}$ and $\Spn(\mathcal{M},\mathrm{T})$ given by:
$$\xymatrix{
\mathcal{M} \ar@<1ex>[r]^-{\mathrm{T}^{\infty}} &\Spn(\mathcal{M},\mathrm{T}) \ar@<1ex>[l]^-{(-)_{0}}
}$$
where  $ \mathrm{T}^{\infty}(X)= \{X,\mathrm{T}X,\mathrm{T}\mathrm{T}X,\dots\} $ and $\sigma_{X}^{n}=id_{\mathrm{T}^{n+1}X}$. The functor $(-)_{0}$ associate to each spectra  
$X=\{X_{0},X_{1},\dots X_{n},\dots\}$ the object $X_{0}$.
\begin{theorem}\label{spectrificationscat}
There is a cofibrantly generated stable model category structure on $\Spn(\sCat_{\ast},\Sigma)$.
\end{theorem}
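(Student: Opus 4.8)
The plan is to invoke Theorem \ref{spectrification} with $\mathcal{M}=\sCat_{\ast}$ equipped with the $\overline{W}$-model structure and $\mathrm{T}=-\land S^{1}$, where $S^{1}=\Delta^{1}/\partial\Delta^{1}$ is the simplicial circle; its right adjoint $\mathrm{U}$ is the loop functor $(-)^{S^{1}}$ constructed in Lemma \ref{cotensrho}. To apply Theorem \ref{spectrification} we must check its standing hypotheses for $\sCat_{\ast}$, namely that $\sCat_{\ast}$ is a cofibrantly generated, cellular, left proper model category, and that $\mathrm{T}$ is a left Quillen endofunctor with a right Quillen adjoint $\mathrm{U}$. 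The first three properties are inherited from the corresponding properties of the $\overline{W}$-model structure on $\sCat$ established in Theorem \ref{newstructure}: pointing a cellular, left proper, cofibrantly generated model category via the adjunction $((-)_{+},U)$ preserves all of these, as recorded in \cite{Hovey}, chapter 6. The Quillen pair $(-\land S^{1},(-)^{S^{1}})$ is exactly the content of Theorem \ref{S-Quillen} specialised to $X_{\bullet}=S^{1}$, which is a pointed simplicial set. Hence all hypotheses of Theorem \ref{spectrification} are met, and it produces a stable model structure on $\Spn(\sCat_{\ast},\Sigma)$.

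The main work, then, is bookkeeping: first, I would spell out that the pointed $\overline{W}$-model structure on $\sCat_{\ast}$ is cofibrantly generated, with generating cofibrations $(\pi_{\bullet}Dec(\partial\Delta^{n})\to\pi_{\bullet}Dec(\Delta^{n}))_{+}$ together with $\emptyset\to\ast$ (or rather the standard pointed generators $\varnothing_{+}\to\Delta^{0}_{+}$ adjoined), and generating acyclic cofibrations $(\pi_{\bullet}Dec(\Lambda^{n}_{i})\to\pi_{\bullet}Dec(\Delta^{n}))_{+}$. Second, I would note that cellularity of $\sCat_{\ast}$ follows because $(-)_{+}$ is left adjoint to the forgetful functor $U$, which creates colimits and hence preserves and reflects compactness, smallness, and effective monomorphisms; the cofibrations of $\sCat_{\ast}$ are precisely the cofibrations of $\sCat$ with a disjoint basepoint adjoined, so the cellularity conditions of Definition \ref{cell} transfer directly from Corollary \ref{cellsacat}. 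Third, left properness of $\sCat_{\ast}$ follows from left properness of $\sCat$ (Corollary \ref{leftproper}) since weak equivalences, cofibrations, and pushouts in $\sCat_{\ast}$ are detected by the forgetful functor $U$.

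The remaining ingredient is that $\mathrm{T}=-\land S^{1}$ and $\mathrm{U}=(-)^{S^{1}}$ genuinely form a Quillen adjunction on $\sCat_{\ast}$ with the $\overline{W}$-model structure; this is Theorem \ref{S-Quillen}, whose proof used in an essential way that the left adjoint $\pi_{\bullet}Dec$ is cartesian, so that $-\land S^{1}$ preserves (acyclic) cofibrations. With $\mathcal{M}=\sCat_{\ast}$, $\mathrm{T}=-\land S^{1}$, $\mathrm{U}=(-)^{S^{1}}$ all verified, Theorem \ref{spectrification} applies verbatim and gives the cofibrantly generated stable model category structure on $\Spn(\sCat_{\ast},\Sigma)$, with fibrant objects the $\mathrm{U}$-spectra, i.e.\ the ``categorical $\Omega$-spectra'' $\D^{\bullet}_{\bullet}$ for which $\overline{W}\N\iso\D^{n}_{\bullet}\to (\overline{W}\N\iso\D^{n+1}_{\bullet})^{S^{1}}$ is a weak equivalence.

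The step I expect to be the main obstacle, or at least the one requiring the most care, is confirming that $\sCat_{\ast}$ is \emph{cellular}: Hirschhorn's cellularity axioms (compactness of domains/codomains of $\mathrm{I}$, smallness of domains of $\mathrm{J}$ relative to $\mathrm{I}$-cell, and cofibrations being effective monomorphisms) are somewhat delicate to transport across the $((-)_{+},U)$ adjunction, and one must be slightly careful that adjoining a disjoint basepoint interacts correctly with the equalizer computation underlying the effective-monomorphism condition. Everything else is a direct appeal to Theorems \ref{newstructure}, \ref{S-Quillen}, and \ref{spectrification}.
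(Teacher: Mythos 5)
Your proposal is correct and follows essentially the same route as the paper: invoke Hovey's stabilization theorem (\ref{spectrification}) for $\mathcal{M}=\sCat_{\ast}$ with $\mathrm{T}=-\land S^{1}$, using Theorem \ref{S-Quillen} to verify the Quillen pair and Theorems \ref{newstructure}, \ref{cellsacat}, \ref{leftproper} for the cellular/left-proper/cofibrantly-generated hypotheses. The only difference is that you spell out the pointing bookkeeping (transport of cellularity, properness, and generating sets across the $((-)_{+},U)$ adjunction) that the paper leaves implicit, which is a welcome expansion rather than a divergence in method.
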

\begin{proof}
The category  $\sCat_{\ast}$ verify the hypothesis of  \ref{spectrification} (cellular, left proper, cofibrantly generated), and the functor $\Sigma= -\land S^{1}: \sCat_{\ast}\rightarrow \sCat_{\ast}$, where $S^{1}$ is a simplicial model for a circle, is a left Quillen functor \ref{S-Quillen}, with a right adjoint denoted by $\Omega.$
We conclude the stable model structure on $\Spn(\sCat_{\ast},\Sigma)$ exists. 
\end{proof}
\begin{definition}\label{catwald}
A simplicial category is called a \textbf{weak complete Wladhausen category} if it is equivalent  to a $0$-object of some $\Omega$-spectre in the stable category of spectra $\Spn(\sCat_{\ast},\Sigma)$.
\end{definition}\label{faible}
In some sense, a weak Wladhausen category is an infinite loop space in the category of spectra
 $\sCat_{\ast}$.In order to justify this definition we compute the mapping space $\map_{\ast}$ of the model category  $\Spn(\sCat_{\ast},\Sigma).$
The following equivalences are a direct consequence of \ref{adjmap}.
 \begin{theorem}\label{adjmap}[\cite{DH2010}, Theorem 2.12.]
   Let the following Quillen adjunction between two model categories:

   $$\xymatrix{
\mathcal{C} \ar@<1ex>[r]^-{G} & \mathcal{M}. \ar@<1ex>[l]^-{F} 
}$$
then we have a natural ismorphism
$$ \map_{\mathcal{C}}(a,\mathrm{R}Fb)\rightarrow\map_{\mathcal{M}}(\mathrm{L}Ga,b)$$
in the category $\Ho(\sSet)$
\end{theorem}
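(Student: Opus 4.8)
The plan is to evaluate both homotopy function complexes by explicit (co)simplicial resolutions and to carry the ordinary adjunction bijection across degreewise. Since the claim concerns $\Ho(\sSet)$, fix functorial factorizations and replace $a$ by a cofibrant model $q_{a}\colon \tilde a \xrightarrow{\sim} a$ in $\mathcal{C}$ and $b$ by a fibrant model $r_{b}\colon b \xrightarrow{\sim} \tilde b$ in $\mathcal{M}$. Because $G$ is left Quillen, $G\tilde a$ is cofibrant and is a model for $\mathrm{L}Ga$; because $F$ is right Quillen, $F\tilde b$ is fibrant and is a model for $\mathrm{R}Fb$. Hence, up to the canonical weak equivalences in $\Ho(\sSet)$ induced by $q_{a}$ and $r_{b}$, it suffices to produce a natural weak equivalence $\map_{\mathcal{C}}(\tilde a, F\tilde b) \to \map_{\mathcal{M}}(G\tilde a, \tilde b)$.

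Choose a cosimplicial resolution $\tilde a^{\bullet}$ of $\tilde a$ in $\mathcal{C}$, i.e. a Reedy-cofibrant cosimplicial object together with a levelwise weak equivalence $\tilde a^{\bullet} \to c\tilde a$ onto the constant cosimplicial object; such resolutions exist and may be chosen functorially in $\tilde a$. Then $\map_{\mathcal{C}}(\tilde a, F\tilde b)$ is computed by the simplicial set $[n] \mapsto \homs_{\mathcal{C}}(\tilde a^{n}, F\tilde b)$, and the adjunction $G \dashv F$ gives, naturally in $[n]$, bijections $\homs_{\mathcal{C}}(\tilde a^{n}, F\tilde b) \cong \homs_{\mathcal{M}}(G\tilde a^{n}, \tilde b)$; assembling them yields an isomorphism of simplicial sets
\[
\homs_{\mathcal{C}}(\tilde a^{\bullet}, F\tilde b) \xrightarrow{\cong} \homs_{\mathcal{M}}(G\tilde a^{\bullet}, \tilde b).
\]
To conclude I would verify that $G\tilde a^{\bullet}$ is itself a cosimplicial resolution of $G\tilde a$ in $\mathcal{M}$: a left Quillen functor is a left adjoint carrying cofibrations to cofibrations, so it commutes with latching objects and preserves Reedy cofibrancy of cosimplicial objects, and by Ken Brown's lemma it preserves weak equivalences between cofibrant objects, so $G\tilde a^{\bullet} \to c(G\tilde a)$ remains a levelwise, hence Reedy, weak equivalence between Reedy-cofibrant objects. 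Therefore the right-hand side above computes $\map_{\mathcal{M}}(G\tilde a, \tilde b)$, and chaining the displayed isomorphism with the identifications above produces the asserted isomorphism in $\Ho(\sSet)$. Dually, one could instead take a simplicial resolution $\tilde b_{\bullet}$ of $\tilde b$ in $\mathcal{M}$, use that the right Quillen functor $F$ sends it to a simplicial resolution of $F\tilde b$, and run the same degreewise adjunction argument on the target side; both routes give the same morphism in $\Ho(\sSet)$.

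\emph{The real content is naturality and well-definedness.} To upgrade the construction to a natural isomorphism in $\Ho(\sSet)$ over $(a,b) \in \Ho(\mathcal{C})^{\mathrm{op}} \times \Ho(\mathcal{M})$, I would use functorial cofibrant and fibrant replacements and a functorial cosimplicial resolution, so that every object above is the value of a functor and every comparison arrow is a natural transformation; passing to $\Ho(\sSet)$ then turns the resulting natural weak equivalences into a natural isomorphism. The point requiring care — and the one I expect to be the main obstacle — is that homotopy function complexes are canonical only up to a contractible space of choices, so one must invoke the standard independence-of-resolution results (Hirschhorn, Dwyer--Kan) to know that the morphism in $\Ho(\sSet)$, together with the maps induced on it by morphisms of the arguments, is independent of the chosen resolutions and replacements. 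Granting this, the argument is precisely that of \cite{DH2010}, Theorem 2.12, which for the purposes of the present paper I would cite rather than reprove in full.
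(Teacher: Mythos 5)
Your argument is correct: evaluating both function complexes on a cosimplicial resolution of a cofibrant replacement of $a$ against a fibrant replacement of $b$, transporting the degreewise adjunction bijections, and checking that the left Quillen functor $G$ carries that resolution to a cosimplicial resolution of $G\tilde a$ (preservation of latching objects plus Ken Brown's lemma), with naturality and well-definedness handled by functorial replacements and the standard independence-of-resolution results, is exactly the standard proof of this comparison of homotopy function complexes. The paper itself gives no proof at all — it only cites \cite{DH2010}, Theorem 2.12 — so your sketch, which ends by deferring to that same citation, is strictly more detailed than what the paper provides and contains no gap.
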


 A consequence of theorem \ref{adjmap} in the case of  $\sCat$ where we consider the Quillen adjunction between $\sSet,~\sSet^{2}$ and $\sCat$ gives us the following result:
 \begin{corollary}
 Let $\C_{\bullet}$ be a fibrant object in $\sCat$ and $X\in\sSet.$ We have a natural ismorphism $\Ho(\sSet)$:
 $$ \map_{\sCat}(\pi d_{\ast}X,\C_{\bullet})\rightarrow \Map(X,\diag \N\iso \C_{\bullet}),$$
 where $\Map$ is the right adjoint to the cartesian product in $\sSet$.
 \end{corollary}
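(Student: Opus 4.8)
The plan is to derive the desired natural isomorphism in $\Ho(\sSet)$ by simply composing the two Quillen adjunctions that were set up in Sections 1 and 3 and then invoking Theorem~\ref{adjmap} twice. Recall from the construction of the diagonal model structure on $\sCat$ (Theorem~\ref{structure}) that there is a Quillen adjunction $\pi_{\bullet} d_{\ast} : \sSet \rightleftarrows \sCat : \diag\,\N\iso$ obtained by composing Moerdijk's adjunction $d_{\ast} : \sSet \rightleftarrows \sSet^{2} : \diag$ with the level-wise adjunction $\pi_{\bullet} : \sSet^{2} \rightleftarrows \sCat : \N\iso$. The left adjoint is $G = \diag\,\N\iso$ in the notation of Theorem~\ref{adjmap}? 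No---care is needed here with which functor plays the role of $F$ and which plays $G$: in Theorem~\ref{adjmap} the statement is $\map_{\mathcal{C}}(a, \mathrm{R}Fb) \cong \map_{\mathcal{M}}(\mathrm{L}Ga, b)$ where $G : \mathcal{C} \to \mathcal{M}$ is the left adjoint and $F : \mathcal{M} \to \mathcal{C}$ is the right adjoint. So I would take $\mathcal{C} = \sSet$, $\mathcal{M} = \sCat$, $G = \pi_{\bullet} d_{\ast}$ (left), $F = \diag\,\N\iso$ (right), $a = X$ and $b = \C_{\bullet}$.

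First I would note that since $\C_{\bullet}$ is fibrant in $\sCat$ and $\diag\,\N\iso$ is a right Quillen functor, we may replace $\mathrm{R}(\diag\,\N\iso)\C_{\bullet}$ by $\diag\,\N\iso\,\C_{\bullet}$ up to weak equivalence in $\sSet$, so that $\map_{\sCat}(\pi d_{\ast}X, \C_{\bullet})$---where by Theorem~\ref{adjmap} the relevant object on the left is $\map_{\sSet}(X, \mathrm{R}(\diag\,\N\iso)\C_{\bullet})$, but $\pi d_{\ast}X$ is already cofibrant and $\C_{\bullet}$ fibrant---gives directly $\map_{\sCat}(\mathrm{L}(\pi_{\bullet} d_{\ast})X, \C_{\bullet}) \cong \map_{\sSet}(X, \mathrm{R}(\diag\,\N\iso)\C_{\bullet})$. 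Then I would observe that the derived mapping space $\map_{\sSet}(X, \diag\,\N\iso\,\C_{\bullet})$ in the simplicial model category $\sSet$ is computed, since every object of $\sSet$ is cofibrant and $\diag\,\N\iso\,\C_{\bullet}$ is a Kan complex (being the diagonal of a level-wise nerve of a groupoid, hence fibrant---this uses that $\C_{\bullet}$ fibrant in $\sCat$ means exactly $\diag\,\N\iso\,\C_{\bullet}$ fibrant in $\sSet$ by Theorem~\ref{structure}), by the simplicial mapping object $\Map(X, \diag\,\N\iso\,\C_{\bullet})$. This identifies the right-hand side with the claimed $\Map(X, \diag\,\N\iso\,\C_{\bullet})$ in $\Ho(\sSet)$.

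The only genuinely delicate point is checking that $\diag\,\N\iso\,\C_{\bullet}$ is fibrant in $\sSet$ whenever $\C_{\bullet}$ is fibrant in $\sCat$, and that $\pi d_{\ast} X$ is cofibrant in $\sCat$ whenever $X \in \sSet$ (so that $\mathrm{L}(\pi_{\bullet}d_{\ast})X \simeq \pi d_{\ast}X$ and $\mathrm{R}(\diag\,\N\iso)\C_{\bullet} \simeq \diag\,\N\iso\,\C_{\bullet}$, making the derived mapping spaces computable without further (co)fibrant replacement). The first of these is immediate from clause (1) of Theorem~\ref{structure}: $f$ is a fibration in $\sCat$ iff $\diag\,\N\iso(f)$ is a fibration in $\sSet$, so the unique map $\C_{\bullet} \to \ast$ being a fibration means precisely $\diag\,\N\iso\,\C_{\bullet}$ is Kan. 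The second follows because, by clause (2) of Theorem~\ref{structure}, the generating cofibrations of $\sCat$ are $\pi_{\bullet}d_{\ast}$ applied to monomorphisms of $\sSet$, and every object $X$ is a colimit of such cofibrations out of $\emptyset$; since $\pi_{\bullet}d_{\ast}$ preserves colimits and cofibrations, $\pi d_{\ast}X$ is cofibrant. With these two observations in hand the statement reduces to a formal double application of Theorem~\ref{adjmap} (or really a single application, treating the composite adjunction $\pi_{\bullet}d_{\ast} \dashv \diag\,\N\iso$ as one Quillen adjunction), and the proof is essentially a one-line invocation of that theorem together with the recognition of the simplicial mapping object. I expect no real obstacle beyond bookkeeping; the main thing to get right is the direction of the adjunction and the fibrancy/cofibrancy bookkeeping just described.
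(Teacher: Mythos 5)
Your proposal is correct and follows essentially the same route as the paper: a single application of Theorem~\ref{adjmap} to the composite Quillen adjunction $\pi_{\bullet}d_{\ast}\dashv\diag\N\iso$, exploiting cofibrancy of $X$ in $\sSet$ and fibrancy of $\C_{\bullet}$ in $\sCat$ to drop derived replacements, followed by the identification of the derived mapping space in the simplicial model category $\sSet$ with the simplicial hom $\Map$. The additional fibrancy/cofibrancy bookkeeping you spell out is sound and is precisely what the paper leaves implicit in its two-line argument.
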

 \begin{proof}
The isomorphism $ \map_{\sCat}(\pi d_{\ast}X,\C_{\bullet})\rightarrow \map_{\sSet}(X,\diag \N\iso \C_{\bullet})$ is a direct consequence of \ref{adjmap}.
Since $\sSet$ is a simplicial model category we have that   
 $$\map_{\sSet}(X,\diag \N\iso \C_{\bullet})\simeq \Map(X,\diag \N\iso \C_{\bullet})$$
in $\Ho(\sSet)$.   
 
 \end{proof}

 There is a natural transformation between $\diag$ and the functor $\overline{W}$ which is a weak equivalence i.e., $\diag(X)\rightarrow \overline{W}(X)$ is a weak equivalence in $\sSet$, for any simplicial set $X$. \\
Let $\D_{\bullet}^{\bullet}=\{\D^{0}_{\bullet},\D^{1}_{\bullet},\dots \D^{n}_{\bullet},\dots\}$ be an $\Omega$-spectra in $\Spn(\sCat_{\ast},\Sigma)$. We have a the following corollaries:
\begin{corollary} 
Let $\C_{\bullet}$ be a simplicial (pointed) category in $\sCat$ equipped with the $\overline{W}$-model structure. 
The adjunction
$$\xymatrix{
\sSet\ar@<1ex>[r]^-{\pi~d_{\ast}} &\sCat\ar@<1ex>[l]^-{\diag\N\iso}
}$$
gives us the isomorphism  $\map_{\sCat}(\ast,\C_{\bullet})\sim \diag\N\iso\C_{\bullet}$ in $\Ho(\sSet).$
\end{corollary}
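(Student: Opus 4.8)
The plan is to deduce this corollary from the previous one by specializing $X$ to be the terminal simplicial set $\Delta^0 = \ast$, and then identifying the resulting mapping space. First I would observe that $\pi\, d_\ast(\ast)$ is the terminal object of $\sCat$: the functor $d_\ast$ preserves the terminal object (it is a right adjoint in the adjunction $(\diag, d_\ast)$, and more directly $d_\ast \Delta^0 = \Delta^{0,0}$), and $\pi_\bullet$ applied levelwise to $\Delta^{0,0}$ gives the one-object, one-morphism category in each simplicial degree, which is precisely $\ast$ in $\sCat$. Hence the isomorphism from the preceding corollary, with $X = \ast$, reads
$$\map_{\sCat}(\ast,\C_{\bullet}) \simeq \Map(\ast,\diag\N\iso\C_{\bullet})$$
in $\Ho(\sSet)$.

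Next I would simplify the right-hand side: $\Map(\ast, Y) \cong Y$ naturally for any simplicial set $Y$, since $\ast = \Delta^0$ is the monoidal unit for the cartesian product on $\sSet$ and $\Map$ is the corresponding internal hom. Applying this with $Y = \diag\N\iso\C_{\bullet}$ gives $\Map(\ast,\diag\N\iso\C_{\bullet}) \cong \diag\N\iso\C_{\bullet}$, and composing with the displayed isomorphism yields $\map_{\sCat}(\ast,\C_{\bullet}) \simeq \diag\N\iso\C_{\bullet}$ in $\Ho(\sSet)$, as claimed. One should note that the statement is phrased for the $\overline{W}$-model structure on $\sCat$; here I would invoke the fact (Theorem \ref{newstructure}) that the $\overline{W}$-model structure and the diagonal model structure on $\sCat$ have the same weak equivalences and are Quillen equivalent via the identity functor, so the mapping space $\map_{\sCat}(\ast,\C_{\bullet})$ is the same object of $\Ho(\sSet)$ computed in either structure; alternatively, using the natural weak equivalence $\diag(X) \to \overline{W}(X)$ mentioned just above, one has $\diag\N\iso\C_{\bullet} \simeq \overline{W}\N\iso\C_{\bullet}$ and the argument goes through verbatim with $\overline{W}$ in place of $\diag$.

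The only genuine point requiring care — and the step I expect to be the main obstacle — is the fibrancy hypothesis. The preceding corollary requires $\C_{\bullet}$ to be a fibrant object of $\sCat$, and $\map$ between a cofibrant source and a fibrant target is what computes the derived mapping space. Since $\ast = \pi\,d_\ast(\ast)$ is a cofibrant object (it is $\pi_\bullet d_\ast$ of the cofibrant simplicial set $\ast$, and $\pi_\bullet d_\ast$ sends cofibrations to cofibrations by Theorem \ref{structure}(2)), the source poses no problem; but one must either assume $\C_{\bullet}$ fibrant or replace it by a fibrant model, in which case $\diag\N\iso$ (resp. $\overline{W}\N\iso$) must be applied to the fibrant replacement for the identification to be correct in $\Ho(\sSet)$. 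I would state the corollary with the standing hypothesis that $\C_{\bullet}$ is fibrant (consistent with how it is used in the sequel for $\Omega$-spectra, whose levels are fibrant) and remark that in general one replaces $\C_{\bullet}$ by $\C_{\bullet}^{\mathrm{fib}}$. With that caveat in place the proof is a two-line specialization of the previous corollary together with the triviality $\Map(\ast,-) = \mathrm{id}$.
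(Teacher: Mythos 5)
Your proof is correct and matches the (unstated but clearly intended) derivation in the paper: specialize the preceding corollary to $X=\ast$, identify $\pi_\bullet d_\ast(\ast)$ with the terminal (and initial base-point) object of $\sCat$, and use $\Map(\ast,-)\cong\mathrm{id}$. You were also right to flag the mismatch between the $\overline{W}$-hypothesis in the statement and the $(\pi_\bullet d_\ast,\diag\N\iso)$ adjunction in the display, and your two ways of resolving it (same weak equivalences in both structures, or the natural weak equivalence $\diag\Rightarrow\overline{W}$) are exactly the points the paper relies on implicitly; likewise your caveat about fibrancy of $\C_\bullet$ is the correct one and is what the paper tacitly assumes when applying this to $\Omega$-spectra.
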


\begin{corollary} 
 If we denote by $S^{0}$ the constant simplicial category $\ast\sqcup\ast$, then the adjunction:
$$\xymatrix{
\sCat\ar@<1ex>[r]^-{(-)_{+}} &\sCat_{\ast}\ar@<1ex>[l]^-{F}
}$$
gives us the isomorphism  $\map_{\sCat_{\ast}}(S^{0},\C_{\bullet})\simeq\map_{\sCat}(\ast,\C_{\bullet})\simeq \diag\N\iso\C_{\bullet}$ in $\Ho(\sSet).$
\end{corollary}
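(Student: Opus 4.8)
The plan is to prove the chain of isomorphisms
$$\map_{\sCat_{\ast}}(S^{0},\C_{\bullet})\simeq\map_{\sCat}(\ast,\C_{\bullet})\simeq \diag\N\iso\C_{\bullet}$$
in $\Ho(\sSet)$ by reducing the second isomorphism to the previous corollary and the first to Theorem \ref{adjmap}. First I would observe that the second equivalence is \emph{not} new: in the previous corollary we already established $\map_{\sCat}(\ast,\C_{\bullet})\simeq\diag\N\iso\C_{\bullet}$, by applying Theorem \ref{adjmap} to the composite Quillen adjunction $\pi\,d_{\ast}\dashv\diag\N\iso$ between $\sSet$ and $\sCat$ at the object $\ast=\pi\,d_{\ast}(\Delta^{0})$ (note $d_{\ast}\Delta^{0}=\Delta^{0,0}$ and $\pi$ of the point is the terminal category, so $\mathrm{L}(\pi\,d_{\ast})(\Delta^{0})\simeq\ast$ since $\Delta^{0}$ is cofibrant), together with the simplicial-model-category identification $\map_{\sSet}(\Delta^{0},K)\simeq\Map(\Delta^{0},K)\simeq K$ for fibrant $K$. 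So the real content here is the first isomorphism.

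For the first isomorphism, I would apply Theorem \ref{adjmap} to the Quillen adjunction
$$\xymatrix{
\sCat\ar@<1ex>[r]^-{(-)_{+}} &\sCat_{\ast}\ar@<1ex>[l]^-{F}
}$$
where $F$ is the forgetful functor. By construction of the pointed model structure on $\sCat_{\ast}$, weak equivalences and fibrations are detected by $F$, so $F$ is right Quillen and preserves \emph{all} weak equivalences; in particular $\mathrm{R}F\C_{\bullet}\simeq F\C_{\bullet}$, the underlying unpointed simplicial category. The left adjoint $(-)_{+}$ adds a disjoint basepoint, i.e. $\C_{\bullet+}=\C_{\bullet}\sqcup\ast$. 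Since $S^{0}$ is defined as the constant simplicial category $\ast\sqcup\ast$, we have $S^{0}=(\ast)_{+}$, and as $\ast$ (the terminal category) is cofibrant in $\sCat$, we get $\mathrm{L}(-)_{+}(\ast)\simeq(\ast)_{+}=S^{0}$. Theorem \ref{adjmap} then yields
$$\map_{\sCat_{\ast}}(S^{0},\mathrm{R}F\C_{\bullet})\simeq\map_{\sCat_{\ast}}((\ast)_{+},\C_{\bullet})\simeq\map_{\sCat}(\ast,\mathrm{R}F\C_{\bullet})\simeq\map_{\sCat}(\ast,\C_{\bullet})$$
in $\Ho(\sSet)$, which is precisely the first claimed isomorphism once one remarks that $\mathrm{R}F\C_{\bullet}$ and $\C_{\bullet}$ agree in the homotopy category (as $F$ creates weak equivalences).

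Concatenating the two steps gives the full statement. The only point requiring a little care — and the main (minor) obstacle — is the bookkeeping around fibrant replacement: Theorem \ref{adjmap} is stated for $\map_{\mathcal{C}}(a,\mathrm{R}Fb)$, and the corollary asserts an equivalence with $\diag\N\iso\C_{\bullet}$, so one should either assume $\C_{\bullet}$ is fibrant (as in the hypothesis of the previous corollary) or note that $\diag\N\iso$ takes a fibrant replacement of $\C_{\bullet}$ to a fibrant replacement of $\diag\N\iso\C_{\bullet}$ in $\sSet$, so the identification $\map_{\sSet}(\Delta^{0},-)\simeq(-)$ is applied to the correct object. Beyond that, everything is a formal consequence of Theorem \ref{adjmap}, the compatibility of $(-)_{+}$, $F$ with (co)limits and weak equivalences, and the previous corollary; no new homotopy-theoretic input is needed.
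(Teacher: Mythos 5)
Your proof is correct and follows the same route the paper intends: the paper offers no separate argument for this corollary precisely because it is a direct instance of Theorem \ref{adjmap} applied to the adjunction $(-)_{+}\dashv F$ (with $S^{0}=(\ast)_{+}$, $\ast$ cofibrant, and $F$ creating weak equivalences so that $\mathrm{R}F\simeq F$), chained with the identification $\map_{\sCat}(\ast,\C_{\bullet})\simeq\diag\N\iso\C_{\bullet}$ from the preceding corollary. Your remarks about derived functors and fibrant replacement are tidy but add nothing beyond what the paper leaves implicit.
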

\begin{corollary}  
The adjunction
$$\xymatrix{
\sCat_{\ast} \ar@<1ex>[r]^-{\Sigma^{\infty}} &\Spn(\sCat_{\ast},\Sigma) \ar@<1ex>[l]^-{(-)_{0}}
}$$
gives us the isomorphism $\map_{\Spn(\sCat_{\ast})}(\Sigma^{\infty}S^{0},\D_{\bullet}^{\bullet})\simeq\map_{\sCat_{\ast}}(S^{0},\D^{0}_{\bullet})\simeq \diag\N\iso\D^{0}_{\bullet}.$
\end{corollary}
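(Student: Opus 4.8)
The plan is to derive both isomorphisms by composing Theorem \ref{adjmap} with the corollary immediately preceding this one. For the first isomorphism, note that in the notation of Theorem \ref{spectrification} the functor $\Sigma^{\infty}$ is precisely $\mathrm{T}^{\infty}$ for $\mathrm{T}=\Sigma=-\land S^{1}$, which is a left Quillen endofunctor of $\sCat_{\ast}$ by Theorem \ref{S-Quillen}; hence $(\Sigma^{\infty},(-)_{0})$ is a Quillen adjunction between $\sCat_{\ast}$ (with the $\overline{W}$-model structure) and the stable model category $\Spn(\sCat_{\ast},\Sigma)$ of Theorem \ref{spectrificationscat}. Applying Theorem \ref{adjmap} with $\mathcal{C}=\sCat_{\ast}$, $\mathcal{M}=\Spn(\sCat_{\ast},\Sigma)$, $G=\Sigma^{\infty}$, $F=(-)_{0}$, $a=S^{0}$ and $b=\D_{\bullet}^{\bullet}$ yields a natural isomorphism $\map_{\sCat_{\ast}}(S^{0},\mathrm{R}(-)_{0}\,\D_{\bullet}^{\bullet})\cong\map_{\Spn(\sCat_{\ast})}(\mathrm{L}\Sigma^{\infty}S^{0},\D_{\bullet}^{\bullet})$ in $\Ho(\sSet)$. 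Since $\D_{\bullet}^{\bullet}$ is taken to be an $\Omega$-spectrum, i.e. a fibrant object of $\Spn(\sCat_{\ast},\Sigma)$, its levels are fibrant and $\mathrm{R}(-)_{0}\,\D_{\bullet}^{\bullet}\simeq(-)_{0}\,\D_{\bullet}^{\bullet}=\D_{\bullet}^{0}$; and since $S^{0}=(\ast)_{+}=\pi_{\bullet}Dec(\partial\Delta^{0})_{+}$-free-on-$\pi_{\bullet}Dec(\Delta^{0})$ is cofibrant in $\sCat_{\ast}$ (the empty category is a $\overline{W}$-cofibrant subcategory of $\ast$, and $(-)_{+}$ is left Quillen), $\mathrm{L}\Sigma^{\infty}S^{0}\simeq\Sigma^{\infty}S^{0}$. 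This gives $\map_{\Spn(\sCat_{\ast})}(\Sigma^{\infty}S^{0},\D_{\bullet}^{\bullet})\simeq\map_{\sCat_{\ast}}(S^{0},\D_{\bullet}^{0})$.

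For the second isomorphism I would simply invoke the preceding corollary, which identifies $\map_{\sCat_{\ast}}(S^{0},\C_{\bullet})$ with $\diag\N\iso\C_{\bullet}$ in $\Ho(\sSet)$ for $\C_{\bullet}$ fibrant. The zeroth term $\D_{\bullet}^{0}$ of the $\Omega$-spectrum $\D_{\bullet}^{\bullet}$ is fibrant in $\sCat_{\ast}$, so the corollary applies and yields $\map_{\sCat_{\ast}}(S^{0},\D_{\bullet}^{0})\simeq\diag\N\iso\D_{\bullet}^{0}$. Chaining the two isomorphisms in $\Ho(\sSet)$ completes the argument.

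The only genuinely delicate points — and hence the main obstacle — lie in the derived-functor bookkeeping of the first step: one must verify that no fibrant replacement is needed when evaluating $\mathrm{R}(-)_{0}$ on $\D_{\bullet}^{\bullet}$ (this is exactly the statement that a fibrant, i.e. $\Omega$-, spectrum is levelwise fibrant, built into the definition of the stable structure in Theorem \ref{spectrification}), and that no cofibrant replacement is needed for $\mathrm{L}\Sigma^{\infty}$ on $S^{0}$. One should also record that the mapping spaces above are computed with respect to the $\overline{W}$-model structure used to build the spectra, and that the preceding corollary is compatible with this, the bridge being the natural weak equivalence $\diag(X)\to\overline{W}(X)$ of simplicial sets together with the Quillen equivalence between the $\overline{W}$- and diagonal model structures on $\sCat$ furnished by Theorem \ref{newstructure}; since all of these comparisons are canonical isomorphisms in $\Ho(\sSet)$, the final composite is well defined and the statement follows.
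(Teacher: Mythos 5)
Your proposal is correct and follows the paper's own (largely implicit) argument: invoke Theorem \ref{adjmap} for the Quillen adjunction $(\Sigma^{\infty},(-)_{0})$, use that the $\Omega$-spectrum $\D^{\bullet}_{\bullet}$ is levelwise fibrant and $S^{0}$ is cofibrant to dispense with the derived-functor corrections, and then chain with the preceding corollary identifying $\map_{\sCat_{\ast}}(S^{0},\C_{\bullet})$ with $\diag\N\iso\C_{\bullet}$. The extra care you take with $\mathrm{L}\Sigma^{\infty}$, $\mathrm{R}(-)_{0}$, and the $\diag$ versus $\overline{W}$ comparison is just the explicit bookkeeping the paper leaves to the reader.
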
 
\begin{corollary} 
The adjunction 
$$\xymatrix{
\Spn(\sCat_{\ast},\Sigma) \ar@<1ex>[r]^-{\Sigma} &\Spn(\sCat_{\ast},\Sigma) \ar@<1ex>[l]^-{s_{-}}
}$$
induces an isomorphism
 $$\map_{\Spn(\sCat_{\ast})}(\Sigma\Sigma^{\infty}S^{0},\D_{\bullet}^{\bullet})\simeq\map_{\Spn(\sCat_{\ast})}(\Sigma^{\infty}S^{0},s_{-}\D_{\bullet}^{\bullet})\simeq\map_{\sCat_{\ast}}(S^{0},\D^{1}_{\bullet})\simeq\diag\N\iso\D^{1}_{\bullet}$$
and more generally 
 $$   \map_{\Spn(\sCat_{\ast})}(\Sigma^{n}\Sigma^{\infty}S^{0},\D_{\bullet}^{\bullet})\simeq\map_{\sCat_{\ast}}(S^{0},\D^{n}_{\bullet})\simeq\diag\N\iso\D^{n}_{\bullet} .$$
 \end{corollary}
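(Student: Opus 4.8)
The plan is to produce the displayed chain of equivalences in $\Ho(\sSet)$ by concatenating the mapping-space adjunction of Theorem~\ref{adjmap} with the computations already carried out for $\Sigma^{\infty}S^{0}$. First I would handle the initial isomorphism. The pair $(\Sigma,s_{-})$ is a Quillen adjunction on $\Spn(\sCat_{\ast},\Sigma)$; moreover $\Sigma^{\infty}S^{0}$ is cofibrant, being the image of the cofibrant object $S^{0}$ of $\sCat_{\ast}$ under the left Quillen functor $\Sigma^{\infty}$, and $\D_{\bullet}^{\bullet}$ is fibrant, since an $\Omega$-spectrum is a $\mathrm{U}$-spectrum and hence a fibrant object of the stable structure (Theorem~\ref{spectrification}). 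Therefore on these arguments the derived functors $\mathrm{L}\Sigma$ and $\mathrm{R}s_{-}$ coincide with $\Sigma$ and $s_{-}$, and Theorem~\ref{adjmap} gives
$$\map_{\Spn(\sCat_{\ast})}(\Sigma\Sigma^{\infty}S^{0},\D_{\bullet}^{\bullet})\simeq \map_{\Spn(\sCat_{\ast})}(\Sigma^{\infty}S^{0},s_{-}\D_{\bullet}^{\bullet}).$$

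Next I would identify the right-hand term. The crucial observation is that $s_{-}\D_{\bullet}^{\bullet}$ is itself an $\Omega$-spectrum: $s_{-}$ is right Quillen, hence preserves fibrant objects, and since $(s_{-}\D_{\bullet}^{\bullet})_{n}=\D_{\bullet}^{n+1}$ with unchanged structure maps, the weak equivalences $\tau\colon \D_{\bullet}^{n+1}\to\Omega\D_{\bullet}^{n+2}$ witnessing the $\Omega$-spectrum condition are inherited. In particular its $0$-object is $\D_{\bullet}^{1}$. Feeding $s_{-}\D_{\bullet}^{\bullet}$ into the corollary that computes $\map_{\Spn(\sCat_{\ast})}(\Sigma^{\infty}S^{0},-)$ on $\Omega$-spectra via the adjunction $(\Sigma^{\infty},(-)_{0})$ gives $\map_{\Spn(\sCat_{\ast})}(\Sigma^{\infty}S^{0},s_{-}\D_{\bullet}^{\bullet})\simeq\map_{\sCat_{\ast}}(S^{0},\D_{\bullet}^{1})$, and the corollary computing $\map_{\sCat_{\ast}}(S^{0},-)$ via $((-)_{+},F)$ together with the corollary for $\map_{\sCat}(\ast,-)$ identifies this with $\diag\N\iso\D_{\bullet}^{1}$.

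For the general statement I would simply iterate. The adjunction $(\Sigma^{n},(s_{-})^{n})$ is again Quillen, $\Sigma^{n}\Sigma^{\infty}S^{0}$ is cofibrant (a left Quillen functor preserves cofibrant objects), and $(s_{-})^{n}\D_{\bullet}^{\bullet}$ is an $\Omega$-spectrum with $0$-object $\D_{\bullet}^{n}$ by the argument above. Running the same three steps yields
$$\map_{\Spn(\sCat_{\ast})}(\Sigma^{n}\Sigma^{\infty}S^{0},\D_{\bullet}^{\bullet})\simeq\map_{\Spn(\sCat_{\ast})}(\Sigma^{\infty}S^{0},(s_{-})^{n}\D_{\bullet}^{\bullet})\simeq\map_{\sCat_{\ast}}(S^{0},\D_{\bullet}^{n})\simeq\diag\N\iso\D_{\bullet}^{n}.$$
I expect the only point needing genuine care --- the ``hard part'' --- to be the bookkeeping of (co)fibrancy ensuring that no derived-functor corrections appear at any stage of the concatenation: namely that $\Sigma^{n}\Sigma^{\infty}S^{0}$ remains cofibrant and, above all, that the $\Omega$-spectrum condition on $\D_{\bullet}^{\bullet}$ is preserved under repeated application of $s_{-}$, which is exactly the stability observation made above. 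Everything else is a formal splicing of isomorphisms already established in this section.
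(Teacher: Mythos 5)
Your proof is correct and follows the same route the paper intends: string together Theorem~\ref{adjmap} applied to the adjunctions $(\Sigma,s_{-})$, $(\Sigma^{\infty},(-)_{0})$, and $((-)_{+},F)$, then iterate. The only thing you add beyond what the paper (tersely) indicates is the explicit bookkeeping that $\Sigma^{n}\Sigma^{\infty}S^{0}$ stays cofibrant and that $s_{-}$ preserves $\Omega$-spectra so that $(s_{-})^{n}\D^{\bullet}_{\bullet}$ has fibrant $0$-object $\D^{n}_{\bullet}$; this is precisely what makes the chain of isomorphisms in $\Ho(\sSet)$ legitimate, and the paper leaves it implicit.
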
 
 \begin{remark}
 Let $S^{n}$ be a simplicial model for the sphere of dimension $n$, then $\pi_{\bullet} d_{\ast}S^{n}$ is a simplicial category and $\Sigma(\pi_{\bullet} d_{\ast}S^{n})\simeq \pi_{\bullet} d_{\ast}S^{n+1}$.
 \end{remark}
By definition of  $\map_{\sCat_{\ast}}$ and the fact that  $\D^{n}_{\bullet}\rightarrow \Omega\D^{n+1}_{\bullet}$ is an equivalence in $\sCat_{\ast}$ between fibrant objects, we deduce the following corollary. 
 \begin{corollary} \label{corwaldhausen}
 Using the precedent Quillen adjunctions and \ref{adjmap}, we have the following isomorphisms in $\Ho(\sSet):$
 \begin{enumerate}

 \item $\map_{\sCat_{\ast}}(\Sigma S^{0},\D^{n+1}_{\bullet})\simeq\map_{\sCat_{\ast}}(S^{0},\Omega\D^{n+1}_{\bullet})\simeq\diag\N\iso\Omega\D^{n+1}_{\bullet}.$

\item  $\map_{\sCat_{\ast}}(\Sigma S^{0},\D^{n+1}_{\bullet})\simeq\map_{\sCat_{\ast}}(\pi_{\bullet}d_{\ast}S^{1},\D^{n+1}_{\bullet})\simeq \Omega \diag\N\iso\D^{n+1}_{\bullet}.$

\item  $\map_{\sCat_{\ast}}(\Sigma S^{0},\D^{n+1}_{\bullet})\simeq \map_{\sCat_{\ast}}(S^0,\Omega\D^{n+1}_{\bullet})\simeq\map_{\sCat_{\ast}}(S^0,\D^{n}_{\bullet})\simeq\diag\N\iso\D^{n}_{\bullet}.$\\\\
 \end{enumerate}

 \end{corollary}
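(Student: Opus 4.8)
The final statement to prove is Corollary \ref{corwaldhausen}, which provides isomorphisms in $\Ho(\sSet)$ relating the mapping space $\map_{\sCat_{\ast}}(\Sigma S^0,\D^{n+1}_{\bullet})$ to $\diag\N\iso$ applied to various objects. The plan is to assemble these from the chain of Quillen adjunctions already established, together with Theorem \ref{adjmap} (the compatibility of mapping spaces with Quillen adjunctions) and the defining property of $\Omega$-spectra.

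\textbf{Step 1: The adjunction $(\Sigma,\Omega)$ on $\sCat_{\ast}$.} For item (1), I would apply Theorem \ref{adjmap} to the Quillen pair $\Sigma = -\land S^1 \dashv \Omega$ on $\sCat_{\ast}$ furnished by Theorem \ref{S-Quillen}. Since $\D^{n+1}_{\bullet}$ is fibrant (being a term of an $\Omega$-spectrum), $\mathrm{R}\Omega\D^{n+1}_{\bullet}\simeq\Omega\D^{n+1}_{\bullet}$, and since $S^0$ is cofibrant, $\mathrm{L}\Sigma S^0\simeq\Sigma S^0$; this gives the first isomorphism $\map_{\sCat_{\ast}}(\Sigma S^0,\D^{n+1}_{\bullet})\simeq\map_{\sCat_{\ast}}(S^0,\Omega\D^{n+1}_{\bullet})$. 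The second isomorphism $\map_{\sCat_{\ast}}(S^0,\Omega\D^{n+1}_{\bullet})\simeq\diag\N\iso\Omega\D^{n+1}_{\bullet}$ is exactly the corollary proved just above (using the adjunctions $\sSet\rightleftarrows\sCat$ and $(-)_+\dashv F$), provided $\Omega\D^{n+1}_{\bullet}$ is fibrant in $\sCat_{\ast}$ — which holds because $\Omega$ is a right Quillen functor and $\D^{n+1}_{\bullet}$ is fibrant.

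\textbf{Step 2: Identifying $\Sigma S^0$ with $\pi_{\bullet}d_{\ast}S^1$ and computing via the $\sSet$-adjunction.} For item (2), I would first observe, as in the remark preceding the corollary, that $\Sigma S^0 = S^0\land S^1 \simeq \pi_{\bullet}d_{\ast}S^1$ in $\Ho(\sCat_{\ast})$ (or $\pi_{\bullet}Dec\,S^1$ in the $\overline{W}$-structure — one must be careful which model is used, but the two structures share weak equivalences, so it is harmless). Then the corollary on $\map_{\sCat}(\pi d_{\ast}X,\C_{\bullet})\simeq\Map(X,\diag\N\iso\C_{\bullet})$, applied with $X=S^1$ and $\C_{\bullet}=\D^{n+1}_{\bullet}$ fibrant, gives $\map_{\sCat_{\ast}}(\pi_{\bullet}d_{\ast}S^1,\D^{n+1}_{\bullet})\simeq\Map(S^1,\diag\N\iso\D^{n+1}_{\bullet})=\Omega\diag\N\iso\D^{n+1}_{\bullet}$, since mapping out of $S^1$ in $\sSet$ computes the loop space. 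Combining with Step 1 yields (2).

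\textbf{Step 3: Using the $\Omega$-spectrum structure.} For item (3), the key input is that $\D^{\bullet}_{\bullet}$ is an $\Omega$-spectrum, so the structure map $\D^n_{\bullet}\to\Omega\D^{n+1}_{\bullet}$ is a weak equivalence between fibrant objects in $\sCat_{\ast}$; hence $\map_{\sCat_{\ast}}(S^0,\Omega\D^{n+1}_{\bullet})\simeq\map_{\sCat_{\ast}}(S^0,\D^n_{\bullet})$ in $\Ho(\sSet)$, since $\map_{\sCat_{\ast}}(S^0,-)$ is homotopy-invariant on fibrant objects (it is the derived mapping space). The last identification $\map_{\sCat_{\ast}}(S^0,\D^n_{\bullet})\simeq\diag\N\iso\D^n_{\bullet}$ is again the corollary from Step 1. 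Chaining these with Step 1 gives (3).

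The main obstacle, such as it is, is bookkeeping: making sure every object to which we apply $\map_{\sCat_{\ast}}(S^0,-)\simeq\diag\N\iso(-)$ is genuinely fibrant (so that the derived mapping space agrees with the undervied one and with $\diag\N\iso$), that the correct model structure ($\overline{W}$ versus diagonal) is invoked where Theorem \ref{S-Quillen} is used, and that the cofibrancy of $S^0=\ast\sqcup\ast$ and $\pi_{\bullet}d_{\ast}S^1$ (respectively $\pi_{\bullet}Dec\,S^1$) is in force so that $\mathrm{L}$ of the relevant left adjoints acts as the identity. None of these presents a conceptual difficulty — each is either immediate from the Quillen-functor status of the relevant adjoints or from the hypothesis that $\D^{\bullet}_{\bullet}$ is an $\Omega$-spectrum — but they must be tracked carefully to make the chain of equivalences in $\Ho(\sSet)$ rigorous.
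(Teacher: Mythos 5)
Your proof is correct and follows the same route the paper indicates: the paper's own "proof" is a single preamble sentence ("By definition of $\map_{\sCat_{\ast}}$ and the fact that $\D^{n}_{\bullet}\rightarrow \Omega\D^{n+1}_{\bullet}$ is an equivalence in $\sCat_{\ast}$ between fibrant objects...") plus an appeal to the chain of Quillen adjunctions from \ref{adjmap} and the immediately preceding corollaries, and your three steps are exactly an unpacking of that. Your bookkeeping caveats (fibrancy of $\Omega\D^{n+1}_{\bullet}$, cofibrancy of $S^{0}$ and of $\pi_{\bullet}d_{\ast}S^{1}$, $\overline{W}$ vs.\ diagonal structure, and the need to use the pointed $\map_{\ast}$/$\Omega$ rather than the unpointed $\Map$/free-loop in Step~2) are precisely the points the paper glosses over with the same degree of informality, so the match is essentially exact.
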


%\begin{corollary} 
% $$\Omega\map_{\ast}(S^0,\D^{n+1}_{\bullet})\simeq\Omega\diag\N\iso\D^{n+1}_{\bullet}\simeq\diag\N\iso\D^{n}_{\bullet}\simeq\diag\N\iso\Omega\D^{n+1}_{\bullet}.$$
 %\simeq \map_{\ast}(S^0,\Omega\D^{n+1}_{\bullet}).$$
%\end{corollary} 

\subsection{Algebraic $\K$-theory} As before, we suppose that  $\D_{\bullet}^{\bullet}=\{\D^{0}_{\bullet},\D^{1}_{\bullet},\dots \D^{n}_{\bullet},\dots\}$ is an $\Omega$-spectra in $\Spn(\sCat_{\ast},\Sigma).$
In general the sequence of simplicial sets 
$$\{\map_{\sCat_{\ast}}(S^{0}, \D^{0}_{\bullet}),\map_{\sCat_{\ast}}(S^{0}, \D^{1}_{\bullet}),\dots\}$$ 
does not form a spactra in $\Spn(\sSet_{\ast},\Sigma).$
This sequence is not an element of $\Spn(\sSet_{\ast},\Sigma)$ but it has the property of an $\Omega$-spectra, i.e., $$ \map_{\sCat_{\ast}}(S^{0}, \D^{n}_{\bullet})\simeq \Omega\map_{\sCat_{\ast}}(S^{0}, \D^{n+1}_{\bullet}),~\forall n\in\mathbb{N}.$$
In some sense, an $\Omega$-spectra $\D^{\bullet}_{\bullet}$ has the property that $\D^{n+1}_{\bullet}$ is a model for the Waldhausen $\mathcal{S}_{\bullet}$-construction for $\D^{n}_{\bullet}$, i.e., 
$\D^{n+1}_{\bullet}$ is a model for $\mathcal{S}_{\bullet}\D^{n}_{\bullet}$. Or equivalently, 
$\D^{n+1}_{\bullet}$ is a \textit{categorical delooping} for $\D^{n}_{\bullet}$.
%$\pi_{i}\map(S^{0}, \D^{n}_{\bullet})$ est isomorphe à $\pi_{i+1}\map(S^{0}, \D^{n+1}_{\bullet})$ pour tout $i$ et tout $n$.
 \begin{definition}
 Let $\C_{\bullet}$ be a simplicial category (i.e., an objet of $\sCat_{\ast}$) which is a weak complete Waldhausen category \ref{catwald}, we define the algebraic $\K$-theory of $\C_{\bullet}$
 by the simplicial set $\map_{\sCat_{\ast}}(S^{0},\C_{\bullet})$, so $\K_{i}(\C_{\bullet})=\pi_{i}\map_{\sCat_{\ast}}(S^{0},\C_{\bullet}).$
 \end{definition}
 
%%%%%%%%%%%%%%%%%%%%%%%%%%%%%%%%%%%%%
%%%%%%%%%%%%%%%%%%%%%%%%%%%%%%%%%%%%%

\appendix\label{A}
\section{}
\begin{lemma}\label{petitscat0}
Let
$$\xymatrix{
\mathcal{C} \ar@<1ex>[r]^-{G} & \mathcal{D}\ar@<1ex>[l]^-{F}
}$$
be an adjunction, such that $F$ commutes with directed  colimits. If $C\in\mathcal{C}$ is a small object for a certain ordinal $\beta$, then $G(C)$ is small in $\mathcal{D}$.
\end{lemma}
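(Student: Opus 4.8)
The statement is a general categorical fact: given an adjunction $F \dashv G$ (with $G$ the right adjoint, $F$ the left adjoint commuting with directed colimits), smallness of $C$ in $\mathcal{C}$ transfers to smallness of $G(C)$ in $\mathcal{D}$. The plan is to unwind the definition of smallness and use the adjunction isomorphism $\homs_{\mathcal{D}}(G(C),-) \cong \homs_{\mathcal{C}}(C,F(-))$ together with the hypothesis that $F$ preserves directed colimits. Concretely, suppose $C$ is $\beta$-small relative to some subcategory (say all of $\mathcal{D}$, or the relevant subcategory of morphisms one cares about — the argument is the same). Let $\lambda$ be a $\beta$-filtered ordinal and let $X_{\bullet}\colon \lambda \to \mathcal{D}$ be a $\lambda$-sequence. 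I want to show the canonical map
$$\colim_{\alpha<\lambda}\homs_{\mathcal{D}}(G(C), X_{\alpha}) \longrightarrow \homs_{\mathcal{D}}\bigl(G(C), \colim_{\alpha<\lambda}X_{\alpha}\bigr)$$
is an isomorphism.

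First I would apply the adjunction isomorphism on both sides: the left-hand side becomes $\colim_{\alpha<\lambda}\homs_{\mathcal{C}}(C, F(X_{\alpha}))$ and the right-hand side becomes $\homs_{\mathcal{C}}(C, F(\colim_{\alpha<\lambda}X_{\alpha}))$; naturality of the adjunction isomorphism guarantees that the canonical comparison map is carried to the canonical comparison map under these identifications. Next I would use the hypothesis that $F$ commutes with directed colimits to rewrite $F(\colim_{\alpha<\lambda}X_{\alpha})$ as $\colim_{\alpha<\lambda}F(X_{\alpha})$, so the right-hand side becomes $\homs_{\mathcal{C}}(C, \colim_{\alpha<\lambda}F(X_{\alpha}))$. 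Now the comparison map is precisely
$$\colim_{\alpha<\lambda}\homs_{\mathcal{C}}(C, F(X_{\alpha})) \longrightarrow \homs_{\mathcal{C}}\bigl(C, \colim_{\alpha<\lambda}F(X_{\alpha})\bigr),$$
which is an isomorphism because $C$ is small in $\mathcal{C}$ for the ordinal $\beta$ (the sequence $\alpha \mapsto F(X_{\alpha})$ is a $\lambda$-sequence in $\mathcal{C}$). Composing the two adjunction isomorphisms with this one gives the desired isomorphism, completing the argument.

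The only genuinely delicate point — and the one I would be most careful about — is bookkeeping around \emph{relative} smallness: if $C$ is small relative to a specified subcategory $\mathcal{I}$ of morphisms in $\mathcal{C}$, then the conclusion is that $G(C)$ is small relative to $F^{-1}(\mathcal{I})$ (or more precisely relative to any class of morphisms in $\mathcal{D}$ whose image under $F$ lands in $\mathcal{I}$-cell), and in the applications (Lemmas \ref{petitscat}, \ref{smallscat}) one must check the relevant images do land there. For the bare statement as written — $C$ small (relative to all of $\mathcal{C}$) implies $G(C)$ small — no such subtlety arises, and the proof is the three-line chain of isomorphisms above. I would also note in passing that the choice of ordinal is unchanged: $G(C)$ is $\beta$-small with the same $\beta$, since the $\lambda$-sequence $\alpha\mapsto F(X_\alpha)$ is indexed by the same $\lambda$.
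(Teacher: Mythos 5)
Your proof is correct and follows essentially the same route as the paper's: apply the adjunction isomorphism, use that $F$ preserves directed colimits, and then invoke $\beta$-smallness of $C$, giving the same three-step chain of natural isomorphisms. Your additional remarks on naturality and on relative smallness are careful touches the paper elides but do not change the argument.
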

\begin{proof}
Suppose that we have a directed  colimit $\colim_{\alpha<\beta}T_{\alpha}$ in $\mathcal{D}$. We have the following sequence of isomorphisms
  \begin{eqnarray*}
  \homs_{\mathcal{D}}(G(C),\colim_{\alpha<\beta}T_{\alpha})&\simeq&\homs_{\mathcal{C}}(C,~F\colim_{\alpha<\beta}T_{\alpha})  \\
&\simeq& \homs_{\mathcal{C}}(C,\colim_{\alpha<\beta} ~F(T_{\alpha}))  \\
&\simeq& \colim_{\alpha<\beta}\homs_{\mathcal{C}}(C,~F(T_{\alpha})) \\
&\simeq&  \colim_{\alpha<\beta}\homs_{\mathcal{D}}(G(C),T_{\alpha})
\end{eqnarray*}
The sequence ismomorphism, is a consequence of the fact that  $F$ commutss with directed  colimits. The rest of isomorphisms are  obvious because  $C$ is $\beta$-small by definition.
\end{proof}

\begin{lemma}
In Moerdijk's model category on bisimplicial sets, domains and codomains of  generating (acyclic) cofibrations   $\mathrm{I}$ ($\mathrm{J}$) are small.
\end{lemma}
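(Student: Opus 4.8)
The plan is to reduce the claim to the smallness of $\partial\Delta^n$, $\Delta^n$ and $\Lambda^n_i$ in $\sSet$, combined with Lemma \ref{petitscat0}. Recall that Moerdijk's model structure on $\sSet^{2}$ is transferred along the adjunction
$$\xymatrix{
\sSet\ar@<1ex>[r]^-{d_{\ast}} & \sSet^{2}\ar@<1ex>[l]^-{\diag}
}$$
so that the generating cofibrations $\mathrm{I}$ are the maps $d_{\ast}(\partial\Delta^{n}\to\Delta^{n})$ for $n\geq 0$, and the generating acyclic cofibrations $\mathrm{J}$ are the maps $d_{\ast}(\Lambda^{n}_{i}\to\Delta^{n})$ for $n\geq 0$, $0\leq i\leq n$. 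Hence the domains and codomains that occur are exactly the bisimplicial sets $d_{\ast}\partial\Delta^{n}$, $d_{\ast}\Delta^{n}=\Delta^{n,n}$ and $d_{\ast}\Lambda^{n}_{i}$, and it suffices to show that each of these is small in $\sSet^{2}$.

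First I would observe that the right adjoint $\diag\colon\sSet^{2}\to\sSet$ commutes with all colimits, in particular with directed ones: colimits in the functor category $\sSet^{2}=[\Delta^{op}\times\Delta^{op},\mathbf{Set}]$ are computed objectwise, and $\diag$ is precomposition with the diagonal functor $\Delta^{op}\to\Delta^{op}\times\Delta^{op}$, so $\diag(\colim_{\alpha}X^{\alpha})_{n}=(\colim_{\alpha}X^{\alpha})_{n,n}=\colim_{\alpha}(X^{\alpha})_{n,n}=(\colim_{\alpha}\diag X^{\alpha})_{n}$. Next, the simplicial sets $\partial\Delta^{n}$, $\Delta^{n}$ and $\Lambda^{n}_{i}$ are small in $\sSet$ for a suitable cardinal; indeed every simplicial set is small (cf. \cite{Hovey}, Lemma 3.1.1), so this applies in particular to these finite ones. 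Applying Lemma \ref{petitscat0} to the adjunction $d_{\ast}\dashv\diag$, whose right adjoint $\diag$ commutes with directed colimits by the preceding remark, we conclude that $d_{\ast}\partial\Delta^{n}$, $d_{\ast}\Delta^{n}$ and $d_{\ast}\Lambda^{n}_{i}$ are all small in $\sSet^{2}$, which is precisely the assertion.

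There is no substantial obstacle here; the only point needing a moment's care is the verification that $\diag$ preserves directed colimits, and this is immediate from the levelwise computation of colimits in diagram categories. One could, if desired, keep track of a common cardinal $\lambda$ witnessing $\lambda$-smallness of all the objects involved, but since we only need smallness relative to arbitrary morphisms (equivalently, relative to $\mathrm{I}$-cells), no such bookkeeping is required.
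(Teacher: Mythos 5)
Your proposal is correct and follows essentially the same route as the paper: identify the (co)domains as $d_{\ast}$ of the (co)domains of the generating (acyclic) cofibrations of $\sSet$, note that $\diag$ preserves directed colimits and that simplicial sets are small, and invoke Lemma \ref{petitscat0}. The only cosmetic difference is that the paper justifies that $\diag$ preserves colimits by recalling it has a further right adjoint $d^{\ast}$, whereas you verify it directly from the levelwise computation of colimits in diagram categories; both are valid.
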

\begin{proof}
The generating (acyclic) cofibration in $\sSet^{2}$ are the image of generating (acyclic) cofibration via the fonctor $d_{\ast}$ of generating (acyclic) cofibration $\sSet$.
$$\xymatrix{
\sSet \ar@<1ex>[r]^-{d_{\ast}} & \sSet^{2}\ar@<1ex>[l]^-{\diag}
}$$
We recall that  $\diag$ admits also a right adjoint denoted by $d^{\ast}$, so $\diag$ commutes with colimits. Moreover objects in $\sSet$ are small for a certain ordinal.
Let $A$ be a (co)domain of a generating acyclic cofibration in $\sSet$, then  $d_{\ast}A$ is small in $\sSet^{2}$ by the lemma \ref{petitscat0}.
\end{proof}
\begin{lemma}\label{smallscat}
The functor $\diag\N\iso: \sCat\rightarrow \sSet$ commutes with directed  colimits.
\end{lemma}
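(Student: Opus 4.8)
The plan is to show that $\diag\N\iso$ commutes with directed colimits by factoring it as a composite of three functors and checking each piece separately. First I would observe that $\diag\N\iso = \diag \circ \N \circ \iso$, where $\iso : \Cat \to \Cat$ picks out the underlying groupoid (applied levelwise to give $\sCat \to \sCat$), $\N$ is the levelwise nerve $\sCat \to \sSet^2$, and $\diag : \sSet^2 \to \sSet$ is the diagonal. The diagonal functor $\diag$ is a left adjoint (its right adjoint is the functor $d^{\ast}$ mentioned in the appendix above), hence commutes with all colimits, in particular directed ones. So the content is in the first two functors.

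Next I would check that the levelwise nerve $\N : \sCat \to \sSet^2$ commutes with directed colimits. Since colimits in both $\sCat$ and $\sSet^2$ are computed levelwise (degreewise in the simplicial direction), it suffices to check that $\N : \Cat \to \sSet$ commutes with directed colimits. This is a standard fact: for a fixed $[n] \in \Delta$, the set $(\N \mathcal{C})_n = \Hom_{\Cat}([n], \mathcal{C})$ of strings of $n$ composable morphisms, and $[n]$ (viewed as a category) is a finitely presentable object of $\Cat$ — it has finitely many objects, finitely many generating morphisms, and finitely many relations — so $\Hom_{\Cat}([n], -)$ commutes with directed colimits. Taking the colimit degreewise then shows $\N$ does too.

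Finally I would treat $\iso : \Cat \to \Cat$. Again working levelwise it is enough to show the underlying-groupoid functor on $\Cat$ commutes with directed colimits. This follows because $\iso$ is right adjoint to the inclusion $\mathbf{Grpd} \hookrightarrow \Cat$? — no, that is the wrong variance; rather, $\iso$ is \emph{right} adjoint to the inclusion of groupoids into categories, so one cannot conclude directly. Instead I would argue concretely: a directed colimit of categories is computed by taking the directed colimit of object-sets and of morphism-sets, and an arrow in the colimit is invertible precisely when it, or rather a representative of it, is already invertible at some stage of the diagram (by directedness, one can find a common stage carrying both the arrow and a candidate inverse, and the equation witnessing inverseness is detected at a further stage). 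Hence $\iso$ of the colimit agrees with the colimit of the $\iso$'s. Composing the three observations gives the claim.

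The main obstacle I anticipate is the $\iso$ step: one must be slightly careful that "being an isomorphism" is a finitary condition preserved under passage to directed colimits, i.e. that invertibility of a morphism in $\colim_\alpha \mathcal{C}_\alpha$ is witnessed at some finite stage $\mathcal{C}_\alpha$. This is true but requires the explicit description of directed colimits in $\Cat$ and a small diagram chase using directedness; everything else ($\diag$ a left adjoint, $\N$ preserving filtered colimits because $[n]$ is finitely presentable, colimits in functor categories computed pointwise) is routine and can be cited or dispatched in a line.
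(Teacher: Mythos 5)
Your proof is correct but follows a genuinely different route from the paper. The paper runs a pure adjunction argument: $(\diag\N\iso\,\C_{\bullet})_{n}$ is $\homs_{\sSet}(\Delta^{n},\diag\N\iso\,\C_{\bullet})\cong\homs_{\sCat}(\pi_{\bullet}d_{\ast}\Delta^{n},\C_{\bullet})$, and after unwinding $\pi_{\bullet}d_{\ast}\Delta^{n}$ degreewise the key input is that the single groupoid $\pi\Delta^{n}$ is a finitely presentable object of $\Cat$, so $\homs_{\Cat}(\pi\Delta^{n},-)$ commutes with directed colimits; the whole statement then follows by passing back along the adjunctions. You instead factor $\diag\N\iso$ into $\diag\circ\N\circ\iso$ and verify each factor separately: $\diag$ as a left adjoint, $\N$ via finite presentability of the poset $[n]$ in $\Cat$, and $\iso$ via the observation that invertibility of an arrow in a directed colimit of categories is detected at a finite stage. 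Both arguments hinge on the same underlying finiteness phenomenon (hom out of a finite categorical shape commuting with filtered colimits), but you need two separate lemmas where the paper has packaged everything into the smallness of $\pi\Delta^{n}$. The paper's version is shorter on the page; yours is more modular and makes the role of $\iso$ explicit, which the paper's bundled argument somewhat hides. You correctly identify the $\iso$ step as the place needing care, and your directedness argument (inverse and both witnessing equations visible at some common stage) is exactly what is needed. One small caution: make sure you state the factorization as $\diag\circ\N\circ\iso$ with all three applied levelwise before passing to $\sSet$; the paper's $\N\iso$ is a single levelwise functor $\sCat\to\sSet^{2}$, and $\diag$ is then $\sSet^{2}\to\sSet$, so your decomposition matches once levels are handled uniformly.
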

\begin{proof}
Let  $\colim_{\lambda}\C^{\lambda}_{\bullet}$ be a directed  colimit in $\sCat$, for a certain ordinal $\lambda$.
\begin{eqnarray*}
\Big(\diag\N\iso (\colim_{\lambda}\C^{\lambda}_{\bullet})\Big)_{n}& = & \homs_{\sSet}(\Delta^{n}, \diag\N\iso (\colim_{\lambda}\C^{\lambda}_{\bullet})) \\ 
 & = & \homs_{\sCat}(\pi_{\bullet}~d_{\ast}(\Delta^{n}),\colim_{\lambda}\C^{\lambda}_{\bullet}) \\ 
  & = & \homs_{\sCat}(\pi_{\bullet}(\sqcup_{\Delta^{n}}\Delta^{n}),\colim_{\lambda}\C^{\lambda}_{\bullet}) \\ 
   & = & \homs_{\sCat}(\sqcup_{\Delta^{n}}\pi(\Delta^{n}),\colim_{\lambda}\C^{\lambda}_{\bullet}) \\ 
&=&\homs_{\Cat}(\pi\Delta^{n},\colim_{\lambda}\C^{\lambda}_{n})\\
&=&\colim_{\lambda}\homs_{\Cat}(\pi\Delta^{n},\C^{\lambda}_{n})\\
&=&\colim_{\lambda}\homs_{\sCat}(\pi_{\bullet}~ d_{\ast}\Delta^{n},\C^{\lambda}_{\bullet})\\
&=&\colim_{\lambda}\homs_{\sSet}(\Delta^{n},\diag\N\iso\C^{\lambda}_{\bullet})\\
&=& \colim_{\lambda}\Big( \diag\N\iso\C^{\lambda}_{\bullet}\Big)_{n}.
\end{eqnarray*}

All the isomorphisms are consequence of adjunctions. The fifth isomorphisms is due to the fact that $\pi\Delta^{n}$ is a small object in $\Cat$. 
\end{proof}

\begin{lemma}\label{petitscat}
The domains and codomains of generating (acyclic) cofibrations in $\sCat$ are small.
\end{lemma}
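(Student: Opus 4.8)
The plan is to reduce the statement to the two facts already established in this appendix, Lemma \ref{petitscat0} and Lemma \ref{smallscat}. By Theorem \ref{structure}(2), every domain and codomain of a generating (acyclic) cofibration of $\sCat$ is of the form $\pi_\bullet d_\ast K$, where $K$ runs over the domains and codomains of the standard generating (acyclic) cofibrations of $\sSet$; thus $K$ is one of $\partial\Delta^n,\Delta^n,\Lambda^n_i$, in particular a finite simplicial set, hence a small object of $\sSet$ for some ordinal (\cite{Hovey}, Lemma 3.1.1). Taking the maximum of the finitely many ordinals involved, we may fix a single ordinal $\beta$ for which all of these $K$ are $\beta$-small.

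Next I would use the adjunction $\pi_\bullet d_\ast \dashv \diag\N\iso$ between $\sSet$ and $\sCat$, obtained by composing $d_\ast\dashv\diag$ with $\pi_\bullet\dashv\N\iso$. Its right adjoint $\diag\N\iso$ commutes with directed colimits, which is precisely the content of Lemma \ref{smallscat}. Hence the hypotheses of Lemma \ref{petitscat0} are met with $\mathcal{C}=\sSet$, $\mathcal{D}=\sCat$, $G=\pi_\bullet d_\ast$, $F=\diag\N\iso$ and $C=K$, and that lemma yields that $\pi_\bullet d_\ast K$ is small in $\sCat$. As $K$ ranges over all the relevant domains and codomains, this proves the lemma; note that one even obtains absolute smallness, which is stronger than the ``small relative to $\pi_\bullet d_\ast(\mathrm{I})$'' needed for hypothesis (1) of Lemma \ref{lem1}. (The same argument, with $Dec$ and $\overline{W}$ in place of $d_\ast$ and $\diag$ and invoking Corollary \ref{W-modele} instead of Lemma \ref{smallscat}, handles the generators of the $\overline{W}$-model structure of Theorem \ref{newstructure}.)

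I do not expect a genuine obstacle here: all of the real content sits in Lemma \ref{smallscat}, whose proof is the chain of adjunction isomorphisms exhibiting $\diag\N\iso$ as preserving directed colimits, the crucial input being that $\pi\Delta^n$ is a small (indeed finitely presentable) object of $\Cat$. The only things to be careful about are bookkeeping a uniform ordinal $\beta$ over the finitely many generators, and keeping in mind that $\diag$ commutes with arbitrary colimits whereas $\N\iso$ commutes only with directed ones — which is exactly why smallness (rather than a finer finiteness notion, treated separately in Section \ref{section2}) is the appropriate conclusion at this point.
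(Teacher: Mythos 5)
Your proof is correct and follows essentially the same route as the paper: the paper's own proof of this lemma is a one-line citation of Lemma \ref{petitscat0}, Lemma \ref{smallscat}, and the smallness of all objects of $\sSet$, which is exactly the reduction you carry out (with the correct identification $G=\pi_\bullet d_\ast$, $F=\diag\N\iso$ in Lemma \ref{petitscat0}). Your extra remarks — fixing a uniform ordinal $\beta$, noting absolute versus relative smallness, and the parallel argument for the $\overline{W}$-generators via Corollary \ref{W-modele} — are accurate elaborations of what the paper leaves implicit.
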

\begin{proof}
It is a consequence of  \ref{petitscat0}, \ref{smallscat} and the fact that all objects in $\sSet$ are small for a cetrain ordinal.
\end{proof}

%%%%%%%%%%%%%%%%%%%%%%%%%%%%%%%%%%
%%%%%         Path object                                        %%%%%%%%%%
%%%%%%%%%%%%%%%%%%%%%%%%%%%%%%%%%%

%%%%%%%%%%%%%%%%%%%%%%%%%%%%%%%%%%
%%%%%         proof Path object                              %%%%%%%%%%
%%%%%%%%%%%%%%%%%%%%%%%%%%%%%%%%%%

%------------------------------------------------------------------------------
% Part 3
%------------------------------------------------------------------------------

%------------------------------------------------------------------------------
% Part 4
%------------------------------------------------------------------------------

%------------------------------------------------------------------------------
% Bibliothèque 
%------------------------------------------------------------------------------
\bibliographystyle{plain} 
\bibliography{speccat}

\end{document}